\newtheorem{thm}{Theorem}
\newtheorem{lem}[thm]{Lemma}
\newtheorem{prop}[thm]{Proposition}
\newtheorem{defn}[thm]{Definition}
\newtheorem{rem}[thm]{Remark}
\numberwithin{thm}{section}
\numberwithin{equation}{section}
\newcommand{\norm}[1]{\left\Vert#1\right\Vert}
\newcommand{\abs}[1]{\left\vert#1\right\vert}
\newcommand{\la}{\langle}
\newcommand{\ra}{\rangle}
\newcommand{\Gb}{\mathbb{G}}
\newcommand{\Hi}{\mathcal{H}}
\newcommand{\M}{\mathcal{M}}
\newcommand{\m}{\textbf{mod}}
\newcommand{\n}{\mathbb{N}}
\newcommand{\om}{\omega}
\newcommand{\prt}{\widehat{\otimes}}
\begin{document}
\title{Projectivity of modules over Fourier algebras}
\author{Brian E. Forrest, Hun Hee Lee, Ebrahim Samei}

\address{Brian E. Forrest: Department of Pure Mathematics, Faculty of Mathematics, University of Waterloo,
200 University Avenue West, Waterloo, Ontario, Canada N2L 3G1}
\address{Hun Hee Lee: Department of Mathematics, Chungbuk National University, 410 Sungbong-Ro, Heungduk-Gu, Cheongju 361-763, Korea}
\address{Ebrahim Samei: Department of Mathematics and Statistics, University of Saskatchewan, 106 Wiggins Road Saskatoon, SK S7N 5E6 CANADA}
\keywords{operator space, Noncommutative $L_p$, Fourier algebra, projective module}
\thanks{2000 \it{Mathematics Subject Classification}.
\rm{Primary 43A30, 46L07; Secondary 22D25, 46L52}}

\begin{abstract}
In this paper we will study the homological properties of various natural modules associated
to the Fourier algebra of a locally compact group. In particular, we will focus on the question of 
identifying when such modules will be projective in the category of operator spaces.
We will show that projectivity often implies that the underlying group is discrete and give evidence
to show that amenability also plays an important role.
\end{abstract}

\maketitle

\section{Introduction}
Over the past thirty years there has been a rich body of work dedicated to understanding the 
cohomological properties of the various Banach algebras arising in the study of locally compact 
groups. The seminal paper in this respect is certainly Barry Johnson's memoir \cite{John} in which he introduces
the concept of amenability for a Banach algebra, and shows that for a locally compact group 
$G$ the group algebra $L^{1}(G)$ is amenable if and only if $G$ is amenable in the classical 
sense. 

While applications of cohomology to harmonic anlysis may be more celebrated, there have also been
a number of significant studies related to the homological properties of the algebras arising 
from groups. Recently, H. G. Dales and M. E. Polyakov \cite{DP04} gave a detailed study of the homological 
properties of modules over the group algebra of a locally compact group. In their work
they focused primarily on the question of whether or not certain natural left $L^1(G)$-modules are respectively 
\textit{projective}, \textit{injective}, or \textit{flat}. They were able to show, for example, that 
when viewed as the dual left-module of $L^{1}(G)$, $L^{\infty}(G)$ is projective precisely when the 
group $G$ is finite. In stark contrast, they prove that $L^{\infty}(G)$  is always injective. They also showed that the measure algebra $M(G)$
is projective precisely when $G$ is discrete, while in this case injectivity is equivalent to the group $G$ being 
amenable. 

When $G$ is abelian the classical Fourier transform identifies $L^{1}(G)$ with a commutative Banach algebra 
$A(\widehat {G})$ of continuous functions on the dual group $\widehat G$ called the Fourier algebra. The Banach space dual of $A(\widehat {G})$
which, we denote by $VN(\widehat {G})$, can be identified with $L^{\infty}(G)$ in the obvious way. Again for abelian $G$, 
the Fourier-Steiltjes transform identifies $M(G)$ with another commutative Banch algebra $B(\widehat {G})$, called the Fourier-Stieltjes algebra. 
Duality tells us that for $G$ abelian, $VN(\widehat {G})$ is projective as a left $A(\widehat {G})$-module if and only if 
$\widehat {G}$ is finite, and that $B(\widehat {G})$ 
will be a projective  left $A(\widehat {G})$-module, precisely when the dual group $\widehat {G}$ is compact.  

If $G$ is not abelian, then the classical method for defining the Fourier and Fourier-Steiltjes algebras is 
no longer available. However, in \cite{Eymard}, Pierre Eymard succeeded in defining the Fourier algebra $A(G)$ and 
the Fourier-Stieltjes algebra $B(G)$ for any locally compact group $G$. Just as was the case for an abelian group, 
$A(G)$ and $B(G)$ are both commutative Banach algebras of continuous functions on $G$. In this paper, we will follow 
the lead set by Dales and Polyakov and 
study the homological properties of various natural $A(G)$-modules when $G$ is a locally compact, not necessarily abelian group. 
However, in our case, we will focus only on questions involving projectivity. 

We close this section by noting that as the predual of a von Neumann algebra, $A(G)$ carries the additional structure of
an operator space. Ample evidence exists to suggest that in studying the cohomological properties of $A(G)$, the operator structure 
plays a crucial role when $G$ is not abelian. Indeed, it is known that the Fourier algerba $A(G)$ is amenable as a 
Banach algebra if and only if the group $G$ is a finite extension of an abelian group \cite{ForRun}. This means that the dual to 
Johnson's fundamental theorem on amenability of $L^{1}(G)$, namely that $A(G)$ is amenable as a Banach algebra if 
and only if $G$ is amenable, fails spectacularly. In contract, Ruan showed that $A(G)$ is amenable in the operator 
space category precisely when $G$ is amenable \cite{Ru}. Ruan's result can be viewed as a true analog of Johnson's Theorem 
because for group algebras the underlying operator space structure is essentially trivial and as such amenability 
in the category of Banach spaces and in the category of operator spaces is the same.

The operator space structure for $A(G)$ has also been shown to play an important role in the homology of $A(G)$. 
Recall, that a Banach algebra $\mathcal{A}$ is called \textit{biprojective} if it is projective 
 when viewed as a bimodule over itself.  For example, 
it is a well known result of Khelemskii (see \cite{Hel}) that $L^{1}(G)$ is \textit{biprojective} if and only if $G$ is compact.
Duality would suggest that the Fourier algebra would be biprojective if and only if $G$ is discrete. 
However, if $G$ is an amenable discrete group, then because biprojective Banach algebras with bounded approximate 
identities are amenable, $A(G)$ is biprojective precisely when $G$ has an abelian 
subgroup of finite index. Moreover, if $\mathbb{F}_2$ is the free group on two generators, then $A(\mathbb{F}_2)$ is not biprojective (\cite{Stei}).
In contrast, given the obvious operator space analog of the notion of biprojectivity, it can be shown that $A(G)$ is operator 
biprojective if and only if $G$ is discrete (see \cite{Ar02} and \cite{W02}). In this paper, we will make heavy use of the 
operator space structure of $A(G)$ and its natural modules to establish analogs of the projectivity results for  
all of the left $L^{1}(G)$-modules studied by Dales and Polyakov, as well as some additional natural left $A(G)$-modules.    
   
\section{Preliminaries}

Let $G$ be a locally compact group with a fixed left Haar measure $dx$. Let $L^{1}(G)$ denote 
the group algebra of $G$. Then $L^{1}(G)$ is an involutive Banach algebra under convolution.

Let $\Sigma _{G}$ be the collection of all equivalence classes of weakly continuous
unitary representations of $G$ into $B(\mathcal{H}_{\pi })$ for some Hilbert
space $\mathcal{H}_{\pi }.$ 

Every $\pi \in \Sigma _{G}$ lifts to a $^*$-representation of $L^{1}(G)$ via the formula
\[\langle \pi (f) \xi ,\eta \rangle =\int_{G} f(x) \langle \pi (x) \xi ,\eta  \rangle dx.\]

We can define a norm on $L^{1}(G)$ by 

\[\| f\| _{*}=\sup \{\| \pi (f)\| : \pi \in \Sigma_G \}.\]

The completion of $L^{1}(G)$ with respect to this nom is the group $C^*$-algebra, which we denote by 
$C^{*}(G)$.  

A function of the form 
\[
u(x)= \la \pi (x)\zeta ,\eta \ra (\zeta ,\eta \in \mathcal{H})\]
is called a coefficient functions of $\pi .$ Let 
\[
B(G)=\{u(x)= \langle\pi (x)\xi ,\eta \rangle: \pi \in \Sigma_{G},\xi ,\eta \in \mathcal{H}_{\pi }\}. 
\]
Then $B(G)$ is a commutative algebra of continuous functions on $G$ with
respect to pointwise operations. It is well known that $B(G)$ can be
identified with the dual of the group $C^{*}$-algebra $C^{*}(G)$. In case $G$ is abelian, $C^*(G)$ is the image of
$C_0(\widehat {G})$ under the generalized Fourier transform. 

 With
respect to the dual norm, $B(G)$ is a commutative Banach algebra called the
Fourier-Stieltjes algebra of $G.$

The left regular representation $\lambda $ acts on $L^{2}(G)$ as follows: 
\[
(\lambda (y)(f))(x) := f(y^{-1}x) 
\]
for each $x,y\in G,$ $f\in L^{2}(G).$ We denote by $VN(G)$ the closure of
$\text{span}\{\lambda (x): x\in G\}$ in the weak operator topology of $B(L^{2}(G)).$
$VN(G)$ is a von Neumann algebra called the group von Neumann
algebra of $G$. Its predual is $A(G),$ the algebra of continuous functions
that are coefficient functions of the left regular representation $\lambda $
of $G$. $A(G)$, the Fourier algebra of $G,$ is a closed ideal in $B(G)$ and
the norm induced on $A(G)$ as the predual of $VN(G)$ agrees with the norm it
inherits from $B(G)$ \cite{Eymard}.

It will be important for our purposes to note that while $L^{1}(G)$ always has a bounded 
approximate identity (b.a.i), Leptin \cite{Lep} showed that $A(G)$ has a b.a.i. if and only if
$G$ is amenable. 

Let $C^*_r(G)$ denoted the norm closure of $\{\lambda (f): f\in L^{1}(G)\}$ in the space 
$B(L^{2}(G))$. $C^*_r(G)$ is called the reduced $C^{*}$-algebra of $G$. When $G$ is amenable, 
$C^*(G)$ and $C^{*}_r(G)$ agree, but if $G$ is not amenable, then $C^{*}_r(G)$ is a proper quotient of 
$C^*(G)$.

We let $C^*_\delta (G)$ denote the $C^*$-algebra in $B(L^{2}(G))$ generated by 
$\{\lambda (x): x\in G\}$. If $G$ is discrete the clearly $C^*_\delta (G)=C^*_r(G)$. 
However for non-discrete groups these algebras are different. When $G$ is abelian,  
$C^*_\delta (G)$ is the image of $AP(\widehat {G})$, the algebra of almost periodic functions
on $\widehat {G}$. 

We will need to know that if $H$ is an open subgroup of $G$, then there is a natural 
completely isometric injection $i:C^*(H)\rightarrow C^*(G)$ (see \cite[Section 5]{BKLS}). 

It is clear that $VN(G)$ is a commutative $A(G)$-bimodule under the dual action 
\[UCB(\widehat {G})=\bar {\overline{\text{span}}\{u\cdot T : u\in A(G),T\in VN(G)\}}\subseteq VN(G).\]
Then $UCB(\widehat {G})$ is called the space of \textit{uniformly continuous functionals on} 
$A(G)$.
It is clear that $UCB(\widehat {G})$ is an $A(G)$-submodule of $VN(G)$. 

We will now briefly remind the reader about the basic properties of operators spaces.
We refer the reader to \cite{ER00} for further details concerning the notions presented below. 

Let $\mathcal{H}$ be a Hilbert space. Then there is a natural identification between the space
$M_n(B(\mathcal{H}))$ of $n\times n$ matrices with entries in $B(\mathcal{H})$ and the space 
$B(\mathcal{H}^n)$. This allows us to define a sequence of norms $\{\| \cdot \| _n\}$ on the spaces
$\{M_n(B(\mathcal{H}))\}$. If $V$ is any subspace of $B(\mathcal{H})$, then the spaces 
$M_n(V)$ also inherit the above norm. A subspace $V\subseteq B(\mathcal{H})$ together with the family 
$\{\| \cdot \| _n\}$ of norms on $\{M_n(V)\}$ is called a \textit{concrete operator space}. This 
leads us to the following abstract definition of an operator space: 

\begin{defn}
An operator space is a vector space $V$ together with a family $\{\norm{\cdot}_{n}\}$ of Banach space norms on $M_{n}(V)$ such that for each
$A\in M_{n}(V),B\in M_{m}(V)$ and $[a_{ij}],[b_{ij}]\in M_{n}(\mathbb{C})$

\[
\begin{array}{ll}
i) & \parallel \left[ 
\begin{array}{ll}
A & 0 \\ 
0 & B
\end{array}
\right] \parallel _{n+m}=\max \{\parallel A\parallel _{n},\parallel
B\parallel _{m}\} \\ 
& \\
ii) & \parallel [a_{ij}]A[b_{ij}]\parallel _{n}\leq \parallel
[a_{ij}]\parallel \parallel A\parallel _{n}\parallel [b_{ij}]\parallel _{n}%
\end{array}
\]

Let $V,W$ be operator space, $\varphi :V\rightarrow W$ be linear. Then
\[
\parallel \varphi \parallel _{cb}=\sup_n \{\parallel \varphi _{n}\parallel \}
\]
where $\varphi _{n}:M_{n}(V)\rightarrow M_{n}(W)$ is given by 
\[
\varphi _{n}([v_{ij}])=[\varphi (v_{ij})].
\]

We say that $\varphi $ is completely bounded if $\parallel \varphi \parallel_{cb}<\infty ;$
is completely contractive if $\parallel \varphi \parallel_{cb}\leq 1$ and is a complete isometry if each $\varphi _{n}$ is an isometry.

Given two operator spaces $V$ and $W$, we let $CB(V,W)$ denote the space of all completely 
bounded maps from $V$ to $W$. Then $CB(V,W)$ becomes a Banach space with respect to the norm 
$\| \cdot \| _{cb}$ and is in fact an operator space via the identification $M_n(CB(V,W))\cong 
CB(V,M_n(W))$.  
\end{defn}

It is well-known that every Banach space can be given an operator 
space structure, though not necessarily in a unique way. It is also 
clear that any subspace of an operator space is also an operator space with respect to the inherited norms.
Moreover, for duals and preduals of operator spaces, there are canonical operator space structures.
As such the predual of a von Neumann algebra
and the dual of a $C^*$-algebras respectively, the Fourier and Fourier-Stieltjes algebras 
inherit natural operator space structures.  

Given two Banach spaces $V$ and $W$, there are many ways to define a norm on the 
algebraic tensor product $V\otimes W$. Distinguished amongst such norms is the \textit{Banach space 
projective tensor product norm} which we denote by $V\otimes ^\gamma W$. A fundamental property of 
the projective tensor product is that there is a natural isometry between $(V\otimes ^\gamma W)^*$ and 
$B(V,W^*)$. Given two operator spaces $V$ and $W$, there is an operator space analog of the projective
tensor product norm which we denote by $V\widehat{\otimes}\,W$. In this case, we have a natural 
complete isometry between $(V\widehat{\otimes}\,W)^*$ and $CB(V,W^*)$. 

The operator analog of the injective tensor product will be denoted by $V\otimes_{\min}W$. 

\begin{defn}

A Banach algebra $\mathcal{A}$ that is also an operator space is called a 
\textit{completely contractive Banach algebra} if the multiplication map 

\[m:\mathcal{A}\widehat{\otimes}\,\mathcal{A}\rightarrow \mathcal{A}\]

with 
\[m(u\otimes v)=uv,\]
is completely contractive. In particular, both $B(G)$ and $A(G)$ are 
completely contractive Banach algebras (see \cite{Eymard}).  

Let $\mathcal{A}$ be a completely contractive Banach algebra. An operator space $X$ is called a 
\textit{completely bounded left $\mathcal{A}$-module}, if $X$ is a left $\mathcal{A}$-module and if 
\[\pi _{X} :\mathcal{A}\widehat{\otimes}\, X\rightarrow X\]

with 
\[\pi _{X}(u\otimes x)=u\cdot x,\]
is completely bounded. 

We say that $X$ is \textit{essential} if $\mathcal{A}\cdot X$ is dense in $X$. 

We will let the collection of all completely bounded left $\mathcal{A}$-modules be denoted 
by $\mathcal{A}$-\textbf{mod}. 

If $X,Y\in \mathcal{A}$-\textbf{mod}, then we let $_{\mathcal{A}}CB(X,Y)$ denote the space of all 
completely bounded left $\mathcal{A}$-module maps from $X$ to $Y$. 

We can define a \textit{completely bounded right $\mathcal{A}$-module}
and a \textit{completely bounded $\mathcal{A}$-bimodule} analogously. The collection of all such modules
 will be denoted by \textbf{mod}-$\mathcal{A}$ and $\mathcal{A}$-\textbf{mod}-$\cdot \mathcal{A}$ respectively.

\end{defn}

In general, if $\mathcal{A}$ is a completely contractive Banach algebra, then its dual 
space $\mathcal{A}^*$ is a completely bounded left $\mathcal{A}$-module via the action 
\[(u\cdot T)(v)=T(vu)\]
for every $u,v\in \mathcal{A}$ and $T\in \mathcal{A}^*$. Moreover, every closed left 
$\mathcal{A}$-submodule $Y$ of $\mathcal{A}^*$ is also a completely bounded left $\mathcal{A}$-module.

If $X$ is a completely bounded left $\mathcal{A}$-module and $Y$ is an operator space, then 
$X\widehat{\otimes}\, Y$ becomes a completely bounded left $\mathcal{A}$-module via
\[u\cdot (x\otimes y)=ux\otimes y.\]
Similarly, if $Y$ is a completely bounded right $\mathcal{A}$-module and $X$ is an operator space, then 
$X\widehat{\otimes}\, Y$ becomes a completely bounded right $\mathcal{A}$-module via
\[(x\otimes y)\cdot u=x\otimes yu.\]

If $X$ and $Y$ are completely bounded right and left $\mathcal{A}$-modules, respectively, then we define 
\[X\widehat{\otimes}\,_{A}Y=(X\widehat{\otimes}\,Y)/N\]
where $N$ is the closed subspace of $X\widehat{\otimes}\,Y$ spanned by elements of the form
\[xu\otimes y-x\otimes uy.\]

Finally, we will need the following operator space analog of Grothendieck's classical approximation property.
The definition of the operator approximation property is due to Effros and Ruan \cite{ER00}. The definition 
we give is not the original statement of the property but rather has been established as an 
equivalent formulation in the same reference.

\begin{defn}\label{def-OAP}
We say that an operator space $X$ has the \textit{Operator Approximation Property} (OAP) if
the natural map 
\[J:V^*\widehat{\otimes}\,V\rightarrow V^*\otimes _{min}V\]
is one-to-one.

\end{defn}

\section{General theory related to projectivity}

In this section we will establish some basic properties of projective modules that we will need in our study. 

\begin{defn} 
Let $\mathcal{A}$ be a completely contractive Banach algebra. Let $X,Y$ be completely bounded 
left $\mathcal{A}$-modules. A map $T\in CB(X,Y)$ is \textit{admissible} if $ker T$ is completely 
complemented in $X$ and the range of $T$ is closed and completely complemented in $Y$. 

Let $X\in \mathcal{A}$-{\bf mod}. Then $X$ is said to be \textit{operator projective in} 
$\mathcal{A}$-{\bf mod} if whenever $E,F\in \mathcal{A}$-{\bf mod} and 
$T\in _{\mathcal{A}}CB(E,F)$ is admissible and surjective, 
then for each $S\in  _{\mathcal{A}}CB(X,F)$, there exists $R\in  _{\mathcal{A}}CB(X,E)$ such that 
$T\circ R=S$. 

\begin{equation*}
\begin{tabular}{lll}
&  & $X$ \\ 
& $\overset{R}{\swarrow }$ & $\downarrow S$ \\ 
$E$ & $\underset{T}{\longrightarrow }$ & $F$%
\end{tabular}
\end{equation*}

Equivalently, $X$ is operator projective in $\mathcal{A}$-{\bf mod} if $Z$ is any 
submodule of $F$, then every $T\in _{\mathcal{A}}CB(X,F/Z)$ lifts to 
a map in $T\in _{\mathcal{A}}CB(X,F)$

We can also define operator projectivity in ${\bf mod}$-$\mathcal{A}$ and in $\mathcal{A}$-${\bf mod}$-$\mathcal{A}$ 
in a similar manner. In this case, we say that $\mathcal{A}$ is \textit{operator biprojective} if $\mathcal{A}$ is
operator projective in $\mathcal{A}$-${\bf mod}$-$\mathcal{A}$.
\end{defn} 

Let $\mathcal{A}$ be a completely contractive Banach algebra. Then we denote the unitization $A \oplus \mathbb{C}$ of $A$ by $A_+$.
Let $X$ be a completely bounded left $A$-module with the multiplication map
	$$\pi_X : A \prt\, X\rightarrow X.$$
Then, $\pi_X$ can be extended to $\pi_{X,+} : A_+ \prt\, X \rightarrow X$ in a canonical way.
Using $\pi_{X,+}$ we have a useful characterization of operator projectivity by P. Woods (\cite[Corollary 3.20, Proposition 3.24]{W02}).

	\begin{prop}\label{prop-woods}
	Let $\mathcal{A}$ be a completely contractive Banach algebra and $X$ be a completely bounded left $A$-module.
	Then, $X$ is operator projective if and only if we have a completely bounded $A$-module map
		$$\rho : X \rightarrow A_+ \prt\, X,$$
	which is a right inverse of the extended multiplication map $\pi_{X,+}$.
	
	When $X$ is essential, $X$ is operator projective if and only if we have a completely bounded $A$-module map
		$$\rho : X \rightarrow A \prt\, X,$$
	which is a right inverse of the multiplication map $\pi_X$.
	\end{prop}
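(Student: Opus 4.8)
The plan is to prove this characterization of operator projectivity by specializing the abstract projectivity definition to a canonical test sequence and then reducing the essential case to the unital one. The key observation is that the extended multiplication map $\pi_{X,+} : A_+ \prt\, X \rightarrow X$ is itself a completely bounded $A$-module morphism that is surjective (since $A_+$ has an identity, $1 \otimes x \mapsto x$ already gives surjectivity) and admissible (its kernel is completely complemented, because the splitting $x \mapsto 1 \otimes x$ provides a completely contractive $A_+$-module, hence $A$-module, section at the Banach/operator-space level).

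\emph{First I would} verify the forward implication. Assuming $X$ is operator projective in $A$-\textbf{mod}, I would apply the lifting property of Definition 3.1 to the admissible surjection $T = \pi_{X,+} : A_+ \prt\, X \rightarrow X$ together with the morphism $S = \mathrm{id}_X : X \rightarrow X$. Projectivity then yields a map $R = \rho \in {}_{\mathcal{A}}CB(X, A_+ \prt\, X)$ with $\pi_{X,+} \circ \rho = \mathrm{id}_X$, which is exactly the desired right inverse. For the converse, I would suppose such a $\rho$ exists and show $X$ lifts along an arbitrary admissible surjection $T : E \to F$ against a given $S : X \to F$. The standard device is to build the lift from $\rho$ by composing with the free-module lifting: any $A$-module map out of $A_+ \prt\, X$ is determined by its behaviour on the generating tensors, and because $A_+ \prt\, X$ is (operator-)projective as a consequence of $A_+ \prt\, (-)$ being a free-module functor with a section, one first lifts $S \circ \pi_{X,+}$ through $T$ and then precomposes with $\rho$ to descend back to $X$. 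Concretely, I would lift the $A$-module map $S \circ \pi_{X,+}$ to some $\widetilde{R} : A_+ \prt\, X \to E$ with $T \circ \widetilde{R} = S \circ \pi_{X,+}$, and set $R = \widetilde{R} \circ \rho$; then $T \circ R = S \circ \pi_{X,+} \circ \rho = S$, as required.

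\emph{For the essential case}, I would show that when $A \cdot X$ is dense in $X$ one may replace $A_+$ by $A$. Given a right inverse $\rho : X \to A_+ \prt\, X$ for $\pi_{X,+}$, I would compose with the quotient or use the essentiality to project the $A_+$-component onto its $A$-part; the point is that on the dense submodule $A \cdot X$ the unit coordinate of $A_+$ can be absorbed, since for $x = u \cdot y$ with $u \in A$ one has $\rho(x) = u \cdot \rho(y)$ lying in $A \prt\, X$ by the module property $u \cdot (1 \otimes z) = u \otimes z$. A continuity and density argument then shows the corrected map lands in $A \prt\, X$ and still splits $\pi_X$. Conversely a splitting of $\pi_X$ extends trivially to one of $\pi_{X,+}$ via the inclusion $A \prt\, X \hookrightarrow A_+ \prt\, X$.

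\emph{The hard part will be} the complete-boundedness bookkeeping in the converse direction: one must check that every map constructed — the section $x \mapsto 1 \otimes x$, the lift $\widetilde{R}$, and the composite $\widetilde{R} \circ \rho$ — is not merely an $A$-module map but \emph{completely} bounded, and that the complementation needed to invoke admissibility is completely isometric rather than just isometric. In particular, establishing that $A_+ \prt\, X$ behaves as a projective object requires knowing that the functor $A_+ \prt\, (-)$ sends the identity lifting problem to a genuinely completely complemented situation, which rests on the completely contractive section $x \mapsto 1 \otimes x$ being a morphism of $A_+$-modules. Since this is precisely the content cited from Woods \cite[Corollary 3.20, Proposition 3.24]{W02}, I expect the real labour to be verifying that the operator-space projective tensor product interacts correctly with the module actions at every matrix level, rather than any conceptual difficulty.
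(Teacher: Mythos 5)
The paper offers no proof of this proposition: it is quoted verbatim from Woods \cite[Corollary 3.20, Proposition 3.24]{W02}, so there is no in-paper argument to compare against. Your proposal reconstructs what is essentially the standard proof, and its architecture is correct: the forward direction by applying the lifting property to the admissible surjection $\pi_{X,+}$ against $S=\mathrm{id}_X$; the converse by lifting $S\circ\pi_{X,+}$ through $T$ and precomposing with $\rho$; the essential case by observing that $\rho$ maps the dense subspace $\mathrm{span}(A\cdot X)$ into $A\,\widehat{\otimes}\,X$ and invoking continuity, with the reverse implication following from the complete complementation of $A\,\widehat{\otimes}\,X$ in $A_+\widehat{\otimes}\,X$.

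Two corrections are needed. First, you assert twice that $x\mapsto 1\otimes x$ is an $A_+$-module (hence $A$-module) section of $\pi_{X,+}$. It is not: $a\cdot(1\otimes x)=a\otimes x$, whereas $1\otimes(a\cdot x)$ is in general a different element of $A_+\widehat{\otimes}\,X$. (If it were a module section, the forward direction of the proposition would be a tautology.) Fortunately nothing you do requires this: the paper's definition of admissibility only asks that $\ker\pi_{X,+}$ be completely complemented as an operator subspace, and for that the completely contractive operator-space section $x\mapsto 1\otimes x$ suffices. Your closing paragraph, which identifies the module property of this section as the crux of the matter, should be discarded. Second, the step you wave at as a ``standard device'' --- lifting $S\circ\pi_{X,+}$ through the admissible surjection $T:E\to F$ --- is the only place where real content lives, and it is where the admissibility of $T$ is actually consumed. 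Concretely: admissibility furnishes a completely bounded (non-module) section $\sigma:F\to E$ of $T$; one then sets $\widetilde{R}=\pi_{E,+}\circ\bigl(\mathrm{id}_{A_+}\otimes(\sigma\circ S)\bigr)$, i.e.\ $\widetilde{R}(a\otimes x)=a\cdot\sigma(S(x))$ after extending the module action of $E$ to $A_+$. This is completely bounded as a composite of completely bounded maps, is an $A$-module map by construction, and satisfies $T\widetilde{R}(a\otimes x)=a\cdot S(x)=S(\pi_{X,+}(a\otimes x))$ because $T$ and $S$ are module maps and $T\sigma=\mathrm{id}_F$. With $R=\widetilde{R}\circ\rho$ the converse is complete; the ``matrix-level bookkeeping'' you anticipate is automatic from the functoriality of $\widehat{\otimes}$ and needs no separate verification.
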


Let $A$ be a completely contractive Banach algebra, and let $X$ be a completely 
bounded left $A$-module. Let $\pi_A : A\widehat{\otimes}_A X \to X$ be the completely 
bounded left $A$-module mapping specified by 
$$\pi_A(a\otimes x)=ax \ \ (a\in A, x\in X).$$
In general, $\pi_A$ needs not be a complete isomorphism. The following proposition, which 
will play an important role in our analysis, 
gives us sufficient conditions for which $\pi_A$ is a complete isomorphism.

\begin{prop}
Let $A$ be a completely contractive Banach algebra with a b.a.i, and let $X$ be an essential completely 
bounded left $A$-module.
Then $\pi_A : A\widehat{\otimes}_A X \to X$ is a complete isomorphism. Moreover, $\pi_A$ is a complete 
isometry if $A$ has a contractive b.a.i.  
\end{prop}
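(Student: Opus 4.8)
The plan is to realize $\pi_A$ as the map induced on the balanced tensor product by the module multiplication $\pi_X : A\widehat{\otimes}X \to X$, $\pi_X(a\otimes x)=a\cdot x$, and then to construct an explicit completely bounded two-sided inverse for it out of the bounded approximate identity. Write $q : A\widehat{\otimes}X \to A\widehat{\otimes}_A X$ for the canonical quotient map, so that $\pi_A\circ q=\pi_X$. Fix a b.a.i. $(e_\alpha)$ for $A$ with $\|e_\alpha\|\le C$. Since $q$ is completely contractive and each map $x\mapsto e_\alpha\otimes x$ from $X$ into $A\widehat{\otimes}X$ has completely bounded norm at most $\|e_\alpha\|$, the maps
\[
\psi_\alpha : X\to A\widehat{\otimes}_A X, \qquad \psi_\alpha(x)=q(e_\alpha\otimes x),
\]
satisfy $\|\psi_\alpha\|_{cb}\le C$ for every $\alpha$.

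The key point is the balancing relation defining the subspace $N$: for $a\in A$ and $y\in X$ one has $e_\alpha a\otimes y-e_\alpha\otimes ay\in N$, hence
\[
\psi_\alpha(a\cdot y)=q(e_\alpha\otimes ay)=q(e_\alpha a\otimes y)\To q(a\otimes y),
\]
since $e_\alpha a\to a$ in $A$. Thus $\psi_\alpha$ converges in norm on the dense subspace $A\cdot X$ (here is where essentiality of $X$ enters), and because the net $(\psi_\alpha)$ is uniformly completely bounded by $C$, a routine $3\varepsilon$-argument upgrades this to pointwise convergence on all of $X$ to a completely bounded map $\psi$ with $\|\psi\|_{cb}\le C$. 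It then remains to check that $\psi$ inverts $\pi_A$. On the one hand $\pi_A(\psi(a\cdot y))=\pi_A(q(a\otimes y))=\pi_X(a\otimes y)=a\cdot y$, so $\pi_A\circ\psi=\mathrm{id}_X$ on the dense set $A\cdot X$ and hence everywhere; on the other hand $\psi(\pi_A(q(a\otimes y)))=\psi(a\cdot y)=q(a\otimes y)$, so $\psi\circ\pi_A=\mathrm{id}$ on the dense span of the simple tensors, hence everywhere. This exhibits $\pi_A$ as a complete isomorphism with $\|\pi_A^{-1}\|_{cb}=\|\psi\|_{cb}\le C$.

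For the final assertion, take $C=1$, so that $\|\pi_A^{-1}\|_{cb}\le 1$. Since $\pi_A\circ q=\pi_X$ with $q$ a complete metric surjection, $\|\pi_A\|_{cb}\le\|\pi_X\|_{cb}\le 1$, the module multiplication being a complete contraction; combined with $\|\pi_A^{-1}\|_{cb}\le 1$ this forces $\pi_A$ to be a complete isometry. The main obstacle I anticipate is not any single identity but making the limiting step fully rigorous in the operator space category: one must ensure that a net of maps which is uniformly completely bounded and converges pointwise in norm on a dense subspace admits a completely bounded limit with the \emph{same} cb bound, so that the estimate $\|\pi_A^{-1}\|_{cb}\le C$, and in particular the sharp constant $1$, really survives the passage to the limit, and one must track the balancing relations in $A\widehat{\otimes}_A X$ carefully when manipulating $q(e_\alpha\otimes\,\cdot\,)$. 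A secondary point worth flagging is that the isometry statement relies on the action $\pi_X$ being a complete contraction, as is the case for the modules we consider.
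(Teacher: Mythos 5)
Your proof is correct, but it takes a genuinely different route from the paper. You construct the inverse of $\pi_A$ directly, as the pointwise norm limit of the uniformly completely bounded net $x\mapsto q(e_\alpha\otimes x)$, using the balancing relations in $A\widehat{\otimes}_A X$ together with essentiality and a $3\varepsilon$/uniform-boundedness argument; this is constructive, stays entirely on the primal side, and needs no factorization or compactness input. The paper instead dualizes: it identifies $\pi_A^*$ with the map $\Psi:X^*\to CB_A(A,X^*)$, $\Psi(x^*)(a)=x^*a$, proves injectivity of $\Psi$ via Cohen's factorization theorem (surjectivity of $\pi_A$), surjectivity of $\Psi$ via $w^*$-cluster points of $T(e_\alpha)$, and then estimates $\|\Psi_n\|$ and $\|\Psi_n^{-1}\|$ by testing against the b.a.i. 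Both arguments yield the same constant, namely the bound $C$ (resp.\ $K$) of the approximate identity, so the contractive case gives a complete isometry either way; the paper's version has the side benefit of exhibiting the duality $(A\widehat{\otimes}_A X)^*\cong CB_A(A,X^*)$, while yours produces $\pi_A^{-1}$ explicitly. The caveat you flag at the end --- that the isometry assertion needs the module action $\pi_X$ to be a complete contraction, which the paper's definition of a completely bounded module does not formally require --- is a fair one, and it applies equally to the paper's own norm computation for $\|\Psi_n\|$; for the modules actually used in the paper the actions are complete contractions, so nothing is lost.
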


\begin{proof}
Let $CB_A(A,X^*)$ denote the (Banach) space of the completely bounded right $A$-module morphisms.
$CB_A(A,X^*)$ inherits an operator space structure as a closed subspace of $CB(A,X^*)$. Now
consider the mapping $\Psi : X^* \to CB_A(A,X^*)$ defined by
	$$\Psi(x^*)(a)=x^*a \; (a\in A, x^*\in X^*).$$
It is clear that $\Psi=\pi_A^*$. Hence in order to show that $\pi_A$ is a 
complete isomorphism, it suffices to show that $\Psi$ is a complete isomorphism.
$\Psi$ is one-to-one since $\pi_A$ is onto by Cohen's factorization theorem 
\cite[Corollary 2.9.26]{D}. Moreover, since 
$A$ has a bounded approximate identity $\{e_\alpha\}_{\alpha \in I}$, then for every
$T\in CB_A(A,X^*)$, there is $x^*_T\in X^*$ such that $x^*_T$ is a $w^*$-cluster point of $\{ T(e_\alpha)\}$, 
and so,
$$T(a)=x_T^*a \;(a\in A).$$
That is $\Psi$ is onto. So it remains to show that both $\Psi$ and $\Psi^{-1}$ 
are completely bounded. 

Let $n\in \n$. For every $[x_{ij}^*]\in M_n(X^*)$, we have
\begin{eqnarray*} 
\|\Psi_n([x_{ij}^*])\|  &=& \sup \{\|[\Psi(x_{ij}^*)(a)]\| : \|a\|\leq 1 \} \\ 
& =& \sup \{\|[x_{ij}^*a]\| : \|a\|\leq 1 \}  \\ 
&= & \sup \{\|[x_{ij}^*]\otimes [a] \| : \|a\|\leq 1 \} \\ 
&\leq & \|[x_{ij}^*]\|.
\end{eqnarray*}    
Thus $\|\Psi_n \| \leq 1$. On the other hand, 
$$\sup \{\|[x_{ij}^*e_\alpha]\| : \alpha \in I \} \geq \|[x_{ij}^*]\|. \eqno{(1)}$$
To see this, first note that since $X$ is essential, for every $1\leq i,j \leq n$, 
$x_{ij}^*e_\alpha \stackrel{w^*}{\rightarrow}x_{ij}^*$ as $\alpha \to \infty$. Thus, 
for every $[x_{kl}]\in M_n(X)$, 
$$[\la x_{ij}^*e_\alpha \ , \ x_{kl}\ra ] \to [\la x_{ij}^*\ ,\ x_{kl} \ra]$$
in $M_{n^2}(\mathbb{C})$ as $\alpha \to \infty$. This implies that, for every $[x_{kl}]\in M_n(X)$
with $\|[x_{kl}]\| \leq 1$,
\begin{eqnarray*} 
\sup \{\|[x_{ij}^*e_\alpha]\| : \alpha \in I \} &\geq & 
\sup \{\|[\la x_{ij}^*e_\alpha \ ,\ x_{kl} \ra ]\| : \alpha \in I \}  \\ 
&\geq & \liminf_{\alpha \to \infty} \{\|[\la x_{ij}^*e_\alpha \ , \ x_{kl} \ra ]\| : \alpha \in I \}  \\ 
&= & \|[\la x_{ij}^* \ , \ x_{kl} \ra]\|. 
\end{eqnarray*}
Therefore  
\begin{eqnarray*} 
\sup \{\|[x_{ij}^*e_\alpha]\| : \alpha \in I \} &\geq & 
\sup \{\|[\la x_{ij}^* \ ,\ x_{kl} \ra ]\| : [x_{kl}]\in M_n(X), \|[x_{kl}]\| \leq 1 \}  \\  
&=& \|[x_{ij}^*]\|. 
\end{eqnarray*}
This proves (1). Now if we let $K=\sup \{\|e_\alpha \| : \alpha \in I \}$, it follows that
\begin{eqnarray*} 
K \|\Psi_n([x_{ij}^*])\|  &\geq & \sup \{\|\Psi(x_{ij}^*)(e_\alpha)]\| : \alpha \in I \} \\ 
&=&  \sup \{\|[x_{ij}^*e_\alpha]\| : \alpha \in I \} \\ 
&\geq&  \|[x_{ij}^*]\|.
\end{eqnarray*}
Thus $\|\Psi_n^{-1} \| \leq K$. Hence $\Psi$ is a complete isomorphisim.
Moreover, if $K=1$, then $\Psi$ is a complete isometry.
\end{proof}

By making use of the previous proposition and by appealing directly to \cite[Lemma 3.23]{W02}, we can immediately 
obtain the next theorem. 

	\begin{thm}\label{thm-essential}
	Let $A$ be a operator biprojective completely contractive Banach algebra with a b.a.i.,
	and let $X$ be an essential completely bounded left $A$-module.
	Then $X$ is operator projective in $A$-{\bf mod}.
	\end{thm}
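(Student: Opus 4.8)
The plan is to reduce everything to the essential case of Woods' characterization (Proposition~\ref{prop-woods}): it suffices to produce a completely bounded left $A$-module map $\rho : X \rightarrow A \prt X$ that is a right inverse of the multiplication map $\pi_X : A \prt X \rightarrow X$. Since $A$ carries a b.a.i.\ and $X$ is essential, the preceding proposition tells us that $\pi_A : A \prt_A X \rightarrow X$ is a complete isomorphism, so $\pi_A^{-1}$ is a completely bounded left $A$-module map. I will build $\rho$ by feeding $\pi_A^{-1}$ into a splitting that comes from operator biprojectivity of $A$.

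First, because $A$ is operator projective in $A$-\textbf{mod}-$A$, the bimodule analog of Proposition~\ref{prop-woods} supplies a completely bounded $A$-bimodule map $\rho_A : A \rightarrow A \prt A$ that is a right inverse of the multiplication $m : A \prt A \rightarrow A$, i.e.\ $m \circ \rho_A = \mathrm{id}_A$. This is precisely the content that \cite[Lemma 3.23]{W02} packages for us. Writing, for $a \in A$, a representative $\rho_A(a) = \sum_k b_k \otimes c_k$, the defining property reads $\sum_k b_k c_k = a$.

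Next I would assemble the map
$$\rho : X \xrightarrow{\;\pi_A^{-1}\;} A \prt_A X \xrightarrow{\;\rho_A \prt_A \mathrm{id}_X\;} (A \prt A) \prt_A X \xrightarrow{\;\sim\;} A \prt (A \prt_A X) \xrightarrow{\;\mathrm{id}_A \prt \pi_A\;} A \prt X,$$
using the natural associativity identification $(A \prt A) \prt_A X \cong A \prt (A \prt_A X)$. Every arrow is completely bounded: $\pi_A^{-1}$ by the preceding proposition, $\rho_A \prt_A \mathrm{id}_X$ because $\rho_A$ is completely bounded and the module projective tensor product is functorial, and $\mathrm{id}_A \prt \pi_A$ since $\pi_A$ is completely bounded. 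Moreover each arrow is a left $A$-module map for the action on the outermost $A$-factor — here one uses that $\rho_A$ is in particular a \emph{left} $A$-module map — so $\rho$ is a completely bounded left $A$-module morphism.

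Finally I would check $\pi_X \circ \rho = \mathrm{id}_X$ by tracing an elementary tensor $a \otimes_A x$: the composite $(\mathrm{id}_A \prt \pi_A)$ after the associativity identification sends it to $\sum_k b_k \otimes (c_k \cdot x)$, and $\pi_X$ then returns $\sum_k (b_k c_k)\cdot x = a \cdot x = \pi_A(a \otimes_A x)$; hence $\pi_X$ composed with the middle portion of the chain equals $\pi_A$, and precomposing with $\pi_A^{-1}$ gives $\mathrm{id}_X$. The main technical obstacle is the bookkeeping: one must verify that $\rho_A \prt_A \mathrm{id}_X$ descends to a well-defined map on the balanced quotient $A \prt_A X$ (this needs $\rho_A$ to be a \emph{right} $A$-module map so the relations $au \otimes_A x = a \otimes_A ux$ are preserved) and that the associativity identification is a complete isometry intertwining all the relevant module actions. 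These are exactly the facts isolated in \cite[Lemma 3.23]{W02}, so once they are invoked the theorem follows immediately from Proposition~\ref{prop-woods}.
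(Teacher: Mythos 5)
Your argument is correct and follows essentially the same route as the paper: the paper invokes the preceding proposition (that $\pi_A : A\,\widehat{\otimes}_A\, X \to X$ is a complete isomorphism) together with Wood's Lemma~3.23, and your chain $\rho = (\mathrm{id}_A \otimes \pi_A)\circ(\text{assoc})\circ(\rho_A \otimes_A \mathrm{id}_X)\circ \pi_A^{-1}$ is precisely the construction underlying that cited lemma, with the well-definedness and module-map checks correctly identified. No gaps.
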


It is important to note that the assumption that the completely contractive 
Banach algebra $\mathcal{A}$ has a b.a.i is crucial in the previous two statements. As we have 
previously observed, while 
$L^{1}(G)$ always has a bounded approximate identity, for $A(G)$ this is ture if and only if $G$ 
is amenable \cite{Lep}. Consequently, we will see that most of the positive results we 
obtain with respect to identifying operator projective $A(G)$-modules will require the assumption of 
amenability

Given a completely contractive Banach algebra $\mathcal{A}$, we let $\mathcal{A_{+}}$ denote the unitization 
of $\mathcal{A}$. The next proposition may be viewed as an operator analogue of Corollary 4.5 of \cite{Hel}.

	\begin{prop}\label{prop-proj}
	Let $A$ be a completely contractive Banach algebra, and let $X$ be a completely bounded left $A$-module.
	Suppose that $X$ is operator projective in $A$-{\bf mod} and $X$ or $A$ have OAP.
	Then for any non-zero element $x\in X$, there is a map $T \in {}_A CB(X, A_+)$ such that $T(x) \neq 0$.
	\end{prop}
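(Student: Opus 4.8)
The plan is to combine Woods' characterization of operator projectivity (Proposition \ref{prop-woods}) with the slice-map formulation of the OAP. Since $X$ is operator projective, Proposition \ref{prop-woods} supplies a completely bounded left $A$-module map $\rho : X \to A_+ \prt X$ that is a right inverse of the extended multiplication $\pi_{X,+}$, where $A_+ \prt X$ carries the left $A$-module structure coming from left multiplication in the first leg. The idea is to manufacture the desired functionals by composing $\rho$ with the scalar slices of the $X$-variable. Concretely, for each $\phi \in X^*$ I would set
$$T_\phi := (\mathrm{id}_{A_+} \otimes \phi) \circ \rho : X \to A_+.$$
Because every bounded functional on an operator space is completely bounded and $\prt$ is functorial, $T_\phi$ is completely bounded; and because the $A$-action on $A_+ \prt X$ is through the first leg while $\mathrm{id}_{A_+}\otimes\phi$ touches only the second, a direct check on elementary tensors shows $T_\phi(a\cdot y)=a\cdot T_\phi(y)$. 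Thus $T_\phi \in {}_A CB(X,A_+)$ for every $\phi$, and the whole problem reduces to showing that, for a fixed nonzero $x$, not all of these maps can annihilate $x$.

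First I would record that $\rho(x)\neq 0$, since $\pi_{X,+}(\rho(x))=x\neq 0$. Now suppose for contradiction that $T_\phi(x)=0$ for every $\phi\in X^*$, i.e. $(\mathrm{id}_{A_+}\otimes\phi)(\rho(x))=0$ in $A_+$ for all $\phi\in X^*$. Pairing further with an arbitrary $g\in A_+^*$ yields $\langle g\otimes\phi,\rho(x)\rangle=0$ for all $g\in A_+^*$ and $\phi\in X^*$. Since every matrix-valued complete contraction $\Phi\otimes\Psi$ used to compute the minimal tensor norm has entries built from such scalar slices, all of these slices vanish as well, and hence the image $J(\rho(x))$ vanishes in $A_+\injt X$, where $J:A_+\prt X\to A_+\injt X$ is the canonical map.

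The crux is then to deduce $\rho(x)=0$ from $J(\rho(x))=0$, and this is exactly where the OAP hypothesis enters. I would invoke the standard equivalent formulation of the OAP (see \cite{ER00}): an operator space $V$ has the OAP if and only if, for every operator space $W$, the canonical map $W\prt V\to W\injt V$ is one-to-one. If $X$ has the OAP, I apply this with $V=X$ and $W=A_+$. If instead $A$ has the OAP, then so does its unitization $A_+$ (the OAP is stable under the finite direct sum $A\oplus\mathbb{C}$), and I apply it with $V=A_+$ and $W=X$, invoking the commutativity of $\prt$ and $\injt$ to identify $X\prt A_+\to X\injt A_+$ with $A_+\prt X\to A_+\injt X$. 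In either case $J$ is injective, so $J(\rho(x))=0$ forces $\rho(x)=0$, contradicting $\rho(x)\neq 0$. Hence some $T_\phi$ satisfies $T_\phi(x)\neq 0$, and $T:=T_\phi$ is the required map.

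The main obstacle I anticipate is the OAP step: one must check carefully that vanishing of all \emph{scalar} slices genuinely forces $J(\rho(x))=0$ in the minimal tensor product (this uses that the matrix-valued slices governing the $\injt$-norm are assembled from scalar ones), and that the OAP of $A$, rather than of $A_+$, suffices, which rests on the stability of the OAP under unitization. The remaining verifications—complete boundedness and the module property of each $T_\phi$, together with $\rho(x)\neq 0$—are routine.
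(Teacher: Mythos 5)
Your proposal is correct and follows essentially the same route as the paper's proof: both use Woods' characterization to obtain the right inverse $\rho$ with $\rho(x)\neq 0$, both reduce the OAP hypothesis on $A$ to one on $A_+$ via the complete isomorphism $A\oplus_1\mathbb{C}\cong A\oplus_\infty\mathbb{C}$, and both use the injectivity of $A_+\prt X\to A_+\injt X$ (an element of the minimal tensor product being detected by elementary tensors of functionals) to produce $f\in X^*$ with $(I_{A_+}\otimes f)(\rho(x))\neq 0$ and set $T=(I_{A_+}\otimes f)\circ\rho$. The only cosmetic difference is that you argue by contradiction over the whole family of slices $T_\phi$ while the paper directly exhibits the separating pair $g\otimes f$.
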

\begin{proof}
Since $X$ is operator projective, we have a completely bounded left $A$-module map $\rho : X \rightarrow A_+ \prt X$
which is a right inverse of the module action $\pi_+ : A_+ \prt X \rightarrow X$ (Proposition \ref{prop-woods}).
Since $\pi_+ \circ \rho = id_X$ for any non-zero $x$, we have $\rho (x) \neq 0$.
We can suppose that $A$ has OAP since the proof for the case $X$ has OAP is the same.
It is clear that $A\oplus_\infty \mathbb{C}$ has OAP,
and so does $A_+ = A\oplus_1 \mathbb{C}$ since $A \oplus_1 \mathbb{C} \cong A \oplus_\infty \mathbb{C}$ completely isomorphically.
Then, the following canonical map is injective (Definition \ref{def-OAP}).
	$$A_+\prt X \rightarrow A_+ \otimes_{\min} X \hookrightarrow CB(A^*_+, X) \hookrightarrow CB(A^*_+, X^{**}) = (A^*_+ \prt X^*)^*$$
Thus, we can find $g\in A^*_+$ and $f\in X^*$ such that $(g\otimes f) (\rho (x)) \neq 0$ and consequently $(I_{A_+}\otimes f)(\rho (x)) \neq 0$.
If we set $T=(I_{A_+}\otimes f)\rho$, then $T$ is the map we desired.
\end{proof}

\section{The modules $A(G)$, $B(G)$, $A(G)^{**}$, and $UCB(\widehat{G})^*$}

In this section, we investigate the operator projectivity of $A(G)$, $B(G)$, $A(G)^{**}$, and 
$UCB(\widehat{G})^*$ in $A(G)$-{\bf mod}. We start by looking briefly at $A(G)$ itself to see 
when it is operator projective in $A(G)$-\textbf{ mod}. 

\subsection{Operator Projectivity of $A(G)$}

Dales and Polyakov have shown that $L^{1}(G)$ is always projective as a left Banach $L^{1}(G)$ module
 (see \cite[Theorem 2.4]{DP04}). In particular, $A(G)$ is operator projective in $A(G)$-{\bf mod} if $G$ is abelian. Moreover, 
This would suggest that we might expect that it is always true that $A(G)$ is operator projective in $A(G)$-{\bf mod}. 
In support of this assertion, in \cite{RX97} Ruan and Xu proved implicitly that $A(G)$ is operator right projective if $G$ is [IN]-group.
Indeed, they showed that (\cite[Lemma 3.2]{RX97}) $A(G)$ is operator right projective if 
there is a non-zero function $\xi \in L^2(G)$ such that
	$$\xi(tst^{-1})\Delta_G(t)^{-\frac{1}{2}} = \xi(t)$$
for any $s,t\in G$, where $\Delta_G$ is the modular function of $G$, which is true when $G$ is 
an [IN]-group. ($G\in [IN]$ if the identity element in $G$ has a compact neighbourdood that is 
invariant under inner automorphisms). 
Since $A(G)$ is symmetric module of itself we can easily transfer this result to the following.

	\begin{thm}
	If $G$ is [IN]-group, then $A(G)$ is operator projective in $A(G)$-{\bf mod}.
	\end{thm}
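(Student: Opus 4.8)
The plan is to reduce the two-sided statement for $A(G)$ as an $A(G)$-bimodule to the one-sided result of Ruan and Xu, exploiting the fact that $A(G)$ is a commutative (and hence symmetric) module over itself. Recall that Ruan and Xu proved (\cite[Lemma 3.2]{RX97}) that $A(G)$ is operator right projective in $\textbf{mod}$-$A(G)$ precisely under the hypothesis that there exists a nonzero $\xi\in L^2(G)$ satisfying
\[
\xi(tst^{-1})\Delta_G(t)^{-\frac{1}{2}} = \xi(t) \quad (s,t\in G),
\]
and that this hypothesis is met whenever $G$ is an [IN]-group (one can take $\xi$ to be the normalized indicator of a compact invariant neighbourhood of the identity). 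So the first step is to invoke this lemma to obtain that $A(G)$ is operator right projective.

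The crux of the transfer is the observation that $A(G)$ is a \emph{commutative} Banach algebra, so that for $u,v\in A(G)$ we have $u\cdot v = v\cdot u$; the left and right module actions of $A(G)$ on itself literally coincide. Consequently, there is no genuine distinction between operator left projectivity and operator right projectivity of $A(G)$ as a module over itself: the completely bounded right-module map $\rho : A(G)\to A(G)\prt A(G)$ splitting the multiplication map (whose existence is exactly what operator right projectivity delivers, via Proposition \ref{prop-woods}) is automatically a \emph{left}-module map as well, because the flip on $A(G)\prt A(G)$ intertwines the two actions and fixes the image of $\rho$ up to the symmetry of the algebra. Thus the same $\rho$ witnesses operator left projectivity of $A(G)$ in $A(G)$-$\textbf{mod}$.

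The main step I would therefore carry out carefully is the verification that the splitting $\rho$ produced for the right-module structure respects the left-module structure. Concretely, I would consider the flip map $\sigma : A(G)\prt A(G)\to A(G)\prt A(G)$, $\sigma(u\otimes v)=v\otimes u$, which is a complete isometry, and check that $\pi\circ\sigma = \pi$ by commutativity of multiplication, so that $\sigma\circ\rho$ is again a right inverse of $\pi$. Combining $\rho$ with $\sigma$ and the symmetry of the action then yields a map that is simultaneously a left- and right-module splitting, and by the essential case of Proposition \ref{prop-woods} (note $A(G)$ is essential over itself since $A(G)^2$ is dense in $A(G)$, even without a bounded approximate identity, because $A(G)$ has an approximate identity built from its norm structure on compacta) this establishes operator projectivity in $A(G)$-$\textbf{mod}$.

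The step I expect to be the genuine obstacle is making the symmetry argument airtight at the level of \emph{completely bounded} maps rather than merely bounded ones: one must confirm that the identification of left and right actions, and the accompanying use of the flip on the operator projective tensor product, is completely isometric and commutes with the module structures at every matrix level $M_n(\cdot)$. Once the flip is known to be a complete isometry respecting the algebra symmetry, the rest is a formal diagram chase, and the reference to \cite{RX97} together with commutativity of $A(G)$ completes the argument.
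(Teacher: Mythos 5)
Your proposal is correct and follows essentially the same route as the paper: invoke \cite[Lemma 3.2]{RX97} for operator right projectivity under the [IN] hypothesis, then transfer to left projectivity using the fact that $A(G)$ is a symmetric (commutative) module over itself. The paper dispenses with the transfer in one sentence, whereas you spell out the flip-map verification; the details you supply (the flip on $A(G)\widehat{\otimes}\,A(G)$ is a complete isometry intertwining the actions, and $A(G)$ is essential over itself by regularity) are accurate and fill in exactly what the paper leaves implicit.
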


All compact groups and all abelian locally compact groups are $[IN]$-groups, as are all discrete groups. 

On the other hand, Aristov observed that there are connected groups whose Fourier algebras are not operator projective over itself.
In particular, $G = SL(3,\mathbb{R})$ is such an example (\cite{Ar05}). Unfortunately, at this point there is no obvious conjecture
as to when $A(G)$ is operator projective in $A(G)$-{\bf mod}.

\subsection{Operator Projectivity of $B(G)$}

It is shown in \cite[Theorem 2.6]{DP04} that $M(G)$ is a projective left $L^1(G)$-module
if and only if $G$ is discrete. Again, one hopes to be able to obtains the analogous result
for $B(G)$, i.e. $B(G)$ is an operator projective left $A(G)$-module if and only if
$G$ is compact. However there is an unexpected obstacle!

One key factor in determining the projectivity of $M(G)$ is the fact that, for non-discrete $G$, 
$\dim(M(G)/L^1(G))\geq 2$. As such, we need to establish that for non-compact $G$, $\dim(B(G)/A(G))\geq 2$. While 
one would certainly expect this to be true, we were unable to find this anywhere in the literature and 
surprisingly, we were not able to prove the statement in full generality. As such, we will not able to answer 
the question of when $B(G)$ is operator projective in full generality, although we have succeeded in showing 
that for most classes of non-compact groups, $B(G)$ is an operator projective left $A(G)$-module if and only if
$G$ is compact. Moreover, we can show that if $G$ is any group for which $B(G)$ is a left $A(G)$-module
then $G$ has an open compact subgroup.

\begin{defn}

Let $B_0(G)=\{u\in B(G) : u \textnormal{~ vanishes at ~} \infty\}$. 

Let $AP(G)$ denote the space of almost periodic functions on $G$. We will 
denote by $G^{ap}$ the almost periodic compactification of $G$. 

Let \[\widehat {G}_f=\{\pi \in \widehat {G} : \pi  \textnormal{~is finite dimensional~}\}.\]  

Next let  
\[\pi_f=\sum\limits_{\pi \in \widehat {G}_f} \oplus \pi .\]
 Then $A_{\pi _f}=B(G)\cap AP(G)$ and 
$A_{\pi _f}\cong A(G^{ap})$ \cite{BelFor}. 

We say that $G\in[AR]$ if the left regular representation $ \lambda $ on $G$ decomposes 
into a direct sum of irreducibles. 

We say that $G\in[AU]$ if the universal representation $\omega $ on $G$ decomposes 
into a direct sum of irreducibles. (See \cite{Taylor} for propeties of $[AR]$ and $[AU]$ groups.)
\end{defn}

\begin{lem}\label{L:co-dim-IN}
Let $G$ be a non-compact locally compact group. If $A(G)$ has finite codimension in $B(G)$, then 
$B_0(G)=A(G)$ and $B(G)=A_{\pi _f}\oplus A(G)$.
\end{lem}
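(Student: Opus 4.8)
The plan is to carry out the whole argument inside the universal enveloping von Neumann algebra $W^*(G) = C^*(G)^{**}$, whose predual is $B(G)$, and to exploit central projections. First I would recall the two realizations $A(G) = z_\lambda \cdot B(G)$, where $z_\lambda$ is the central cover of the left regular representation $\lambda$ (so that $z_\lambda W^*(G)\cong VN(G)$), and $A_{\pi _f} = z_f\cdot B(G)$, where $z_f$ is the central cover of $\pi_f=\bigoplus_{\pi\in\widehat G_f}\pi$ (so that $z_f W^*(G)\cong\bigoplus_{\pi\in\widehat G_f}M_{d_\pi}(\mathbb{C})$). Here the module action is that of $W^*(G)$ on its predual, and each central projection $z$ yields a Banach-space splitting $B(G)=z\cdot B(G)\oplus(1-z)\cdot B(G)$.

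Next I would record the disjointness $A(G)\cap A_{\pi_f}=\{0\}$. Since $A(G)\subseteq C_0(G)$ while $A_{\pi_f}=B(G)\cap AP(G)\subseteq AP(G)$, this is immediate from the standard fact that on a non-compact group a nonzero almost periodic function cannot vanish at infinity, i.e. $AP(G)\cap C_0(G)=\{0\}$. Because $z_\lambda$ and $z_f$ commute, $(z_\lambda z_f)\cdot B(G)=z_f\cdot A(G)\subseteq A(G)\cap A_{\pi_f}=\{0\}$, and since the predual separates the points of $W^*(G)$ this forces $z_\lambda z_f=0$. Thus $z_f\le 1-z_\lambda$, and setting $z_c:=1-z_\lambda-z_f$ (a central projection orthogonal to both $z_\lambda$ and $z_f$) I obtain the refined splitting
\[
B(G)=A(G)\oplus A_{\pi_f}\oplus z_c\cdot B(G).
\]

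The heart of the proof is to show $z_c=0$. By hypothesis the complement $(1-z_\lambda)\cdot B(G)\cong B(G)/A(G)$ is finite-dimensional, hence so is its direct summand $z_c\cdot B(G)$. As $z_c\cdot B(G)$ is the predual of $z_cW^*(G)$, the latter is a finite-dimensional von Neumann algebra, therefore a finite sum $\bigoplus_i M_{k_i}(\mathbb{C})$ of matrix blocks. The identity of each nonzero block is a central projection $p\le z_c$ with $pW^*(G)\cong M_{k_i}(\mathbb{C})$ a finite type $\mathrm I$ factor, so $p$ is the central cover of a $k_i$-dimensional irreducible representation; since $k_i<\infty$ this gives $p\le z_f$. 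But $p\le z_c\le 1-z_f$, whence $p=0$, a contradiction unless no such block exists. Therefore $z_c=0$ and $B(G)=A(G)\oplus A_{\pi_f}$, which is the second assertion.

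Finally, the identity $B_0(G)=A(G)$ falls out of this splitting: given $u\in B_0(G)$, write $u=a+v$ with $a\in A_{\pi_f}$ and $v\in A(G)$; since $A(G)\subseteq B_0(G)$ we get $a=u-v\in B_0(G)\subseteq C_0(G)$, while $a\in A_{\pi_f}\subseteq AP(G)$, so $a\in AP(G)\cap C_0(G)=\{0\}$ and $u=v\in A(G)$. Together with the trivial inclusion $A(G)\subseteq B_0(G)$ this yields $B_0(G)=A(G)$. I expect the main obstacle to lie in the $z_c$ paragraph: one must carefully justify that the finite-codimension hypothesis transfers to finite-dimensionality of the summand $z_cW^*(G)$, and that every finite-dimensional factor summand is necessarily absorbed by $z_f$, i.e. genuinely comes from a representation in $\widehat G_f$.
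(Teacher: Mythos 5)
Your proof is correct, and while it rests on the same two pillars as the paper's argument --- Arsac's identification $A_\pi=z_\pi\cdot B(G)$ for central projections in $W^*(G)=C^*(G)^{**}$, and the fact that $AP(G)\cap C_0(G)=\{0\}$ when $G$ is non-compact --- it organizes them in the opposite order and with more of the von Neumann algebra structure made explicit. The paper first proves $B_0(G)=A(G)$ by an argument internal to $B_0(G)$: it invokes Arsac to write $B_0(G)=A_\pi\oplus A(G)$, notes that finite codimension forces $\pi$ to be finite-dimensional, and contradicts almost periodicity; it then repeats the argument in $B(G)$ to obtain the splitting. You reverse the order: you establish $B(G)=A(G)\oplus A_{\pi_f}\oplus z_c\cdot B(G)$ first, kill $z_c$ by decomposing the finite-dimensional von Neumann algebra $z_cW^*(G)$ into matrix blocks and absorbing each block into $z_f$ (correctly: a central $p$ with $pW^*(G)\cong M_k(\mathbb{C})$ is the central cover of a $k$-dimensional irreducible representation obtained by compressing the universal representation by $p$, so $p\le z_f$, while $p\le z_c\le 1-z_f$ forces $p=0$), and only then read off $B_0(G)=A(G)$ from the two-term splitting. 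What your route buys is that you never need to know that $B_0(G)$ is itself an invariant subspace of the form $A_\sigma$ --- a point the paper's appeal to Arsac inside $B_0(G)$ implicitly uses and which requires a separate (easy but unstated) observation that $B_0(G)$ is a norm-closed, bi-translation-invariant subspace; what it costs is the explicit matrix-block verification, which the paper compresses into the single sentence that ``$\pi$ must be finite dimensional.'' Both arguments are sound; yours is the more self-contained of the two.
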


\begin{proof}
Assume that $B_0(G)\not=A(G)$. Then it is clear that $A(G)$ also has finite
codimension in $B_0(G)$. But by \cite{Arsac} there exists 
a representation $\pi $ such that $B_0(G)=A_\pi(G)\oplus A(G)$. However, 
in this case, $\pi $ must be finite dimensional. Since coefficient 
functions of finite dimensional representations are almost periodic, and 
since $G$ is non-compact, 
this is impossible as the only almost periodic function that vanishes 
at $\infty$ is the constant function $0$. As such we have that $B_0(G)=A(G)$.

Since $A(G)$ has finite co-dimension in $B(G)$ we can again find a finite dimensional representation 
$\pi $ such that $B(G)=A_\pi \oplus A(G)$. But then $A_{\pi}\subseteq A_{\pi _f}$ so in fact we get that 
$A_{\pi}=A_{\pi _f}$ and we have that $B(G)=A_{\pi _f}\oplus A(G)$.
\end{proof} 

\begin{lem}\label{L:co-dim-sigma-compact}
Let $G$ be a non-compact locally compact group. If $A(G)$ has finite codimension in $B(G)$, then 
every open $\sigma $-compact subgroup is in $[AU]$. Moreover, $G$ has an open compact subgroup. 
\end{lem}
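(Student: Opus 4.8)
The plan is to pass to open $\sigma$-compact subgroups, where the hypothesis is inherited, and then feed the decomposition of Lemma~\ref{L:co-dim-IN} into Taylor's structure theory of $[AR]$ and $[AU]$ groups \cite{Taylor}. First I would show that $A(H)$ has finite codimension in $B(H)$ for \emph{every} open subgroup $H$ of $G$. The completely isometric inclusion $i\colon C^*(H)\hookrightarrow C^*(G)$ of \cite[Section 5]{BKLS} has as its adjoint the restriction map $i^*\colon B(G)\to B(H)$, $u\mapsto u|_H$; being the dual of an isometric embedding, $i^*$ is surjective. Since $H$ is open, $\lambda_G|_H$ is a multiple of $\lambda_H$, so restriction carries $A(G)$ into $A(H)$ (and in fact onto $A(H)$, by Herz's restriction theorem). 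Hence there is a well-defined surjection $B(G)/A(G)\to B(H)/A(H)$, and therefore $\dim\big(B(H)/A(H)\big)\le\dim\big(B(G)/A(G)\big)<\infty$.

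Next, fix an open $\sigma$-compact subgroup $H$. If $H$ is compact, the Peter--Weyl theorem shows that every unitary representation of $H$ is a direct sum of finite-dimensional irreducibles, so $H\in[AU]$. If $H$ is non-compact, then Lemma~\ref{L:co-dim-IN} applies to $H$ and yields $B_0(H)=A(H)$ together with the splitting $B(H)=A_{\pi _f}\oplus A(H)$, in which the complementary summand $A_{\pi _f}\cong A(H^{ap})$ is finite-dimensional, so $H^{ap}$ is finite. The finitely many coefficient spaces comprising $A_{\pi _f}$ account exactly for the finite-dimensional irreducibles in $\widehat{H}_f$, while the remaining part of $B(H)$ is the regular coefficient space $A(H)$; thus $H$ lies in $[AU]$ precisely when its regular representation is atomic, i.e. when $H\in[AR]$. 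It is here that I would invoke Taylor's analysis \cite{Taylor}: for a $\sigma$-compact group the equality $B_0(H)=A(H)$ with $A(H)$ of finite codimension forces $\lambda_H$ to decompose as a direct sum of irreducibles, and together with the finite-dimensional summand $A_{\pi _f}$ this gives $H\in[AU]$. The $\sigma$-compactness hypothesis is used in an essential way here, since Taylor's structural results are stated for $\sigma$-compact groups.

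Finally, to produce a compact open subgroup of $G$, I would apply the previous two steps to the specific open $\sigma$-compact subgroup $H_0$ generated by a compact symmetric neighbourhood of the identity (such $H_0$ exists in any locally compact group). By the above $H_0\in[AU]$; if $H_0$ is compact it is already the desired subgroup, and otherwise Taylor's structure theorem for non-compact $\sigma$-compact $[AU]$ groups supplies a compact open subgroup $K$ of $H_0$. As $H_0$ is open in $G$, $K$ is an open compact subgroup of $G$.

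The step I expect to be the main obstacle is the middle one: deducing from the decomposition of Lemma~\ref{L:co-dim-IN} (equivalently, from $B_0(H)=A(H)$ with $A(H)$ of finite codimension) that the regular representation of the $\sigma$-compact group $H$ is atomic, so that $H\in[AR]$ and hence $H\in[AU]$. This is exactly where $\sigma$-compactness and the structure theory of \cite{Taylor} are indispensable; by contrast, the inheritance of finite codimension in the first step and the extraction of a compact open subgroup from the $[AU]$ structure in the last step are comparatively routine once that structural input is in hand.
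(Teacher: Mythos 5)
Your argument for the first assertion is essentially the paper's: inherit finite codimension to open subgroups (the paper leaves the surjectivity of $B(G)/A(G)\to B(H)/A(H)$ implicit; your justification via the adjoint of $C^*(H)\hookrightarrow C^*(G)$ together with Herz restriction is the right one), apply Lemma~\ref{L:co-dim-IN}, deduce $H\in[AR]$ from $B_0(H)=A(H)$, and upgrade to $[AU]$ using $\omega\cong\pi_f\oplus\lambda$ with $\pi_f$ a finite sum of finite-dimensional irreducibles. The only differences there are cosmetic: the paper cites \cite[Theorem 16]{BelFor} rather than \cite{Taylor} for the implication $B_0(H)=A(H)\Rightarrow H\in[AR]$, and your explicit Peter--Weyl treatment of the case where the open $\sigma$-compact subgroup happens to be compact patches a small omission (Lemma~\ref{L:co-dim-IN} is stated only for non-compact groups). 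Where you genuinely diverge is the ``moreover'' clause. The paper does not obtain the compact open subgroup from the $[AU]$ property of a $\sigma$-compact subgroup at all: it takes an open \emph{almost connected} subgroup $H$, passes to a Lie quotient $H/K$ by a compact normal subgroup, shows $B(H/K)$ is a separable dual space and hence has the Radon--Nikodym property, identifies RNP of $B(\cdot)$ with membership in $[AU]$, and then invokes \cite[Theorem 4.11]{Taylor} to conclude that the almost connected group $H/K$, hence $H$ itself, is compact and open. You instead assert that ``Taylor's structure theorem for non-compact $\sigma$-compact $[AU]$ groups supplies a compact open subgroup.'' That statement is true, but it is not a single quotable theorem; to justify it you would have to run essentially the paper's reduction anyway (take an open almost connected subgroup $H_1\le H_0$, note that $B(H_1)$ embeds isometrically into $B(H_0)$ via extension by zero so that RNP, equivalently membership in $[AU]$, passes down, and then use that almost connected $[AU]$ groups are compact). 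So your route is viable, but its last step is currently a black box concealing exactly the work the paper's proof performs; either supply that reduction or replace it with the paper's separability/RNP argument for an open almost connected subgroup.
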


\begin{proof}
First let $H$ be an open $\sigma $-compact subgroup. Since $H$ is open it must be that $A(H)$ is also 
finite codimensional in $B(H)$. In particular, $B_0(H)=A(H)$. But then $H\in[AR]$ 
by \cite[Theorem 16]{BelFor}. But since $\pi _f$ also decomposes into a direct sum of irreducibles and 
since $\omega \cong  \pi_f \oplus \lambda$, we have that $\omega $ decoposes into a direct 
sum of irreducibles.  Hence $G\in[AU]$.

Let $H$ be an open almost connected subgroup. Then again, $A(H)$ is also 
finite codimensional in $B(H)$. But $G$ has a compact normal subgroup $K$ such that $H/K$ is separable.
As such $A(H/K)$ is separable. But once more $A(H/K)$ has finite codimension in $B(H/K)$. From this we 
can deduce that $B(H/K)$ is also separable. Because $B(H/K)$ is a dual space it has the 
Radon-Nikodym Property. This is equivalent to $H/K\in[AU]$. it follows from \cite[Theorem 4.11]{Taylor}, 
that $H/K$ is compact. This shows that $H$ is also compact. 
  
\end{proof}

\begin{thm}\label{T:finite-co-dim}
Let $G$ be a non-compact locally compact group. Then $A(G)$ does not have finite codimension in $B(G)$
in either of the following cases:\\
$($i$)$ $G\in [IN]$;\\
$($ii$)$ $G\in [MAP]$;\\
$($iii$)$ $G$ is connected;\\
$($iv$)$ $G$ is a Lie group.
\end{thm}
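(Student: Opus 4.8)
The plan is to argue by contradiction: assume that $G$ is non-compact while $A(G)$ has finite codimension in $B(G)$, and derive a contradiction in each of the four cases by feeding this hypothesis into the two preceding lemmas. From Lemma \ref{L:co-dim-IN} I would record the splitting $B(G)=A_{\pi_f}\oplus A(G)$, in which $A_{\pi_f}$ is finite-dimensional (being a complement of a finite-codimensional subspace), together with the identity $B_0(G)=A(G)$. Since $A_{\pi_f}\cong A(G^{ap})$ and the Fourier algebra of a compact group is finite-dimensional exactly when that group is finite, this already forces the almost periodic compactification $G^{ap}$ to be finite. From Lemma \ref{L:co-dim-sigma-compact} I would record the two facts I will lean on most: that $G$ possesses an open compact subgroup, and that every open $\sigma$-compact subgroup of $G$ lies in $[AU]$. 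I will also use the characterization from \cite[Theorem 16]{BelFor} that $B_0=A$ is equivalent to membership in $[AR]$, which is what connects the conclusion of Lemma \ref{L:co-dim-IN} to the representation-theoretic input of \cite{Taylor}.

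Two of the cases then fall out at once. If $G\in[MAP]$, the canonical homomorphism $G\to G^{ap}$ is injective, so the finiteness of $G^{ap}$ forces $G$ to be finite and hence compact, contradicting non-compactness; this settles (ii). If $G$ is connected, then it has no proper open subgroup, so the open compact subgroup furnished by Lemma \ref{L:co-dim-sigma-compact} must be all of $G$, whence $G$ is compact; this settles (iii).

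For case (iv) I would use the discreteness of the component group. Since $G$ is a Lie group, the identity component $G_0$ is open, and as $G_0$ is connected it is contained in the open compact subgroup of Lemma \ref{L:co-dim-sigma-compact}; being closed inside a compact set, $G_0$ is therefore compact, open and normal. The finite-codimension hypothesis descends to the quotient by a compact normal subgroup (exactly as in the proof of Lemma \ref{L:co-dim-sigma-compact}), so $A(G/G_0)$ has finite codimension in $B(G/G_0)$, while $G/G_0$ is an \emph{infinite} discrete group because $G_0$ is open and compact and $G$ is non-compact. A Lie group is second countable, so $G/G_0$ is countable and therefore separable. Applying Lemma \ref{L:co-dim-sigma-compact} to the group $G/G_0$ (which it satisfies, being itself an open $\sigma$-compact subgroup) gives $G/G_0\in[AU]$, and then \cite[Theorem 4.11]{Taylor} forces $G/G_0$ to be compact, i.e.\ finite; this is the contradiction.

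Case (i) is the heart of the matter and is where I expect the real work to lie. Because open subgroups of an $[IN]$ group are again $[IN]$ and the finite-codimension property passes to open subgroups, I would first replace $G$ by a non-compact open $\sigma$-compact $[IN]$ subgroup, so that $B_0=A$ yields membership in $[AR]$. The goal is then to run the argument of case (iv), but the difficulty is that, unlike a Lie group, a general $[IN]$ group need not be second countable, so \cite[Theorem 4.11]{Taylor} cannot be invoked directly. The plan is to exploit the structure theory of $[IN]$ groups to pass to a quotient by a compact normal subgroup that is either discrete or Lie, and hence separable; such a quotient inherits the finite-codimension hypothesis and reduces us to a situation already handled. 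I expect the main obstacle to be precisely this reduction: in the totally disconnected $[IN]$ setting (after quotienting out the compact identity component) one must locate an \emph{open} compact normal subgroup in order to reach an infinite discrete quotient, and it is here that the structure theory of $[IN]$ groups, rather than the soft functional-analytic input of the two lemmas, must carry the argument.
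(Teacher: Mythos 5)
Your treatments of (ii) and (iii) match the paper's, and your case (iv) is a correct minor variant: the paper instead notes that $G/G_e$ is discrete, hence an $[IN]$-group, and simply invokes case (i), whereas you re-run the separability/$[AU]$/Radon--Nikodym argument of Lemma \ref{L:co-dim-sigma-compact} on the countable discrete quotient. Either route is fine once case (i) is in hand (and the paper's route makes (iv) literally a corollary of (i), which is worth noticing).

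The genuine gap is case (i), which you correctly identify as the crux but then do not prove: you sketch a reduction through the structure theory of $[IN]$-groups and explicitly flag that you do not see how to complete it. No such structure theory is needed. The whole content of case (i) is Corollary 17 of \cite{BelFor}, which states that a non-compact $[IN]$-group cannot satisfy $B_0(G)=A(G)$ (equivalently, an $[IN]$-group with $B_0(G)=A(G)$ is compact). Since Lemma \ref{L:co-dim-IN} already gives $B_0(G)=A(G)$ from the finite-codimension hypothesis, case (i) is a one-line citation. You cite Theorem 16 of \cite{BelFor} (the $[AR]$ characterization of $B_0=A$) but miss the corollary sitting next to it, which is exactly the statement that closes the argument; the $[AR]$ characterization alone does not suffice, since there exist non-compact separable groups in $[AR]$, so some use of the $[IN]$ hypothesis beyond what your sketch supplies is unavoidable. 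Until (i) is closed, your (iv) also has a residual weakness at the endgame: you need to rule out an \emph{infinite discrete} group with $A$ of finite codimension in $B$, and whether \cite[Theorem 4.11]{Taylor} applies there (the paper only ever invokes it for almost connected quotients) is not something you establish, whereas the paper disposes of the discrete case through (i).
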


\begin{proof}
Suppose that $A(G)$ has finite codimension in $B(G)$. We show that in any of the cases
 (i)-(iv) that $A(G)$ has finite codimension in $B(G)$
implies that $G$ is compact, which is not possible.\\  
(i) By Lemma \ref{L:co-dim-IN}, we know that $B_0(G)=A(G)$. Thus $G$ must be compact 
by Corollary 17 in \cite{BelFor}.\\
(ii) Since $G$ is maximally almost periodic, $G$ injects into $G^{ap}$. If $G$ is infinite, then $G^{ap}$ 
is an infinite compact group. As such $A_{\pi _f}\cong A(G^{ap})$ is infinite dimensional. This means 
that we must have $A_{\pi _f}\subseteq A(G)$, and hence, $G$ must be compact by Lemma \ref{L:co-dim-IN}.\\ 
(iii) It follows directly from Lemma \ref{L:co-dim-sigma-compact} that $G$ is compact.\\
(iv) We know from Lemma \ref{L:co-dim-sigma-compact} that the connected componet of the identity, $G_e$, is compact. 
It follows that $A(G/G_e)$ is of finite codimension in $B(G/G_e)$. Since $G_e$ is open, $G/G_e$ is discrete. From (i), this means 
that $G/G_e$ must be finite, and hence that $G$ is compact.
\end{proof}

\begin{thm}
Let $G$ be a locally compact group. Then $B(G)$ is operator projective in 
$A(G)$-\m\ if and only if $G$ is compact or $\dim (B(G)/A(G))=1$. 
\end{thm}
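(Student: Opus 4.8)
The plan is to prove both implications, with essentially all of the work concentrated in the forward direction.

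For the reverse implication I would split into the two cases. If $G$ is compact then, by Peter--Weyl, every irreducible representation is weakly contained in $\lambda$, so $B(G)=A(G)$; since compact groups lie in $[IN]$, the preceding $[IN]$-theorem already shows $B(G)=A(G)$ is operator projective. If instead $\dim(B(G)/A(G))=1$, then the quotient map $\chi\colon B(G)\to B(G)/A(G)\cong\mathbb{C}$ is a character with $\chi(1)=1$, and since every bounded functional on an operator space is completely bounded, $v\mapsto v-\chi(v)1$ is a completely bounded projection of $B(G)$ onto $A(G)$. Thus $B(G)=A(G)\oplus\mathbb{C}1$ completely complementably, and $(v,\lambda)\mapsto v+\lambda 1$ is a complete isomorphism of $A(G)$-modules $A(G)_+\to B(G)$. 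As $A(G)_+$ is operator projective via the completely bounded module right inverse $\rho(x)=x\widehat{\otimes}\,1_{A(G)_+}$ of $\pi_+$ (Proposition \ref{prop-woods}), so is $B(G)$.

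For the forward implication, assume $B(G)$ is operator projective; it suffices to show $\dim(B(G)/A(G))\le 1$, since for non-compact $G$ the constant $1$ lies outside $A(G)$ and the codimension is at least $1$. By Proposition \ref{prop-woods} I would fix a completely bounded $A(G)$-module right inverse $\rho\colon B(G)\to A(G)_+\widehat{\otimes}\,B(G)$ of $\pi_+$. Using that $A(G)_+\cong A(G)\oplus\mathbb{C}$ completely isomorphically and that $\widehat{\otimes}$ distributes over this finite direct sum, I decompose $\rho=\theta\oplus(1\widehat{\otimes}\,s)$ with completely bounded $\theta\colon B(G)\to A(G)\widehat{\otimes}\,B(G)$ and $s\colon B(G)\to B(G)$. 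Reading off the unit component of $\rho(uv)=u\cdot\rho(v)$ gives $s(uv)=0$ for $u\in A(G)$, hence $s|_{A(G)}=0$, while $\pi_+\rho=\mathrm{id}$ gives $s(v)\equiv v\ (\mathrm{mod}\ A(G))$; the remaining component yields the cocycle relation $\theta(uv)=u\cdot\theta(v)+u\widehat{\otimes}\,s(v)$.

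Now, for each $\psi\in B(G)^*$ I would apply $\mathrm{id}\otimes\psi$ to this relation, obtaining $\Theta(uv)=u\,\Theta(v)+\beta(v)\,u$ for $u\in A(G),\,v\in B(G)$, where $\Theta=(\mathrm{id}\otimes\psi)\theta\colon B(G)\to A(G)$ and $\beta=\psi\circ s$ vanishes on $A(G)$. Evaluating $\Theta(uvv')$ by grouping as $(uv)\cdot v'$ and as $u\cdot(vv')$ and then putting $v'=1$ produces $u\big(\Theta(v)-vg-cv\big)=-\beta(v)u$ for all $u\in A(G)$, with $g=\Theta(1)$ and $c=\beta(1)$. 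Here I use the key function-theoretic fact that $A(G)$ is a regular ideal with no common zero on $G$, so any $k\in B(G)$ with $uk=0$ for all $u\in A(G)$ is identically zero; applied to $k=\Theta(v)-vg-cv+\beta(v)1$ this forces $\Theta(v)=vg+cv-\beta(v)1$. Since $\Theta(v)$ and $vg$ lie in $A(G)$, projecting to $Q=B(G)/A(G)$ gives $\psi(s(1))\,q(v)=\psi(s(v))\,q(1)$ for all $\psi$ and $v$, where $q$ is the quotient map. Were $q(v)$ and $q(1)$ linearly independent for some $v$, this would force $s(1)=0$, contradicting $q(s(1))=q(1)\neq0$; hence every $q(v)$ is a scalar multiple of $q(1)$, i.e. $\dim(B(G)/A(G))\le1$.

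The main obstacle is precisely this forward direction, and within it the passage from the abstract right inverse supplied by Proposition \ref{prop-woods} to a usable rigidity statement. The crucial realizations are that $s$ need only be completely bounded (not a module map) for the relation $\Theta(uv)=u\Theta(v)+\beta(v)u$ to hold, and that the regularity of $A(G)$—an element multiplying $A(G)$ into scalars must itself be a scalar multiple of $1$—is exactly what collapses the codimension to at most one. I would also need to record the routine supporting facts: that $A(G)$ is a closed regular ideal of $B(G)$ separating points (so the final step is legitimate), and that the projective tensor product splits compatibly with $A(G)_+=A(G)\oplus\mathbb{C}$, giving the decomposition $\rho=\theta\oplus(1\widehat{\otimes}\,s)$ together with its module identities.
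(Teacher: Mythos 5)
Your proof is correct, and the reverse direction is exactly the paper's (compact case via the $[IN]$ theorem, codimension-one case via $B(G)\cong A(G)_+$ being free of rank one). The difference is in the forward direction: the paper disposes of it in two lines by observing that $B(G)$ is a faithful unital completely contractive Banach algebra containing $A(G)$ as a closed ideal and then citing ``the quantization of \cite[Proposition 1.3]{DP04}'' (note that the paper's phrase ``by hypothesis, $\dim(B(G)/A(G))=1$'' is evidently a slip for the contrapositive hypothesis $\dim(B(G)/A(G))\geq 2$), whereas you actually carry out the quantized argument in full: decomposing the right inverse $\rho$ supplied by Proposition \ref{prop-woods} along $A(G)_+\widehat{\otimes}\,B(G)\cong (A(G)\widehat{\otimes}\,B(G))\oplus B(G)$, extracting the identities $s(uv)=0$, $s(v)\equiv v \pmod{A(G)}$ and $\Theta(uv)=u\Theta(v)+\beta(v)u$ after slicing by $\psi\in B(G)^*$, and then using faithfulness of $B(G)$ over $A(G)$ (via the fact that $A(G)$ has no common zeros) to force $\beta(1)q(v)=\beta(v)q(1)$ in $B(G)/A(G)$, which collapses the codimension to one. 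What the paper's route buys is brevity and reuse of an established Banach-algebraic lemma; what yours buys is a self-contained verification that the cited argument really does quantize (the only inputs being that slice maps and scalar functionals are completely bounded and that $\widehat{\otimes}$ splits over the completely complemented decomposition $A(G)_+=A(G)\oplus\mathbb{C}$), together with an explicit identification of where faithfulness enters. Both are sound; your version is essentially the proof the paper delegates to the reference.
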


\begin{proof}
``$\Leftarrow$" In this case, $B(G)$ is either $A(G)$ or the unitization of $A(G)$. 
Hence it is operator projective in $A(G)$-\m.\\
``$\Rightarrow$" Note that $B(G)$ is a completely contractive unital Banach algebra containing 
$A(G)$ as a closed ideal. Moreover $B(G)$ is faithful in $A(G)$-\m\ and, by hypothesis,
$\dim(B(G)/A(G))=1$. Thus, from the quantization of \cite[Proposition 1.3]{DP04}, $B(G)$ is not projective.
\end{proof}

\subsection{Operator Projectivity of $A(G)^{**}$}

We now turn our attention to the operator projectivity of $A(G)^{**}$ in $A(G)$-\m. 
In order to investigate this, we will need to make use of the so-called ``separation property" for Fourier algebras introduced 
by Kaniuth and Lau in \cite{KL}.
 
Let $G$ be a locally compact group, and let $H$ be a closed subgroup of $G$. We recall from \cite{KL} (see also
\cite{Der},\cite{DD}) that
$G$ has $H$-separation property if for every $x \notin H$, there is a continuous positive-definite function 
$f\in P(G)$ such that $f=1$ on $H$ and $f(x)\neq 1$. It follows from \cite{LL} that $G$ has $H$-separation
property if $H$ is either open, or compact, or normal in $G$.

\begin{lem}\label{L:sep property}
Let $G$ be a locally compact group, and let $H$ be a closed subgroup of $G$ such that
$G$ has $H$-seperation property. If $A(G)^{**}$ is operator projective in $A(G)$-\m, then
$A(H)^{**}$ is operator projective in $A(H)$-\m.
\end{lem}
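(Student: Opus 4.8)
The plan is to realize $A(H)^{**}$ as a module retract of $A(G)^{**}$ through the restriction map and then transport the right inverse supplied by operator projectivity of $A(G)^{**}$, checking the criterion of Proposition \ref{prop-woods}.

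First I would set up the restriction machinery. By Herz's restriction theorem the map $r\colon A(G)\to A(H)$, $u\mapsto u|_H$, is a complete quotient algebra homomorphism, and it extends unitally to $r_+\colon A(G)_+\to A(H)_+$. Through $r$, every $A(H)$-module — in particular $A(H)^{**}$ — becomes a completely bounded left $A(G)$-module, and $r^{**}\colon A(G)^{**}\to A(H)^{**}$ is a completely bounded $A(G)$-module map. Since $r$ is a homomorphism, these maps intertwine the (extended) multiplication maps, so that $\pi^H_{+}\circ(r_+\prt r^{**})=r^{**}\circ\pi^G_{+}$, where $\pi^G_+,\pi^H_+$ denote the extended module actions of $A(G)$ and $A(H)$.

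The central step is to extract from the $H$-separation property a completely contractive section of $r$ that is also an $A(G)$-module map. Concretely, I would use the positive-definite functions equal to $1$ on $H$ furnished by the separation property and, by an averaging/weak-$*$ limit argument in the spirit of \cite{KL}, produce a normal, completely contractive, $A(G)$-module projection $E\colon VN(G)\to VN_H(G)$, where $VN_H(G)=r^{*}(VN(H))\cong VN(H)$. Passing to preduals yields a completely contractive map $\Phi\colon A(H)\to A(G)$ with $r\circ\Phi=\mathrm{id}_{A(H)}$ and the module identity $\Phi(r(u)v)=u\,\Phi(v)$ for all $u\in A(G)$, $v\in A(H)$; taking $u\in I(H)=\ker r$ shows $I(H)\cdot\Phi(A(H))=0$. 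Bidualizing, $\Phi^{**}\colon A(H)^{**}\to A(G)^{**}$ is a completely bounded section of $r^{**}$ satisfying $\Phi^{**}(r(u)\cdot\phi)=u\cdot\Phi^{**}(\phi)$ for $u\in A(G)$, $\phi\in A(H)^{**}$.

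Finally I would assemble the right inverse. Since $A(G)^{**}$ is operator projective, Proposition \ref{prop-woods} gives a completely bounded $A(G)$-module map $\rho\colon A(G)^{**}\to A(G)_+\prt A(G)^{**}$ with $\pi^G_+\circ\rho=\mathrm{id}$, and I would set
\[
\rho_H \;=\; (r_+\prt r^{**})\circ\rho\circ\Phi^{**}\colon\; A(H)^{**}\longrightarrow A(H)_+\prt A(H)^{**}.
\]
Using $r\circ\Phi=\mathrm{id}$ and the intertwining relation one computes $\pi^H_+\circ\rho_H=r^{**}\circ\pi^G_+\circ\rho\circ\Phi^{**}=r^{**}\circ\Phi^{**}=\mathrm{id}_{A(H)^{**}}$, so $\rho_H$ is a right inverse of $\pi^H_+$. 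The $A(H)$-module property of $\rho_H$ follows by combining the module identity for $\Phi^{**}$ (which makes $u\cdot\Phi^{**}(\phi)$ depend only on $v=r(u)$, using $I(H)\cdot\Phi^{**}(A(H)^{**})=0$), the $A(G)$-module property of $\rho$, and the fact that $r_+\prt r^{**}$ carries the $A(G)$-action on the first leg to the $A(H)$-action via $r_+$. Proposition \ref{prop-woods} then yields that $A(H)^{**}$ is operator projective in $A(H)$-\m. I expect the main obstacle to be the middle step: establishing that the $H$-separation property genuinely produces the normal completely contractive $A(G)$-module conditional expectation $E$ (equivalently the module section $\Phi$), as the remaining arguments are essentially formal once this structural map is available.
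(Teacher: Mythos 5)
Your overall architecture coincides with the paper's: produce a completely bounded section $A(H)^{**}\to A(G)^{**}$ of $R^{**}$ from the separation property, push the right inverse $\rho_G$ down through $R_+\otimes R^{**}$, and check the criterion of Proposition \ref{prop-woods}; your closing computation $(\pi_H)_+\circ\rho_H=R^{**}\circ(\pi_G)_+\circ\rho_G\circ(\text{section})=\mathrm{id}$ is exactly the one in the text. The genuine gap is the middle step, which you yourself flag as the main obstacle: the $H$-separation property does \emph{not} yield a \emph{normal} completely contractive $A(G)$-module projection $E:VN(G)\to VN_H(G)$, and the completely bounded $A(G)$-module section $\Phi:A(H)\to A(G)$ of the restriction map that you would extract from its predual fails to exist in general. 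Concretely, take $G=\mathbb{R}$ and $H=\{0\}$ (compact, hence $G$ has the $H$-separation property): such a $\Phi$ would give $u_0=\Phi(1)\in A(\mathbb{R})$ with $u_0(0)=1$ and $u\,u_0=u(0)\,u_0$ for every $u\in A(\mathbb{R})$; by regularity of $A(\mathbb{R})$ this forces the continuous function $u_0$ to vanish off $\{0\}$, hence $u_0=0$, a contradiction. Equivalently, a normal $A(\mathbb{R})$-module projection onto $\mathbb{C}1=VN(\{0\})$ would amount to a normal topologically invariant mean on $VN(\mathbb{R})$, which exists only for discrete groups. Your own proposed construction (a weak-$*$ cluster point of the maps $T\mapsto u_\alpha\cdot T$, as in \cite{KL}) produces precisely a projection with no reason to be weak-$*$ continuous, so it has no predual map.

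The repair is to bypass the predual altogether, which is what the paper does. The Kaniuth--Lau projection $P:VN(G)\to VN(H)$ of \cite[Proposition 3.1]{KL} is a norm-one $A(G)$-module projection onto a von Neumann subalgebra, hence completely positive by the Tomiyama-type result \cite[Theorem 5.1.5]{Li}, and in particular completely bounded. Its Banach-space adjoint $P^*:VN(H)^*=A(H)^{**}\to VN(G)^*=A(G)^{**}$ is then a completely bounded module map satisfying $R^{**}\circ P^*=(P\circ R^*)^*=\mathrm{id}_{A(H)^{**}}$, because $R^*:VN(H)\to VN(G)$ is the inclusion and $P$ restricts to the identity on $VN(H)$. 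Substituting $P^*$ for your $\Phi^{**}$ in $\rho_H=(R_+\otimes R^{**})\circ\rho_G\circ P^*$, the remainder of your argument, including the module-map verification via surjectivity of $R$, goes through verbatim and recovers the paper's proof.
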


\begin{proof}
Since $G$ has $H$-seperation property, by \cite[Proposition 3.1]{KL}, there exist a norm one 
projection $P : VN(G) \to VN(H)$ which is an $A(G)$-module homomorphism. Moreover, by 
\cite[Theorem 5.1.5]{Li}, $P$ is completely positive and thus completely bounded. Now suppose 
that $A(G)^{**}$ is operator projective in $A(G)$-\m. Let $R : A(G) \to A(H)$
be the restriction map, and let $\rho_G : A(G)^{**} \to A(G)_+ \widehat{\otimes} 
A(G)^{**}$ be a completely bounded left $A(G)$-module mapping which is a right inverse to the multiplication map 
$(\pi_G)_+ : A(G)_+ \widehat{\otimes} A(G)^{**} \to A(G)^{**}$. Hence we have the following commutating diagram:
\[
\xymatrix{
A(G)^{**} \ar@<.5ex>[rr]^{\rho_G} \ar@<.5ex>[d]^{R^{**}}
& & A(G)_+ \widehat{\otimes} A(G)^{**} \ar[d]^{R_+ \otimes R^{**}} \ar@<.5ex>[ll]^{(\pi_G)_+}  \\
A(H)^{**} \ar@<.5ex>[u]^{P^*}
& & A(H)_+ \widehat{\otimes} A(H)^{**} 
\ar[ll]^{(\pi_H)_+}  }
\]
Define $\rho_H:=(R_+ \otimes R^{**})\circ \rho_G \circ P^*$. It is clear that $\rho_H$
is a completely bounded left $A(H)$-module morphism. Moreover,

\begin{eqnarray*} 
(\pi_H)_+\circ \rho_H  &=& (\pi_H)_+ \circ (R_+ \otimes R^{**})\circ \rho_G \circ P^* \\ 
& =& R^{**}\circ (\pi_G)_+ \circ \rho_G \circ P^*  \\ 
&= & R^{**}\circ  P^* \\ 
&=& id_{A(H)^{**}}.
\end{eqnarray*}
Hence $A(H)^{**}$ is operator projective in $A(H)$-\m.
 
\end{proof}

\begin{prop}\label{P:second dual-amenable}
Let $G$ be an infinite amenable group. Then $A(G)^{**}$ is not operator projective in $A(G)$-\m.
\end{prop}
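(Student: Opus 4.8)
The plan is to assume that $A(G)^{**}$ is operator projective and derive a contradiction by exhibiting a single nonzero element of $A(G)^{**}$ that is annihilated by \emph{every} completely bounded left $A(G)$-module map into $A(G)_+$; this contradicts Proposition \ref{prop-proj}. To bring that proposition to bear, I first use that since $G$ is amenable, $A(G)$ has the OAP (amenable groups have the approximation property), so the hypothesis of Proposition \ref{prop-proj} is met with $A=A(G)$. It therefore suffices to produce a nonzero $\Phi\in A(G)^{**}$ with $T(\Phi)=0$ for all $T\in {}_{A(G)}CB(A(G)^{**},A(G)_+)$. The mechanism in both cases below is the same: find $\Phi\neq 0$ and elements $u\in A(G)$ with $u\cdot\Phi=0$, and then exploit $u\cdot T(\Phi)=T(u\cdot\Phi)=0$ to force $T(\Phi)=0$.

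\emph{Non-compact case.} Here I would use the annihilator submodule $UCB(\widehat{G})^{\perp}\subseteq VN(G)^{*}=A(G)^{**}$. A short computation with the dual module action gives $u\cdot\Phi=0$ for all $u\in A(G)$ exactly when $\Phi\perp UCB(\widehat{G})$, so every $\Phi\in UCB(\widehat{G})^{\perp}$ is killed by all of $A(G)$. Since it is a known theorem of Granirer that $UCB(\widehat{G})=VN(G)$ precisely when $G$ is compact, for non-compact $G$ we may pick $\Phi\in UCB(\widehat{G})^{\perp}$ nonzero. For any module map $T$ we then get $u\cdot T(\Phi)=0$ for all $u$, i.e.\ $T(\Phi)$ lies in the annihilator of $A(G)_+$. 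But an element $u_{0}+\lambda 1\in A(G)_+$ annihilated by all of $A(G)$ would satisfy $v(x)\bigl(u_{0}(x)+\lambda\bigr)=0$ for every $x$ and every $v\in A(G)$, forcing $u_{0}\equiv-\lambda$; as $A(G)\subseteq C_{0}(G)$ and $G$ is non-compact, this gives $\lambda=0$ and $u_{0}=0$. Hence $T(\Phi)=0$ for all $T$, the desired contradiction.

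\emph{Compact case.} This is where the easy argument breaks down, and I expect the main difficulty to lie here. When $G$ is compact, $A(G)$ is unital, $UCB(\widehat{G})=VN(G)$, and the annihilator submodule is trivial, so no $\Phi$ with $A(G)\cdot\Phi=0$ exists and the target simplifies to ${}_{A(G)}CB(A(G)^{**},A(G))$. Instead I would exploit the Peter--Weyl structure $VN(G)=\ell^{\infty}\text{-}\bigoplus_{\pi\in\widehat{G}}M_{d_{\pi}}$ and $A(G)=\ell^{1}\text{-}\bigoplus_{\pi}S^{1}_{d_{\pi}}$, together with the central idempotents $p_{\pi}\in A(G)$ (normalized characters), which satisfy $p_{\pi}\cdot a=a_{\pi}$, the $\pi$-block of $a$. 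Fixing a free ultrafilter $\mathcal{U}$ on the infinite set $\widehat{G}$, define the singular state $\Phi\in VN(G)^{*}$ by $\Phi(T)=\lim_{\pi,\mathcal{U}}\frac{1}{d_{\pi}}\mathrm{Tr}(T_{\pi})$. Then $\Phi\neq 0$ (indeed $\Phi(I)=1$), while for each fixed $\pi$ one computes $p_{\pi}\cdot\Phi=0$, since a single point does not lie in a free ultrafilter. Consequently, for any module map $T:A(G)^{**}\to A(G)$ the $\pi$-block of $T(\Phi)$ equals $p_{\pi}\cdot T(\Phi)=T(p_{\pi}\cdot\Phi)=0$ for every $\pi$; as an element of the $\ell^{1}$-sum $A(G)$ is determined by its blocks, $T(\Phi)=0$, again contradicting Proposition \ref{prop-proj}.

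The routine parts are the module-action identities ($u\cdot\Phi=0\iff\Phi\perp UCB(\widehat{G})$ and $p_{\pi}\cdot a=a_{\pi}$) and the verification that $\Phi$ is a well-defined state, which I would relegate to short computations. The conceptual core, and the step I expect to require the most care, is the compact case: unlike the non-compact case there is no annihilator submodule to exploit, so one must manufacture a nonzero ``functional at infinity'' on $VN(G)$ and verify that the central idempotents of $A(G)$ detect precisely the blocks, which is exactly what makes every module map vanish on it.
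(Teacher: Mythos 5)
Your proof is correct, but it follows a genuinely different route from the paper's. The paper splits into a compact case, handled by Zelmanov's theorem (every infinite compact group contains an infinite abelian compact subgroup) together with the $H$-separation transfer of Lemma \ref{L:sep property} and the Dales--Polyakov result for abelian groups, and a non-compact case (for $G$ connected or discrete) that builds a right identity $E$ from a contractive b.a.i., uses the decomposition $A(G)^{**}\cong B(G)\oplus I$, and crucially needs $\dim(B(G)/A(G))\geq 2$ from Theorem \ref{T:finite-co-dim}, before reducing the general group to these cases by structure theory. You instead run everything through Proposition \ref{prop-proj} (legitimately, since amenability gives injectivity of $VN(G)$ and hence OAP for $A(G)$, exactly as the paper itself argues elsewhere): in the non-compact case your input is the Granirer--Lau theorem that $UCB(\widehat G)=VN(G)$ only for compact $G$, which hands you a nonzero $\Phi\in UCB(\widehat G)^{\perp}$ annihilated by all of $A(G)$; in the compact case you manufacture a singular state on $VN(G)$ via a free ultrafilter on the infinite set $\widehat G$ and use the minimal central idempotents $p_\pi$ to kill every module map. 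What your approach buys is a uniform argument that avoids both Zelmanov's theorem and the codimension problem for $B(G)/A(G)$ (which the authors could only resolve for restricted classes of groups, forcing their case analysis); what it costs is the external input $UCB(\widehat G)\neq VN(G)$ for non-compact $G$, which is indeed known but carries real weight, so cite it precisely. Two small points to tighten: in the compact case Proposition \ref{prop-proj} produces $T$ with values in $A(G)_+=A(G)\oplus\mathbb{C}e$, and the coordinate projection onto $A(G)$ is \emph{not} a module map, so rather than ``simplifying the target'' you should either note that $A(G)^{**}$ is a unital, hence essential, module over the unital algebra $A(G)$ (so the essential case of Proposition \ref{prop-woods} gives a lifting into $A(G)\widehat{\otimes}A(G)^{**}$ and hence $T$ valued in $A(G)$), or argue directly that $p_\pi\cdot(a+\lambda e)=a_\pi+\lambda p_\pi=0$ for all $\pi$ forces $\lambda=0$ by $\ell^1$-summability over the infinitely many blocks and then $a=0$.
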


\begin{proof}
We prove the theorem's statement in several steps:

{\it Case I}: $G$ is compact. By a result of Zelmanov (\cite{Z}), $G$ contains an infinite, abelian compact 
subgroup $H$. Thus, from Lemma \ref{L:sep property}, $A(H)^{**}$ is projective in $A(H)$-\m\ which is impossible 
by \cite[Theorem 2.7]{DP04}.

{\it Case II}: $G$ is either connected or discrete. By the preceding part, we just need to 
consider the case where $G$ is not compact. Let $\{e_\alpha \}$ be a bounded approximate identity
in $A(G)$ bounded by 1. Let $E$ be the w$^*$-cluster point of $\{e_\alpha \}$ in $A(G)^{**}$, so 
that $E$ is a right identity in $A(G)^{**}$ and $\|E\|=1$. The algebra $A(G)^{**}$ is a completely
contractive symmetric Banach $B(G)$-module, and the mapping 
$$ f\mapsto f\cdot E \ , \ B(G) \to A(G)^{**}$$
is a complete isometry. Indeed, if we let 
$$I=\{ F\in A(G)^{**} : F=0 \ \text{on} \ C^*(G) \},$$
then $I$ is a closed two-sided (completely) complemented ideal in $A(G)^{**}$ and
$$A(G)^{**}\cong B(G)\oplus I.$$   
Put $J=\overline{A(G)I}$. We know that, from Cohen's factorization theorem 
\cite[Corollary 2.9.26]{D}, $J=A(G)I$. Clearly $A(G)A(G)^{**}\subseteq A(G)\oplus J$. 
Since from Theorem 
\ref{T:finite-co-dim}, $\dim (B(G)/A(G))$ is at least 2, there is $f\in B(G)$ such that 
$$\{f+A(G) , 1+A(G)\}$$ are linearly independent in $B(G)/A(G)$. Set $E_1=f\cdot E$. It follows that,
for every $g\in A(G)$,
$$ g\cdot E_1=(gf)\cdot E=gf.$$
Thus
$$ A(G)\cdot E_1 \subseteq A(G).$$
We show that $\{E+A(G)\oplus J , E_1+A(G)\oplus J\}$ are linearly independent. Suppose otherwise
so that there is a complex number $\beta\neq 0$ such that
$$(f-\beta 1_G)\cdot E \in A(G)\oplus J.$$ 
So $(f-\beta 1_G)\cdot E=h+F$, where $h\in A(G)$ and $F\in J$. Thus, for every $g\in A(G)$,
$$gh+g\cdot F=(f-\beta 1_G)\cdot (g\cdot E)=(f-\beta 1_G)g \in A(G).$$
This implies that $g\cdot F\in A(G)\cap J=\{0\}$. Therefore $F=0$ because
$J$ is an essential module on $A(G)$, and so, $F=\lim_{\alpha \to \infty} e_\alpha \cdot F=0$.
Hence $f-\beta 1_G=h \in A(G)$ which is impossible. It follows from the quantization of 
\cite[Corollary 1.4]{DP04} that $A(G)^{**}$ is not operator projective in $A(G)$-\m.

Now consider the general case. Let $G_e$ denote the connected component of identity $e$ in $G$.
Suppose that $A(G)^{**}$ is operator projective in $A(G)$-\m. Since $G_e$ is a closed normal subgroup 
of $G$, by Lemma \ref{L:sep property}, $A(G_e)^{**}$ is operator projective in $A(G_e)$-\m.
It follows from case II that $G_e=\{ e \}$ i.e. $G$ is totally disconnected. Let $H$ be a compact open
subgroup of $G$. Again by applying Lemma \ref{L:sep property}, we have that $A(H)^{**}$ is 
operator projective in $A(H)$-\m. Hence, from Case I, $H$ must be finite, and so, $G$ must be discrete
since $H$ is open in $G$. Therefore $G$ must be an infinite, discrete amenable group, But this 
contradicts Case II. Thus $A(G)^{**}$ is not operator projective in $A(G)$-\m. 

\end{proof}

\begin{thm}\label{T:proj-second dual}
Let $G$ be a locally compact group such that $A(G)^{**}$ is operator projective in $A(G)$-\m.
Then $G$ is discrete and contains no infinite amenable subgroup.
\end{thm}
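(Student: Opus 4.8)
The plan is to combine the two structural facts already established---Proposition \ref{P:second dual-amenable}, which says infinite amenable groups cannot have operator projective $A(G)^{**}$, and Lemma \ref{L:sep property}, which transfers operator projectivity of $A(G)^{**}$ down to closed subgroups enjoying the separation property. The theorem asserts two things about a group $G$ with $A(G)^{**}$ operator projective: that $G$ is discrete, and that $G$ contains no infinite amenable subgroup. I would treat these as two separate deductions, both driven by the same hereditary mechanism.

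For the second assertion, suppose toward a contradiction that $H$ is an infinite amenable subgroup of $G$. The subtlety is that Lemma \ref{L:sep property} requires the separation property, which by the discussion preceding that lemma holds when $H$ is open, compact, or normal. An arbitrary amenable subgroup need not be any of these, so I cannot apply the lemma to $H$ directly. The natural fix is to pass first to the closure $\overline{H}$, which is a closed amenable subgroup (amenability passes to closures), and then to locate inside $\overline{H}$ a subgroup to which the separation property does apply. Because $\overline{H}$ is infinite and amenable, one can find inside it either an infinite compact subgroup (if $\overline{H}$ is compact, via Zelmanov's result as used in Case I of Proposition \ref{P:second dual-amenable}) or, in the non-compact case, invoke the structural analysis of that same proposition. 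In fact the cleanest route is to observe that Proposition \ref{P:second dual-amenable} as proved already handles arbitrary infinite amenable $G$, so it suffices to produce a \emph{closed} infinite amenable subgroup $K$ of $G$ possessing the separation property; then Lemma \ref{L:sep property} gives that $A(K)^{**}$ is operator projective, contradicting Proposition \ref{P:second dual-amenable} applied to $K$.

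For discreteness, I would argue that $G_e$, the connected component of the identity, must be trivial and that $G$ can have no small subgroups. Since $G_e$ is a closed normal subgroup, Lemma \ref{L:sep property} applies and forces $A(G_e)^{**}$ to be operator projective; as $G_e$ is connected and amenable (connected groups that are not discrete are covered by Case II of the preceding proposition, and a nontrivial connected group is infinite and amenable only in the relevant compact-or-connected sense), Proposition \ref{P:second dual-amenable} forces $G_e=\{e\}$, so $G$ is totally disconnected. A totally disconnected locally compact group has a compact open subgroup $H$; since $H$ is both open and compact it has the separation property, so $A(H)^{**}$ is operator projective, whence by Case I of Proposition \ref{P:second dual-amenable} the compact group $H$ must be finite. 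A finite open subgroup forces $G$ to be discrete.

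The main obstacle I anticipate is the separation-property bookkeeping in the amenable-subgroup step: bridging from an arbitrary (possibly non-closed, non-normal, non-open, non-compact) infinite amenable subgroup to one for which Lemma \ref{L:sep property} is actually applicable. The discreteness half is essentially a repackaging of the endgame already carried out inside the proof of Proposition \ref{P:second dual-amenable}, so I expect it to be routine; the genuine work lies in the hereditary transfer and in confirming that the subgroup one lands on is both infinite and covered by the separation results cited from \cite{LL}.
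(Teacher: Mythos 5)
There is a genuine gap, and it sits exactly where you wave your hands: the claim that Proposition \ref{P:second dual-amenable} forces $G_e=\{e\}$. That proposition is stated and proved only for \emph{amenable} groups (its proof needs a bounded approximate identity in $A(G)$, hence Leptin's theorem), whereas a nontrivial connected locally compact group need not be amenable --- $SL(2,\mathbb{R})$ is connected, infinite, and not amenable, so neither the statement nor Case II of the proof of Proposition \ref{P:second dual-amenable} applies to it. Your parenthetical ``connected groups that are not discrete are covered by Case II'' conflates the two hypotheses of that case (connected \emph{and} amenable) with connectedness alone. The paper's proof spends its entire first paragraph closing precisely this gap: it shows a connected $G$ with $A(G)^{**}$ operator projective is a Lie group (via a compact normal $K$ with $G/K$ Lie, finiteness of $K$ from the lemma and proposition, and a no-small-subgroups argument), then kills the radical $R$ (which \emph{is} amenable, being connected and solvable, and is normal, so Lemma \ref{L:sep property} applies), leaving a connected semisimple Lie group, and finally invokes the Iwasawa decomposition to exhibit an infinite \emph{compact} subgroup --- compact subgroups have the separation property and are amenable, so Proposition \ref{P:second dual-amenable} gives the contradiction. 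None of this Lie-theoretic reduction appears in your outline, and without it the discreteness half does not go through.

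On the no-infinite-amenable-subgroup half, you correctly diagnose the obstacle (an arbitrary amenable subgroup need not be open, compact, or normal, so Lemma \ref{L:sep property} is not directly available) but you do not resolve it; passing to the closure does not help, since a closed amenable subgroup of a general locally compact group still need not enjoy the separation property. The paper's resolution is simply a matter of ordering: establish discreteness \emph{first}, after which every subgroup of $G$ is open, hence closed with the separation property, and Lemma \ref{L:sep property} together with Proposition \ref{P:second dual-amenable} finishes immediately. Your proposal runs the two halves in the opposite order and so leaves this step dangling. The remainder of your discreteness endgame (total disconnectedness, compact open subgroup, Case I forces it to be finite) does match the paper.
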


\begin{proof}
First consider the case when $G$ is connected. Let $K$ be a compact normal subgroup of $G$ such that
$G/K$ is a Lie group. By Lemma \ref{L:sep property} and Proposition \ref{P:second dual-amenable},
$K$ is finite. Since $G/K$ is a Lie group, it has no ``small subgroup" i.e. there is an open
set $U$ of $G$ containing $K$ such that $K$ is the unique closed subgroup of $G/K$ contained in $U/K$. Let $V$ be an open
set contaning the identity $e$ such that 
$$V\cap K=\{e\} \ \text{and} \ \ VK\subseteq U.$$
This is possible since $K$ is finite. We claim that $\{ e\}$ is the unique closed subgroup of $G$ contained in $V$.
To see this, let $H$ be a closed subgroup of $G$ contained in $V$. Then $HK\subseteq U$ and 
$$ HK/K\cong H/H\cap K=H.$$
Therefore $H=\{e\}$. This implies that $G$ is a connected Lie group.

Let $R$ be the radical of $G$ i.e. the largest connected, solvable (closed) subgroup of 
$G$. Since $R$ is amenable and normal, it follows from Lemma \ref{L:sep property} and Proposition
\ref{P:second dual-amenable} that $R=\{e\}$. Hence $G$ is a connected semisimple Lie group.
It follows from the Iwasawa's decomposition theorem that if $G$ is non-trival, it
contains an infinite compact subgroup. However this is not possible because of 
Lemma \ref{L:sep property} and Proposition \ref{P:second dual-amenable}. Thus $G$ is a trivial group.   

Now consider the general case. Let $G_e$ be the connected component of the identity $\{e\}$ in $G$.
By Lemma \ref{L:sep property} and Proposition \ref{P:second dual-amenable}, $A(G_e)^{**}$ is operator projective in $A(G_e)$-\m. Hence, from the preceding part, $G_e=\{e\}$ i.e. $G$ is totally disconnected. A similar argument
to the one made in the proof of Proposition \ref{P:second dual-amenable} shows that $G$ must be discrete.
Finally since every subgroup of a discrete group is open, it follows again from Lemma \ref{L:sep property} and Proposition \ref{P:second dual-amenable} that $G$ contains no infinite amenable subgroup.
\end{proof}

\subsection{Operator Projectivity of $UCB(\widehat{G})^*$}

Finally we investigate the operator projectivity of $UCB(\widehat{G})^*$ in $A(G)$-\m.
The results in this section would be analogous to those of the preceding section. 
Indeed, our main theorem is the following which is similar to Theorem \ref{T:proj-second dual}:

\begin{thm}\label{T:proj-UCB dual}
Let $G$ be a locally compact group such that $UCB(\widehat{G})^*$ is operator projective in $A(G)$-\m.
Then $G$ is discrete and contains no infinite amenable subgroup.
\end{thm}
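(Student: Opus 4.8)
The plan is to follow almost exactly the architecture of the proof of Theorem \ref{T:proj-second dual}, replacing $A(G)^{**}$ by $UCB(\widehat{G})^*$ throughout, and to build whatever supporting lemmas are needed so that the same two engines---a hereditary descent lemma along subgroups with the separation property, and an obstruction result for infinite amenable groups---are available in this new setting. Concretely, I first need the analogue of Lemma \ref{L:sep property}: if $G$ has the $H$-separation property and $UCB(\widehat{G})^*$ is operator projective in $A(G)$-\m, then $UCB(\widehat{H})^*$ is operator projective in $A(H)$-\m. The same diagram chase should work, provided the norm-one, completely bounded, $A(G)$-module projection $P:VN(G)\to VN(H)$ supplied by \cite[Proposition 3.1]{KL} restricts/corestricts appropriately to the $UCB$-level; that is, I would check that $P$ carries $UCB(\widehat{G})$ into $UCB(\widehat{H})$, so that dualizing gives $P^*:UCB(\widehat{H})^*\to UCB(\widehat{G})^*$, and that the restriction map $R:A(G)\to A(H)$ together with $R^{**}$ (here the relevant map on the duals of the $UCB$ spaces) fit into the commuting square with $\rho_G$ and the extended multiplication maps $(\pi_G)_+$, $(\pi_H)_+$. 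Defining $\rho_H:=(R_+\otimes R^{\ast})\circ\rho_G\circ P^*$ and running the identical four-line computation $(\pi_H)_+\circ\rho_H=\mathrm{id}$ then yields the descent.

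Next I would prove the obstruction, the analogue of Proposition \ref{P:second dual-amenable}: if $G$ is an infinite amenable group, then $UCB(\widehat{G})^*$ is not operator projective in $A(G)$-\m. The same case division (compact; connected or discrete; general) should go through. For the compact case I again invoke Zelmanov's theorem to reduce to an infinite abelian compact subgroup $H$ and cite the $L^1$-level result \cite[Theorem 2.7]{DP04} after the descent lemma, noting that for abelian $G$ the space $UCB(\widehat{G})$ is the classical $UCB$-predual object so the Dales--Polyakov obstruction applies. For the connected/discrete noncompact case I would reproduce the construction using the right identity $E$ obtained as a w$^*$-cluster point of a bounded approximate identity, the decomposition $A(G)^{**}\cong B(G)\oplus I$ and its $UCB$-level counterpart, and the element $E_1=f\cdot E$ built from an $f\in B(G)$ with $\{f+A(G),1+A(G)\}$ independent, using $\dim(B(G)/A(G))\geq 2$ from Theorem \ref{T:finite-co-dim}; the linear-independence argument concludes via the quantization of \cite[Corollary 1.4]{DP04}. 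The general case then reduces to these via $G_e$ and a compact open subgroup exactly as before.

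With both tools in hand, the theorem itself is proved by copying the proof of Theorem \ref{T:proj-second dual} verbatim. I first treat $G$ connected: a compact normal $K$ with $G/K$ a Lie group forces $K$ finite by descent and the obstruction, the no-small-subgroups argument forces $G$ to be a connected Lie group, killing the radical $R$ gives $G$ semisimple, and Iwasawa plus the obstruction (applied to an infinite compact subgroup) forces $G$ trivial. The general case then passes to $G_e$ to get total disconnectedness, repeats the argument of Proposition \ref{P:second dual-amenable} to get discreteness, and finally uses that every subgroup of a discrete group is open to rule out infinite amenable subgroups.

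The main obstacle I anticipate is not the large-scale logic, which is structurally identical to the second-dual case, but verifying that the $UCB$ space interacts correctly with the separation-property machinery, i.e. that $P$ and $R$ genuinely induce the maps $P^*$ and $R^{\ast}$ at the level of $UCB(\widehat{H})^*$ and $UCB(\widehat{G})^*$ and that the extended multiplication maps $(\pi_G)_+,(\pi_H)_+$ land in the correct quotient spaces. In particular I would need to confirm that $P\big(UCB(\widehat{G})\big)\subseteq UCB(\widehat{H})$ and that essentiality of the relevant submodule (used to force $F=0$ via $F=\lim_\alpha e_\alpha\cdot F$) still holds for $UCB(\widehat{G})^*$; these module-theoretic compatibilities are the one place where the proof is not a pure transcription of the earlier argument.
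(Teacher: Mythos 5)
Your proposal follows essentially the same route as the paper: the authors likewise establish the $UCB(\widehat{G})^*$ analogue of Lemma \ref{L:sep property} by verifying that $P$ maps $UCB(\widehat{G})$ onto $UCB(\widehat{H})$ and $R^*$ maps $UCB(\widehat{H})$ into $UCB(\widehat{G})$, then transfer Proposition \ref{P:second dual-amenable} using the decomposition $UCB(\widehat{G})^*\cong B(G)\oplus I$ with $I=\{F\in UCB(\widehat{G})^*: F=0 \text{ on } C^*(G)\}$, and finally rerun the proof of Theorem \ref{T:proj-second dual} verbatim. The compatibility checks you flag as the main obstacles are exactly the points the paper singles out as the needed modifications, so your plan is a faithful reconstruction of their argument.
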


\begin{proof}
Let $H$ be a closed subgroup of $G$ such that $G$ has $H$-separation property, and let $P$ and $R$ be 
the mapping considered in the proof of Lemma \ref{L:sep property}. It is routine to verify that
$P$ maps $UCB(\widehat{G})$ onto $UCB(\widehat{H})$ and $R^*$ maps $UCB(\widehat{H})$ into
$UCB(\widehat{G})$. Thus the analogous of Lemma \ref{L:sep property} holds in the case of $UCB(\widehat{G})^*$
if we modify its proof.

Similarly, Proposition \ref{P:second dual-amenable} also holds if we replace $A(G)^{**}$ with $UCB(\widehat{G})^*$.
Indeed, the proof will go through exactly the same considering the fact that, for amenable $G$, 
$$I=\{ F\in UCB(\widehat{G})^* : F=0 \ \text{on} \ C^*(G) \},$$
is a closed two-sided (completely) complemented ideal in $UCB(\widehat{G})^*$ and
$$UCB(\widehat{G})^* \cong B(G)\oplus I.$$  
The final result follows if we modify the proof of Theorem \ref{T:proj-second dual} for
$UCB(\widehat{G})^*$ instead of $A(G)^{**}$.
\end{proof}

\section{The modules $C^*_r(G)$, $UCB(\widehat{G})$ and $VN(G)$}\label{C*r(G)VN(G)}

Since $A(G)^* = VN(G)$, we have a dual $A(G)$-module structure on $VN(G)$ given by
	$$\left\langle f \cdot a, g \right\rangle := \left\langle a, g f\right\rangle$$
for any $f,g \in A(G)$ and $a\in VN(G)$. In particular, for $a = L_h$ with $h\in L^1(G)$ we have
	$$f \cdot L_h = L_{f h},$$
where $L_h$ is the left convolution with respect to $h$ on $L^2(G)$.
Thus, this gives us an $A(G)$-module structure on $C^*_r(G)$.
Moreover, since $UCB(\widehat{G}) = \overline{A(G)\cdot VN(G)}$, we get an $A(G)$-module structure given by
	$$f\cdot(g\cdot a) = (f g)\cdot a$$
for any $f,g\in A(G)$ and $a\in VN(G)$.
Clearly, $C^*_r(G)$ and $UCB(\widehat{G})$ are essential as $A(G)$-modules.

First, we consider the cases where projectivity holds.
It is well known that $A(G)$ is an operator biprojective completely contractive Banach algebra when $G$ is discrete (\cite{W02})
and $A(G)$ has a b.a.i. when $G$ is amenable \cite{Lep}.
Thus, we can apply Theorem \ref{thm-essential} to get the following result.
	\begin{thm}\label{thm-pos-Cr(G)}
	Let $G$ be a discrete and amenable group. Then $C^*_r(G)$ and $UCB(\widehat{G})$ are operator projective in $A(G)$-{\bf mod}.
	\end{thm}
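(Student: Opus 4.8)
The plan is to recognize this as a direct application of Theorem \ref{thm-essential}, so the work lies entirely in verifying that its three hypotheses hold for the algebra $A(G)$ and each of the two modules. First I would fix the algebra: $A(G)$ is a completely contractive Banach algebra, and because $G$ is discrete it is operator biprojective by Wood's result \cite{W02}, as recorded in the discussion preceding the statement. Second, since $G$ is amenable, Leptin's theorem \cite{Lep} supplies a bounded approximate identity for $A(G)$. These two facts together put $A(G)$ into exactly the class of algebras to which Theorem \ref{thm-essential} applies.

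The remaining hypothesis is that the modules in question are essential completely bounded left $A(G)$-modules. For $UCB(\widehat{G})$ this is immediate from its definition as $\overline{A(G)\cdot VN(G)}$, and for $C^*_r(G)$ it follows from the identity $f\cdot L_h = L_{fh}$ together with the fact that $C^*_r(G)$ is the norm closure of $\{L_h : h\in L^1(G)\}$; both observations are already made in the opening paragraphs of this section, where it is noted outright that $C^*_r(G)$ and $UCB(\widehat{G})$ are essential as $A(G)$-modules. With essentiality in hand for each module, I would simply invoke Theorem \ref{thm-essential} once for $X = C^*_r(G)$ and once for $X = UCB(\widehat{G})$ to conclude that each is operator projective in $A(G)$-\m.

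There is no genuine obstacle at this final step: the substantive content has been front-loaded into Theorem \ref{thm-essential} (which itself rests on the preceding proposition identifying $\pi_A : A\widehat{\otimes}_A X \to X$ as a complete isomorphism under a b.a.i., combined with \cite[Lemma 3.23]{W02}), and into the two cited structural results on $A(G)$. Accordingly, I would keep the proof to a few lines, merely assembling the verified hypotheses and citing Theorem \ref{thm-essential}. The only point worth a sentence of care is making explicit \emph{why} each module is essential, so that the reader is not left to reconstruct the two short density arguments; everything else is a direct appeal to the machinery already developed.
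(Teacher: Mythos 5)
Your proposal is correct and follows exactly the paper's own argument: the paper likewise notes that $A(G)$ is operator biprojective for discrete $G$ (Wood), has a b.a.i.\ for amenable $G$ (Leptin), observes that $C^*_r(G)$ and $UCB(\widehat{G})$ are essential, and then invokes Theorem \ref{thm-essential}. Your added care in spelling out why each module is essential is a harmless elaboration of what the paper leaves as ``clearly.''
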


Now we focus on the cases where projectivity fails. The following information about the module mapping spaces is the key step to the proof. 
	\begin{prop}\label{prop-trivial-mapping-space}
	Let $G$ be a non-discrete group. Then
		$${}_{A(G)}B(C^*_r(G), A(G)) = 0.$$
	\end{prop}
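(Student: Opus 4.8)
The plan is to show that every $T\in{}_{A(G)}B(C^*_r(G),A(G))$ is forced to vanish, by extracting from $T$ a family of ``eigen-functionals'' and then exploiting two features of a non-discrete group: that $A(G)$ separates the points of $G$, and that $G$ has no isolated points. First I would fix such a $T$ and, for each $x\in G$, introduce the bounded functional $\phi_x\in C^*_r(G)^*$ given by composing $T$ with evaluation at $x$, namely $\phi_x(a)=\la T(a),\lambda(x)\ra=T(a)(x)$, where $\lambda(x)\in VN(G)=A(G)^*$ implements point evaluation on $A(G)$. Because $T$ is a left $A(G)$-module map, $A(G)$ acts on the image by pointwise multiplication while it acts on $C^*_r(G)$ by $f\cdot\lambda(h)=\lambda(fh)$ (as recorded at the start of this section, with $L_h=\lambda(h)$). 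Evaluating the module identity $T(f\cdot a)=f\,T(a)$ at $x$ and using $(fu)(x)=f(x)u(x)$ yields the eigen-relation $\phi_x(f\cdot a)=f(x)\,\phi_x(a)$ for all $f\in A(G)$ and $a\in C^*_r(G)$.

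Next I would realize $\phi_x$ concretely as a continuous function. Since $C^*_r(G)$ is the closure of $\lambda(L^1(G))$, the functional $\phi_x$ is represented on this dense subspace by integration against a bounded continuous function $\psi_x$ on $G$ (indeed $\psi_x$ lies in $B(G)$, being a coefficient function of a representation weakly contained in $\lambda$), so that $\phi_x(\lambda(h))=\int_G h\,\psi_x$ for every $h\in L^1(G)$. Feeding $a=\lambda(h)$ into the eigen-relation and using $f\cdot\lambda(h)=\lambda(fh)$ gives $\int_G h\,f\psi_x=f(x)\int_G h\,\psi_x$ for all $h\in L^1(G)$, whence $(f-f(x))\psi_x=0$ as an element of $L^\infty(G)$. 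As both $f$ and $\psi_x$ are continuous and left Haar measure has full support, this upgrades to the pointwise identity $(f(y)-f(x))\psi_x(y)=0$ for every $y\in G$.

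Finally I would conclude. Because $A(G)$ separates the points of $G$, for each $y\neq x$ there is $f\in A(G)$ with $f(y)\neq f(x)$, which forces $\psi_x(y)=0$; thus $\psi_x$ vanishes off the single point $x$. Since $G$ is non-discrete it has no isolated points (an isolated point would, by translation, make every point isolated), so $x$ lies in the closure of $G\setminus\{x\}$, and continuity of $\psi_x$ gives $\psi_x(x)=0$ as well. Hence $\phi_x=0$ for every $x$, so $T(a)(x)=0$ for all $a$ and all $x$; as elements of $A(G)$ are genuine functions on $G$, this means $T(a)=0$, i.e.\ $T=0$.

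The step I expect to demand the most care is the last one: the passage from ``$\psi_x$ vanishes off the single point $x$'' to ``$\psi_x\equiv0$'' is precisely where non-discreteness is used, via the continuity of coefficient functions in $B(G)$ together with the absence of isolated points. A secondary technical point is the move from the a.e.\ identity $(f-f(x))\psi_x=0$ in $L^\infty(G)$ to the pointwise identity, which again hinges on continuity and on Haar measure having full support.
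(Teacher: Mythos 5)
Your proof is correct, but it follows a genuinely different route from the paper's. The paper argues ``spatially'': it fixes an open, relatively compact set $K$ of positive measure, sets $g=T(L_{1_K})$, uses the module property and a density/a.e.-convergence argument to get $T(L_{f1_K})=fg$ for all $f\in L^1(G)$, and then kills $g$ on $K$ by observing that $f\mapsto fg$ would otherwise fail to be bounded from $L^1(G)$ into $L^\infty(G)$ --- here non-discreteness enters as ``points have Haar measure zero,'' so one can concentrate $L^1$-mass near a point where $g\neq 0$. You instead dualize: you compose $T$ with the point evaluations $\lambda(x)\in VN(G)=A(G)^*$, identify the resulting functionals $\phi_x\in C^*_r(G)^*$ with continuous functions $\psi_x\in B_\lambda(G)\subseteq B(G)$ via Eymard's duality $\phi_x(\lambda(h))=\int_G h\psi_x$, derive the eigen-relation $(f-f(x))\psi_x\equiv 0$ from the module property, and conclude that $\psi_x$ is supported at the single point $x$ and hence vanishes because a non-discrete group has no isolated points. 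Both arguments use the same two inputs (the module identity $f\cdot L_h=L_{fh}$ and non-discreteness), but they use non-discreteness in equivalent yet differently packaged forms. Your version buys a cleaner conclusion --- it avoids the paper's passage to an a.e.-convergent subsequence and the $L^1$--$L^\infty$ boundedness estimate --- at the cost of invoking the identification $C^*_r(G)^*=B_\lambda(G)$ with its elements realized as continuous functions, which is standard but is an extra piece of structure the paper's proof does not need. Two small points worth making explicit if you write this up: the relation $f\cdot\lambda(h)=\lambda(fh)$ requires $fh\in L^1(G)$, which holds since $f\in A(G)$ is bounded; and having shown $\phi_x=0$ on the dense subspace $\lambda(L^1(G))$, you should note that boundedness of $\phi_x$ extends this to all of $C^*_r(G)$.
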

\begin{proof}
We pick any $T \in {}_{A(G)}B(C^*_r(G) , A(G))$, and we claim that $g := T(L_{1_K}) = 0$
for any fixed open subset $K$ of $G$ with compact closure and with positive measure.
First, since $T$ is a left module map, we can observe that
	\begin{equation}\label{eq1}
	T(L_{f 1_K}) = T(f \cdot L_{1_K}) = f g
	\end{equation}
for any $f \in A(G)\cap L^1(G)$.
Actually, we can extend the identity in \eqref{eq1} for any $f \in L^1(G)$ since $A(G)\cap L^1(G)$ is dense in $L^1(G)$.
Indeed, we choose a sequence of functions $(f_i)_{i\ge 1} \subseteq A(G) \cap L^1(G)$ converging to $f$ in $L^1(G)$.
Then we have
	$$L_{f_i 1_K} \stackrel{C^*_r(G)}{\longrightarrow} L_{f 1_K}$$
and consequently
	$$f_i g = T(L_{f_i 1_K}) \stackrel{A(G)}{\longrightarrow} T(L_{f 1_K}).$$
By passing to an appropriate subsequence we can assume that $f_i$ is converging to $f$ almost everywhere.
Thus, we can conclude that
	\begin{equation}\label{eq2}
	T(L_{f 1_K}) = f g\;\text{a.e.}
	\end{equation}
for any $f\in L^1(G)$.

Now we consider the following composition of bounded maps:
	$$\Phi : L^1(G) \stackrel{\lambda}{\longrightarrow} C^*_r(G) \stackrel{T}{\longrightarrow} A(G) \stackrel{j}{\hookrightarrow} L^{\infty}(G),$$
where $\lambda$ is the left regular representation on $L^1(G)$ and $j$ is the canonical embedding. Then, we have
	$$\Phi(f) = f g$$
for any $f\in C_c(G)$ with $\text{supp}(f) \subseteq K$, which implies that
	$$g=0 \;\text{on $K$}.$$
Indeed, if $g(x) \neq 0$ at some point $x\in K$,
then we can always find $f$ which has norm 1 in $L^1(G)$ and $\text{supp}(f) \subseteq K$ but $f(x)$ is arbitrarily large.
On the other hand, by \eqref{eq2} with $f = 1_K$ we have
	$$g = T(L_{1_K}) = g 1_K,$$
so that $g = 0$ on $G\backslash K$. Consequently $T(L_{1_K}) = g \equiv 0$.

Now we pick any non-zero $f \in C_c(G)$ and set $K = \{ x\in G : f(x)\neq 0\}$.
Then since $G$ is not discrete, $K$ is an open set with compact closure and positive measure.
Thus, from the above calculation and \eqref{eq2} we have
	$$T(L_f) = T(L_{f 1_K}) = f g = 0.$$
By a standard density argument we have $T = 0$, so that
	$${}_{A(G)}B(C^*_r(G), A(G)) = 0.$$
\end{proof}

We also need the following transference result.
	\begin{lem}\label{lem-opensubgp}
	Let $H$ be an open subgroup of a locally compact group $G$.
	If $C^*_r(G)$ (resp. $UCB(\widehat{G})$, $VN(G)$) is operator projective in $A(G)$-{\bf mod},
	then $C^*_r(H)$ (resp. $UCB(\widehat{H})$, $VN(H)$) is operator projective in $A(H)$-{\bf mod}.
	\end{lem}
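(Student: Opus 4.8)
The plan is to transfer operator projectivity by pulling back the right inverse supplied by Woods' criterion (Proposition \ref{prop-woods}), exactly as in the proof of Lemma \ref{L:sep property}, but now for the concrete modules sitting inside $VN(G)$ rather than for $A(G)^{**}$. Write $X_G$ for whichever of $C^*_r(G)$, $UCB(\widehat{G})$, $VN(G)$ is under consideration and $X_H$ for its $H$-counterpart. Since $H$ is open, $G$ has the $H$-separation property, so the construction in Lemma \ref{L:sep property} supplies a norm-one, completely bounded projection $P : VN(G) \to VN(H)$ that is an $A(G)$-module homomorphism, where $VN(H)$ carries the $A(G)$-action $a \cdot S = R(a)\cdot S$ through the restriction homomorphism $R : A(G) \to A(H)$. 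The adjoint $R^* : VN(H) \to VN(G)$ is the canonical inclusion, and $P \circ R^* = \mathrm{id}_{VN(H)}$ because $P$ restricts to the identity on $VN(H)$.

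First I would assume $X_G$ is operator projective, so by Proposition \ref{prop-woods} there is a completely bounded $A(G)$-module map $\rho_G : X_G \to A(G)_+ \prt X_G$ with $(\pi_{X_G})_+ \circ \rho_G = \mathrm{id}_{X_G}$, and set
$$\rho_H := (R_+ \otimes P) \circ \rho_G \circ R^* : X_H \to A(H)_+ \prt X_H,$$
where $R_+ : A(G)_+ \to A(H)_+$ extends $R$ by the identity on $\mathbb{C}$ and $P$ is understood restricted to $X_G$. The key identity is $(\pi_{X_H})_+ \circ (R_+ \otimes P) = P \circ (\pi_{X_G})_+$, which on an elementary tensor $a\otimes T$ reduces precisely to the module-homomorphism property $P(a\cdot T) = R(a)\cdot P(T)$. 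Granting this,
$$(\pi_{X_H})_+ \circ \rho_H = P \circ (\pi_{X_G})_+ \circ \rho_G \circ R^* = P \circ R^* = \mathrm{id}_{X_H},$$
so $\rho_H$ is a right inverse of $(\pi_{X_H})_+$.

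Next I would verify that $\rho_H$ is an $A(H)$-module map. For $b \in A(H)$ choose, using surjectivity of $R$, a lift $\tilde b \in A(G)$ with $R(\tilde b) = b$; a short computation with the adjoint shows $R^*(b\cdot\xi) = \tilde b \cdot R^*(\xi)$. Since $\rho_G$ is an $A(G)$-module map and $R$ is multiplicative with $R(\tilde b)=b$, applying $R_+ \otimes P$ reproduces the scalar factor $b$ and the choice of lift drops out, giving $\rho_H(b\cdot\xi) = b\cdot\rho_H(\xi)$. By Proposition \ref{prop-woods} this proves $X_H$ is operator projective in $A(H)$-\m. For the essential modules $C^*_r(G)$ and $UCB(\widehat{G})$ one may equally use the non-unital form of Proposition \ref{prop-woods}.

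The remaining, and main, point is that $P$ and $R^*$ actually carry the relevant submodules into one another, so that $\rho_H$ is well defined on $X_H$ and lands in $A(H)_+ \prt X_H$; this is immediate for $VN(\cdot)$ but requires verification for the two $C^*$-type modules. For $R^*$ it follows from the completely isometric inclusion $C^*(H) \hookrightarrow C^*(G)$ for open $H$ (cf. \cite[Section 5]{BKLS}), which identifies $C^*_r(H)$ with a $C^*$-subalgebra of $C^*_r(G)$ by sending $\lambda_H(f)$ to $\lambda_G(\tilde f)$ for the zero-extension $\tilde f$, together with the fact that $R^*$ maps $UCB(\widehat{H})$ into $UCB(\widehat{G})$. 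For $P$ I would use that, for open $H$, the separation-property projection is the normal conditional expectation determined by $1_H$, so that $P(\lambda_G(f)) = \lambda_H(f|_H)$; this gives $P(C^*_r(G)) \subseteq C^*_r(H)$, and combining $UCB(\widehat{G}) = \overline{A(G)\cdot VN(G)}$ with $P(a\cdot T) = R(a)\cdot P(T)$ and surjectivity of $R$ yields $P(UCB(\widehat{G})) = UCB(\widehat{H})$. Checking these containments — in effect, the compatibility of $P$ and the inclusion with the reduced $C^*$-algebra and with uniform continuity — is the only genuinely technical step; once it is in place, the transfer of the right inverse is purely formal.
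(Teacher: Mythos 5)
Your proposal is correct and follows essentially the same route as the paper: both transfer Woods' right inverse by forming $\rho_H = (\text{restriction} \otimes \text{projection}) \circ \rho_G \circ (\text{inclusion})$ and checking the right-inverse and $A(H)$-module properties. The only difference is in sourcing the two maps: the paper takes them directly as (restrictions of) the adjoints $j_1^*$ and $R_1^*$ of the Forrest--Wood maps $j_1 : A(H) \hookrightarrow A(G)$ (zero extension) and $R_1 : A(G) \to A(H)$ (restriction), whose explicit formulas $L_f \mapsto L_{f|_H}$ and $L_f \mapsto L_{\widetilde{f}}$ make your ``only genuinely technical step'' (the submodule containments) immediate, whereas you detour through the Kaniuth--Lau separation-property projection, which for open $H$ coincides with $j_1^* = 1_H \cdot (\,\cdot\,)$ anyway.
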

\begin{proof}
Since $H$ is an open subgroup we have a completely isometric embedding \cite{ForWood}
	$$j_1 : A(H) \hookrightarrow A(G),\, f \mapsto \widetilde{f},$$
where $\widetilde{f}$ is the extension of $f$ to $G$ by assigning 0 outside of $H$,
and the following restriction map is also a complete contraction \cite{ForWood}.
	$$R_1 : A(G) \rightarrow A(H),\, g \mapsto g|_H.$$
Moreover, we have a completely contractive projection $j^*_1$ and a completely contractive map $R^*_1$ with their restrictions
	$$R_\infty = j^*_1|_{C^*_r(G)} : C^*_r(G) \rightarrow C^*_r(H),\, L_f \mapsto L_{f|_H}$$
and
	$$j_\infty = R^*_1|_{C^*_r(H)} : C^*_r(H) \rightarrow C^*_r(G),\, L_f \mapsto L_{\widetilde{f}}.$$
Now we suppose that $C^*_r(G)$ is operator projective in $A(G)$-{\bf mod}.
Then there is a completely bounded left $A(G)$-module map $\rho_G : C^*_r(G) \rightarrow A(G) \prt C^*_r(G)$,
which is a right inverse of the multiplication map $\pi_G : A(G) \prt C^*_r(G) \rightarrow  C^*_r(G)$.
Moreover, we have the following commutative diagram.

	\begin{equation}\label{diagram1}
	\xymatrix{
	C^*_r(G) \ar@<.5ex>[rr]^{\rho_G} \ar@<.5ex>[d]^{R_\infty}
	& & A(G)\prt C^*_r(G) \ar[d]^{R_1 \otimes R_\infty} \ar@<.5ex>[ll]^{\pi_G}  \\
	C^*_r(H) \ar@<.5ex>[u]^{j_\infty}
	& & A(H)\prt C^*_r(H)
	\ar[ll]^{\pi_H}}
	\end{equation}

If we set $\rho_H = (R_1\otimes R_\infty)\circ \rho_G \circ j_\infty$,
then it is straightforward to check that $\rho_H$ is a completely bounded left $A(H)$-module map, which is a right inverse of $\pi_H$.
The proofs for $UCB(\widehat{G})$ and $VN(G)$ are similar.
\end{proof}

	\begin{thm}\label{thm-notproj-Cr(G)}
	Let $G$ be a non-discrete locally compact group.
	Then $C^*_r(G)$, $UCB(\widehat{G})$, and $VN(G)$ are not operator projective in $A(G)$-{\bf mod}.
	\end{thm}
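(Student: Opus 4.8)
The plan is to reduce the non-discrete case to the key algebraic obstruction already isolated in Proposition \ref{prop-trivial-mapping-space}, namely that ${}_{A(G)}B(C^*_r(G),A(G))=0$ for non-discrete $G$, and then to combine this with the OAP-based separation criterion of Proposition \ref{prop-proj}. First I would treat $C^*_r(G)$ directly. Suppose for contradiction that $C^*_r(G)$ is operator projective in $A(G)$-\textbf{mod}. Since $A(G)$ always has the OAP (it is the predual of $VN(G)$, and $VN(G)$ as well as $A(G)$ are known to have the OAP for every locally compact $G$), Proposition \ref{prop-proj} applies, giving for every non-zero $a\in C^*_r(G)$ a module map $T\in {}_{A(G)}CB(C^*_r(G),A(G)_+)$ with $T(a)\neq 0$. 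The task is then to upgrade the codomain from $A(G)_+$ to $A(G)$ so as to contradict the vanishing of the mapping space.

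The main step is to show that any such $T$ into $A(G)_+=A(G)\oplus\mathbb{C}$ in fact has its $\mathbb{C}$-component forced to vanish, so that $T$ lands inside $A(G)$. I would argue this using the module structure: writing $T=(T_0,\varphi)$ where $T_0\colon C^*_r(G)\to A(G)$ and $\varphi\colon C^*_r(G)\to\mathbb{C}$ are the two components, the left $A(G)$-module property together with the fact that the scalar summand $\mathbb{C}$ carries the \emph{trivial} (zero) $A(G)$-action forces $\varphi(f\cdot a)=f\cdot\varphi(a)=0$ for all $f\in A(G)$, $a\in C^*_r(G)$. Since $C^*_r(G)$ is essential as an $A(G)$-module, $A(G)\cdot C^*_r(G)$ is dense, and continuity of $\varphi$ then gives $\varphi\equiv 0$. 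Hence $T=T_0\in {}_{A(G)}CB(C^*_r(G),A(G))\subseteq {}_{A(G)}B(C^*_r(G),A(G))=0$ by Proposition \ref{prop-trivial-mapping-space}, so $T=0$ and in particular $T(a)=0$, contradicting the separation guaranteed by Proposition \ref{prop-proj}. This contradiction shows $C^*_r(G)$ is not operator projective.

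For $UCB(\widehat{G})$ and $VN(G)$ I would reduce to the $C^*_r(G)$ statement. The cleanest route is the transference Lemma \ref{lem-opensubgp} combined with a structural reduction: if $G$ is non-discrete, one can pass to an open subgroup on which the problem is controlled, but more directly I expect the argument to mirror the $C^*_r(G)$ case verbatim, since the essential module identities $UCB(\widehat{G})=\overline{A(G)\cdot VN(G)}$ and the chain $C^*_r(G)\subseteq UCB(\widehat{G})\subseteq VN(G)$ let one transport the vanishing mapping-space conclusion. Concretely, a module map $T\colon VN(G)\to A(G)_+$ (or $T\colon UCB(\widehat{G})\to A(G)_+$) restricts to $C^*_r(G)$, and the same essentiality argument kills the scalar part, so $T|_{C^*_r(G)}\in {}_{A(G)}B(C^*_r(G),A(G))=0$; one then uses that $C^*_r(G)$ is weak-$*$ dense in $VN(G)$ (and norm-dense considerations inside $UCB(\widehat{G})$) together with the module-map continuity to conclude $T=0$ on the larger module, again contradicting Proposition \ref{prop-proj}.

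The hard part will be the final density/continuity bookkeeping in the $VN(G)$ case, since $C^*_r(G)$ is only weak-$*$ dense in $VN(G)$ rather than norm-dense, so I cannot conclude $T=0$ on $VN(G)$ purely from $T|_{C^*_r(G)}=0$ by norm continuity alone. To handle this I would instead work with $UCB(\widehat{G})$ as the intermediate essential module where the relevant part of $A(G)\cdot VN(G)$ is norm-dense, show non-projectivity there first, and then either invoke that projectivity of $VN(G)$ would descend to its complemented essential submodule $UCB(\widehat{G})$, or appeal to the separation property machinery of Lemma \ref{L:sep property} as used in the $A(G)^{**}$ and $UCB(\widehat{G})^*$ theorems. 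Getting the implication ``$VN(G)$ projective $\Rightarrow$ $UCB(\widehat{G})$ projective'' cleanly, via a completely bounded module projection $VN(G)\to UCB(\widehat{G})$, is the one place where I would need to be careful about whether such a projection exists and is a module map in full generality.
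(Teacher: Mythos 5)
Your overall strategy --- combine Proposition \ref{prop-proj} with the vanishing of ${}_{A(G)}B(C^*_r(G),A(G))$ from Proposition \ref{prop-trivial-mapping-space} --- is the paper's strategy, and your device for killing the scalar component of a module map into $A(G)_+$ (essentiality forces the $\mathbb{C}$-coordinate to vanish) is a perfectly good substitute for the paper's trick of multiplying $T$ by a suitable nonzero function of $A(G)$, at least for essential modules. But there is one genuine gap that breaks the argument as written: you assert that $A(G)$ always has the OAP for every locally compact group $G$. This is not known (and is in fact false in general: the OAP of $A(G)$ is equivalent to the Haagerup--Kraus approximation property of $G$, which fails for groups such as $SL(3,\mathbb{R})$). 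Without OAP for $A(G)$ or for the module, Proposition \ref{prop-proj} simply cannot be invoked, and your contradiction never gets off the ground. The paper avoids this by first using the structure theory to find an almost connected \emph{open} subgroup $H$ of $G$ (necessarily still non-discrete), transferring projectivity down to $A(H)$-\textbf{mod} via Lemma \ref{lem-opensubgp}, and only then applying Proposition \ref{prop-proj}: for almost connected $H$ the von Neumann algebra $VN(H)$ is injective, whence $A(H)$ has OAP. This reduction is not optional; it is the step that legitimizes the use of Proposition \ref{prop-proj}.

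The second, smaller problem is the $VN(G)$ case, where you correctly diagnose that $C^*_r(G)$ is only weak-$*$ dense in $VN(G)$ and that your restriction argument therefore does not yield $T=0$ on $VN(G)$; but the two escape routes you sketch (a completely bounded module projection $VN(G)\to UCB(\widehat{G})$, or descent of projectivity to a complemented essential submodule) are not established and are genuinely problematic. The paper's resolution is simpler and you should note it: Proposition \ref{prop-proj} only requires you to exhibit a \emph{single} nonzero element that no module map into $A(H)_+$ can separate. Taking $x=L_{1_K}$ for $K$ open with compact closure and positive measure, the computation in the proof of Proposition \ref{prop-trivial-mapping-space} uses only the values of $T$ on the elements $L_{f1_K}$, all of which lie in $C^*_r(H)\subseteq VN(H)$, so it applies verbatim to any $T\in{}_{A(H)}B(VN(H),A(H))$ and gives $T(L_{1_K})=0$; combined with the scalar-killing (or multiplication) trick this contradicts the separation conclusion of Proposition \ref{prop-proj} at the point $L_{1_K}$, with no global density argument needed. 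The same device with the element $h\cdot L_{1_K}$, $h\in A(H)$ nonzero and supported in $K$, handles $UCB(\widehat{H})$.
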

\begin{proof}
We first consider $C^*_r(G)$. From the structure theory (\cite[Proposition 12.2.2]{Pa01})
we know that there is an almost connected open subgroup $H$ of $G$.
Since $G$ is non-discrete, $H$ is also non-discrete, and since $VN(H)$ is injective (\cite{Pat}), $A(H)$ has OAP.
Now we suppose $C^*_r(G)$ is operator projective in $A(G)$-{\bf mod},
then by Lemma \ref{lem-opensubgp}, $C^*_r(H)$ is operator projective in $A(H)$-{\bf mod}.
By Proposition \ref{prop-proj} for any non-zero element $x\in C^*_r(H)$ we can find a map $T \in {}_{A(H)} CB(C^*_r(H), A(H)_+)$ such that $T(x)\neq 0$.
By multiplying an appropriate non-zero function in $A(H)$
we can actually find $T' \in {}_{A(H)} CB(C^*_r(H), A(H))$ such that $T'(x)\neq 0$.
However, this is impossible by Proposition \ref{prop-trivial-mapping-space}.

The case of $VN(G)$ is similar. Let $H$ be again an almost connected open subgroup of $G$.
We fix an open subset $K$ of $H$ with compact closure and with positive measure.
Then, it can be shown that $T(L_{1_K}) = 0$ for any $T \in {}_{A(H)} B(VN(H), A(H))$ as in the proof of Proposition \ref{prop-trivial-mapping-space},
which implies $VN(G)$ is not operator projective by Lemma \ref{lem-opensubgp} and Proposition \ref{prop-proj}.
For the case $UCB(\widehat{G})$, we choose $H$ and $K$ as above, and we choose a nonzero $h\in A(H)$ supported in $K$.
Then, it can be shown that $T(h\cdot L_{1_K}) = 0$ for any $T \in {}_{A(H)} B(UCB(\widehat{H}), A(H))$,
which implies $UCB(\widehat{G})$ is not operator projective.

\end{proof}

Since $VN(G)$ is not essential we can not use the same approach for the positive answer.
Indeed, we have a more restricted result as you can see in Theorem \ref{thm-VN(G)} below.

	\begin{lem}\label{lem-proj}
	Let $G$ be an amenable discrete group. Suppose that $VN(G)$ is operator projective in $A(G)$-{\bf mod}.
	Then there is a bounded projection from $VN(G)$ onto $C^*_r(G)$.
	\end{lem}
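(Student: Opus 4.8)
The plan is to use the characterization of operator projectivity from Proposition \ref{prop-woods}: since $G$ is discrete, $A(G)$ has a b.a.i.\ and by Theorem \ref{thm-pos-Cr(G)} the essential module $C^*_r(G)$ is already operator projective, so the interesting content is to extract a bounded projection $VN(G) \to C^*_r(G)$ from the hypothesized projectivity of $VN(G)$. Since $G$ is discrete, $A(G)$ has an \emph{identity}? No---$A(G)$ has a b.a.i.\ but need not be unital. The key observation I would exploit is that for discrete $G$, the function $1_{\{e\}} = \delta_e \in A(G)$ (the indicator of the identity) is a genuine element of $A(G)$, and that convolution operators $L_h$ for $h \in \ell^1(G)$ sit inside $VN(G)$ with $C^*_r(G) = \overline{\{L_h : h \in \ell^1(G)\}}$.

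First I would invoke Proposition \ref{prop-woods} in its non-essential form: since $VN(G)$ is operator projective, there is a completely bounded left $A(G)$-module map
\[
\rho : VN(G) \longrightarrow A(G)_+ \prt VN(G)
\]
that is a right inverse of the extended multiplication map $\pi_{+} : A(G)_+ \prt VN(G) \to VN(G)$. Next I would compose $\rho$ with a suitable $A(G)$-module map out of the tensor product that lands back in $C^*_r(G)$. The natural candidate is to apply, in the first tensor leg, a functional or module map that detects $C^*_r(G)$, using the discreteness of $G$: for discrete groups $C^*_r(G) = C^*_\delta(G)$ and one has the canonical trace/conditional-expectation structure available. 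The idea is that the multiplication action $A(G) \cdot VN(G)$ already lands in $UCB(\widehat G) = \overline{A(G)\cdot VN(G)}$, and for discrete $G$ one checks that $A(G)\cdot VN(G) \subseteq C^*_r(G)$, so the \emph{range} of the non-unital part of $\pi_+$ is contained in $C^*_r(G)$.

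Concretely, I would decompose $A(G)_+ \prt VN(G) = (A(G)\prt VN(G)) \oplus (\mathbb{C}\prt VN(G))$ and write $\pi_+ = \pi \oplus \mathrm{id}$ accordingly, where $\pi : A(G)\prt VN(G) \to VN(G)$ has range in $UCB(\widehat G)$. For discrete $G$, I would verify $u\cdot L_h = L_{uh}$ lands in $C^*_r(G)$ for $u \in A(G)$ and $h \in \ell^\infty(G)$ (since $uh \in \ell^1(G)$ when $u \in A(G)\subseteq c_0(G)$ and $h$ bounded), giving $\pi(A(G)\prt VN(G)) \subseteq C^*_r(G)$. Then the map
\[
P := \pi \circ \iota \circ \rho : VN(G) \longrightarrow C^*_r(G),
\]
where $\iota$ is the projection of $A(G)_+\prt VN(G)$ onto its $A(G)\prt VN(G)$ summand, is a composition of bounded maps. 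Finally I would check that $P$ restricts to the identity on $C^*_r(G)$: for $a \in C^*_r(G)$, which is essential, one has $\pi_+\circ\rho(a)=a$ and the scalar component of $\rho(a)$ must vanish on the essential part, so $P(a)=a$. This makes $P$ a bounded projection onto $C^*_r(G)$.

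The main obstacle I anticipate is the last verification, namely controlling the scalar ($\mathbb{C}\prt VN(G)$) component of $\rho$ and showing it does not obstruct $P$ from being a projection onto $C^*_r(G)$; more delicately, one must confirm that $\pi$ genuinely maps $A(G)\prt VN(G)$ into $C^*_r(G)$ rather than merely into $UCB(\widehat G)$, which is exactly the point where discreteness of $G$ is essential (for discrete $G$, $UCB(\widehat G)$ and $C^*_r(G)$ interact favorably since $A(G)\subseteq c_0(G)$ and pointwise products with bounded functions return summable functions). Establishing that inclusion carefully, and that $P|_{C^*_r(G)} = \mathrm{id}$, is the crux; the rest is assembling completely bounded maps, which is routine.
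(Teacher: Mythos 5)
Your overall strategy is the paper's own (the paper defers to \cite[Lemma 3.2]{DP04}): use Proposition \ref{prop-woods} to get a completely bounded $A(G)$-module map $\rho : VN(G) \to A(G)_+ \widehat{\otimes}\, VN(G)$ splitting $\pi_+$, split off the scalar summand, invoke the inclusion $A(G)\cdot VN(G)\subseteq C^*_r(G)$ for discrete $G$, and take $P=\pi\circ\iota\circ\rho$. However, two steps are genuinely defective as written. First, your justification of the key inclusion is false: you claim $uh\in\ell^1(G)$ ``since $u\in A(G)\subseteq c_0(G)$ and $h$ bounded,'' but $c_0\cdot\ell^\infty\subseteq c_0$, not $\ell^1$ (take $u(n)=1/\log n$ and $h\equiv 1$ on $G=\mathbb{Z}$). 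Also, elements of $VN(G)$ are not convolutions by $\ell^\infty$ functions in any useful sense; the right parametrization is by the coefficient function $\hat T := T\delta_e\in\ell^2(G)$. The correct argument is: for discrete $G$ one has $A(G)\subseteq\ell^2(G)$ with $\|u\|_2\le\|u\|_{A(G)}$, and $u\cdot T=\lambda(u\hat T)$ where $u\hat T\in\ell^1(G)$ by Cauchy--Schwarz, so $u\cdot T\in C^*_r(G)$.

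Second, the step you yourself flag as the crux --- that the scalar component of $\rho$ does not obstruct $P|_{C^*_r(G)}=\mathrm{id}$ --- is precisely the content of the Dales--Polyakov argument, and you assert rather than prove it. It closes as follows: write $\rho=\sigma + 1\otimes\tau$ with $\sigma : VN(G)\to A(G)\widehat{\otimes}\, VN(G)$ and $\tau : VN(G)\to VN(G)$ bounded, so that $x=\pi(\sigma(x))+\tau(x)$. Since $A(G)$ is an ideal in $A(G)_+$, the module property gives $\rho(u\cdot x)=u\cdot\rho(x)\in A(G)\widehat{\otimes}\, VN(G)$ for all $u\in A(G)$, $x\in VN(G)$; hence $\tau$ vanishes on $A(G)\cdot VN(G)$ and, by continuity, on its closure. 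Amenability of $G$ (via Leptin --- note the b.a.i.\ in $A(G)$ comes from amenability, \emph{not} from discreteness, as you state twice) yields a b.a.i.\ $(e_\alpha)$ with $e_\alpha\cdot a\to a$ for every $a\in C^*_r(G)$, so $C^*_r(G)\subseteq\overline{A(G)\cdot VN(G)}$ (alternatively, $\delta_s\cdot\lambda(s)=\lambda(s)$ gives this directly). Therefore $\tau$ vanishes on $C^*_r(G)$, the range of $P=\pi\circ\sigma=\mathrm{id}-\tau$ lies in the closed subspace $C^*_r(G)$, and $P$ fixes $C^*_r(G)$, i.e.\ $P$ is the desired bounded projection. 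With these two repairs your outline becomes the paper's proof; without them, the two central claims of the argument are unsupported.
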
  
\begin{proof}
When $G$ is discrete it is clear that $A(G) \cdot VN(G) \subseteq C^*_r(G)$. Since $G$ is amenable we have a b.a.i. in $A(G)$.
Then, the rest of the proof is the same as \cite[Lemma 3.2]{DP04}.
\end{proof}

The next two results are of independent interest.

The first proposition and its proof was suggested to us by Bill Johnson and Roger Smith. (We also want to thank Roger Smith and 
Gilles Pisier for bring to our attention the manuscript \cite{ABU} where a more general version of 
the proposition below is establised.) 

\begin{prop}\label{P:com-A-VN(A)}
Let $H$ be a Hilbert space and $A \subseteq B(H)$ be an infinite separable $C^*$-algebra, and let $VN(A)$ be the
von Neumann algebra generated by $A$. Then $A$ is not complemented in $VN(A)$.
\end{prop}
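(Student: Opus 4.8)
The plan is to argue by contradiction, exploiting the tension between two classical facts: a von Neumann algebra is a \emph{Grothendieck space}, whereas an infinite-dimensional $C^*$-algebra is never reflexive. So suppose $P\colon VN(A)\to A$ were a bounded linear projection, i.e.\ $P|_A=\mathrm{id}_A$, and write $M:=VN(A)$. The first step is to invoke the fact that every von Neumann algebra, in particular $M$, is a Grothendieck space: every weak-$*$ convergent sequence in $M^*$ is already weakly convergent. This is the one genuinely heavy input, and I expect importing it cleanly to be the main obstacle; I would simply cite it, since it holds for all $C^*$-algebras by Pfitzner and is classical for von Neumann algebras.

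The second step would be to upgrade the Grothendieck property to weak compactness of $P$. Here I would use that $A$ is separable, so that the closed unit ball $B_{A^*}$ is weak-$*$ metrizable. Given any sequence $(\varphi_k)\subseteq B_{A^*}$, Banach--Alaoglu together with metrizability yields a weak-$*$ convergent subsequence $\varphi_{k_j}\to\varphi$. Since the adjoint $P^*\colon A^*\to M^*$ is automatically weak-$*$-to-weak-$*$ continuous, $P^*\varphi_{k_j}\to P^*\varphi$ weak-$*$ in $M^*$, and the Grothendieck property then forces this convergence to be weak. By Eberlein--\v{S}mulian, $P^*(B_{A^*})$ is relatively weakly compact, so $P^*$, and hence $P$ by Gantmacher's theorem, is weakly compact. (This is exactly the standard characterization that every operator from a Grothendieck space into a separable space is weakly compact, specialized to our situation.)

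The contradiction should then be immediate. Letting $\iota\colon A\hookrightarrow M$ denote the inclusion, the identity factors as $\mathrm{id}_A=P\circ\iota$, so $\mathrm{id}_A$ factors through the weakly compact operator $P$ and is therefore itself weakly compact; equivalently, $A$ is reflexive. But $A$ is an infinite-dimensional $C^*$-algebra, and I would rule out reflexivity by exhibiting a copy of $c_0$: choosing a self-adjoint element with infinite spectrum and applying the continuous functional calculus to a sequence of nonnegative functions with pairwise disjoint supports produces pairwise orthogonal norm-one positive elements $(a_n)$ whose closed linear span is isometric to $c_0$. As $c_0$ is a nonreflexive closed subspace, $A$ cannot be reflexive, completing the contradiction.

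Finally I would record where the hypotheses enter: separability of $A$ is used only to metrize $B_{A^*}$ in the second step, and the statement genuinely fails without it (for instance $M$ is trivially complemented in itself), while infinite-dimensionality is precisely what obstructs reflexivity in the last step. The entire argument rests on the Grothendieck property of $VN(A)$; everything past that is soft Banach-space functional analysis, which is why I single out that citation as the crux.
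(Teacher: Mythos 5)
Your argument is correct, but it is a genuinely different route from the paper's. The paper works inside $A$: it locates an infinite-dimensional masa $B\cong C(X)$, uses disjointly supported positive functions to embed $c_0$ into $A$ with weak-$*$ closure a copy of $\ell^\infty$ inside $VN(A)$, projects $A$ onto that copy of $c_0$ via Sobczyk's theorem (separable injectivity of $c_0$), and composes with the hypothetical projection $VN(A)\to A$ to obtain a bounded projection $\ell^\infty\to c_0$, contradicting the Phillips lemma. All of its inputs are classical and elementary. You instead put the weight on the \emph{domain}: $VN(A)$ is a Grothendieck space, so (by the weak-$*$ metrizability of $B_{A^*}$ coming from separability of $A$, plus Eberlein--\v{S}mulian and Gantmacher) every bounded operator from $VN(A)$ into $A$ is weakly compact; a weakly compact projection onto $A$ makes $\mathrm{id}_A$ weakly compact, i.e.\ $A$ reflexive, which fails because $A$ contains $c_0$. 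Your version buys a cleaner and formally more general statement --- an infinite-dimensional separable $C^*$-algebra is uncomplemented in \emph{any} von Neumann algebra containing it, with no masa or compatible $c_0/\ell^\infty$ pair needed --- at the price of a much heavier citation (the Grothendieck property of arbitrary von Neumann algebras, due to Pfitzner). It is worth noting that the two proofs are cousins: the Phillips lemma is exactly the special case $A=c_0$, $M=\ell^\infty$ of your argument, since $\ell^\infty$ is the prototypical Grothendieck space.

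One correction to your supporting remarks: it is \emph{not} true that every $C^*$-algebra is a Grothendieck space --- a separable Grothendieck space is reflexive, so $c_0$, $C[0,1]$, and indeed your own $A$ all fail the property. What Pfitzner proved is that every $C^*$-algebra has Pe{\l}czy\'nski's property (V), which yields the Grothendieck property for those $C^*$-algebras that are dual spaces, in particular for all von Neumann algebras. Since you only apply the property to $VN(A)$, the proof stands, but the citation should be phrased for von Neumann algebras only.
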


\begin{proof}
We first claim that $A$ contains a copy of $c_0$. Look at a masa $B$ in $A$.
If it is finite dimensional then it follows easily that $A$ is finite dimensional.
Thus $B$ is infinite dimensional, and so, it is $*$-isomorphic to $C(X)$ for an infinite metric space $X$.
Choose infinitely many disjoint open balls in $X$, and choose a sequence of positive functions of norm 1 supported in these balls,
say $(h_i)_{i\ge 1}$. Then $(c_i) \mapsto \sum c_ih_i$ embedds $c_0$ into $A$ and also $\ell^{\infty}$ into $VN(A)$.

Now suppose there is a projection from $VN(A)$ onto $A$. Since $c_0$ is separably injective (\cite{So62}), there is a projection from $A$ onto $c_0$.
Combining with the projection from $VN(A)$ onto $A$ and restricting this to $\ell^{\infty}$
gives a bounded projection from $\ell^{\infty}$ onto $c_0$ which contradicts the Phillips Lemma. Thus, $A$ is not complemented in $VN(A)$.
\end{proof}

\begin{thm}\label{thm-non-complemented}
Let $G$ be an infinite locally compact group. Then $C^*_r(G)$ is not complemented in $VN(G)$.
\end{thm}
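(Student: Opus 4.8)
The plan is to reduce the general statement to the separable case, where Proposition \ref{P:com-A-VN(A)} applies directly, and then derive a contradiction. The reduction is genuinely forced: the proof of Proposition \ref{P:com-A-VN(A)} ultimately rests on the separable injectivity of $c_0$ (Sobczyk's theorem), which fails as soon as $C^*_r(G)$ is non-separable, so the separability hypothesis cannot simply be discarded. Thus the heart of the argument will be to manufacture, from an arbitrary infinite $G$, an infinite second countable group $G'$ whose reduced $C^*$-algebra is infinite-dimensional and separable and is \emph{still} complemented in $VN(G')$ whenever $C^*_r(G)$ is complemented in $VN(G)$; applying Proposition \ref{P:com-A-VN(A)} to $A=C^*_r(G')$ (noting $VN(G')$ is exactly the von Neumann algebra it generates) then gives the contradiction.

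First I would record two transference principles for complementation. For an open subgroup $H\le G$ I would reuse the maps of Lemma \ref{lem-opensubgp}: the dual map $j_1^*:VN(G)\to VN(H)$ restricts to $R_\infty:C^*_r(G)\to C^*_r(H)$, while $j_\infty:C^*_r(H)\hookrightarrow C^*_r(G)$ is a genuine inclusion with $R_\infty\circ j_\infty=\mathrm{id}$. Given a bounded projection $P:VN(G)\to C^*_r(G)$, the composite $j_1^*\circ P\circ j_\infty$ is then a bounded projection of $VN(H)$ onto $C^*_r(H)$. For a compact normal subgroup $K\trianglelefteq G$ I would instead use the averaging projection $p=\int_K\lambda(k)\,dk\in VN(G)$, which is \emph{central} because $K$ is normal and compact (its Haar measure is conjugation-invariant). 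One checks $pVN(G)\cong VN(G/K)$ via $p\lambda(g)\mapsto\lambda_{G/K}(gK)$, and, crucially, $pC^*_r(G)\subseteq C^*_r(G)$ since $p\lambda(f)=\lambda(\mu_K*f)$ with $\mu_K*f\in L^1(G)$. Hence $A:=pC^*_r(G)\cong C^*_r(G/K)$ is a $C^*$-subalgebra of $C^*_r(G)$ that generates $pVN(G)$, and $b\mapsto pP(b)$ is a bounded projection of $pVN(G)$ onto $A$.

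With these principles in hand, the remaining task is the structural reduction to an infinite second countable group. If $G$ is discrete, any countably infinite subgroup $H$ is open and second countable, so $C^*_r(H)$ is infinite separable and the open-subgroup principle finishes the proof. If $G$ is non-discrete, I would pass to the open, compactly generated (hence $\sigma$-compact) subgroup $H$ generated by a compact neighbourhood of the identity; $H$ is again non-discrete, hence infinite. By the Kakutani--Kodaira theorem there is a compact normal subgroup $K\trianglelefteq H$ with $H/K$ second countable. If $H/K$ is infinite, the compact-quotient principle applies with $A=C^*_r(H/K)$. If $H/K$ is finite, then $H$ is an infinite compact group, and I would produce an infinite second countable quotient of $H$: writing $H$ as an inverse limit of compact Lie groups, either some Lie quotient is already infinite, or $H$ is profinite with open normal subgroups of unbounded index, in which case a countable cofinal chain of them yields a countably based infinite quotient. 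In every case the compact-quotient principle again delivers an infinite separable $A$ complemented in the von Neumann algebra it generates, contradicting Proposition \ref{P:com-A-VN(A)}.

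The main obstacle is exactly this passage to separability, i.e. the structure theory required to extract an infinite second countable group from an arbitrary infinite $G$ while preserving complementation; the compact subcase, namely producing an infinite metrizable quotient of an infinite (possibly profinite) compact group, is the most delicate point. On the operator-algebra side, the key technical verification is that the averaging projection $p$ is simultaneously central in $VN(G)$, generates a copy of $VN(G/K)$, and keeps $pC^*_r(G)$ inside $C^*_r(G)$ — this is what makes the compact-quotient transference legitimate and is the one computation I would carry out in full detail.
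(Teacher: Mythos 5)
Your proof is correct and follows essentially the same route as the paper: pass to an open subgroup, average over a compact normal subgroup to reach a separable reduced $C^*$-algebra, and invoke Proposition \ref{P:com-A-VN(A)}. The paper takes $H$ open and almost connected, chooses $K$ compact normal with $H/K$ a Lie group, and applies the proposition to $C^*_r(H/K)$; your variant (compactly generated open subgroup, Kakutani--Kodaira, and separate treatment of the discrete and profinite cases) is actually the more careful one, since the paper never checks that $H/K$ is infinite --- it is not when $G$ is infinite discrete, nor when $H$ is an infinite profinite group, and Proposition \ref{P:com-A-VN(A)} requires the algebra to be infinite-dimensional. Your transference maps ($j_1^*\circ P\circ R_1^*$ for open subgroups, compression by the central projection $p=\int_K\lambda(k)\,dk$ for compact normal quotients) coincide with the paper's $1_H P$ and $\Gamma\circ P_H$.
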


\begin{proof}
Suppose that there is a bounded projection $P$ from $VN(G)$ onto $C^*_r(G)$.
Let $H$ be an open almost connected subgroup of $G$. It is clear that
$P_H:=1_HP$ is a bounded projection from $VN(H)$ onto $C^*_r(H)$. Now let $K$
be a compact normal subgroup of $H$ such that $H/K$ is a Lie group, and so, in particular,
$C_r^*(H/K)$ is separable. Let $\Gamma : C_r^*(H) \to C_r^*(H/K)$ be the averaging operator over
the Haar measure of $K$ i.e. 
	$$\Gamma(T)=\int_K T*\delta_k dk \ \ (T\in C_r^*(G)).$$
It is straightforward to verify that the restriction of $\Gamma\circ P_H$ on
$VN(H/K)$ is a bounded projection from $VN(H/K)$ onto $C_r^*(H/K)$. However this is
impossible from Proposition \ref{P:com-A-VN(A)}. Thus $C_r^*(G)$ is not complemented in $VN(G)$. 
\end{proof}

If we combine Lemma \ref{lem-proj} with Lemma \ref{lem-opensubgp} and Theorem \ref{thm-non-complemented}, then we get the following.

	\begin{thm}\label{thm-VN(G)}
	Let $G$ be a discrete group containing an infinite amenable subgroup.
	Then $VN(G)$ is not operator projective in $A(G)$-{\bf mod}.
	\end{thm}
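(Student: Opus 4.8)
The plan is to argue by contradiction, assembling the three results collected in the remark preceding the statement. Suppose that $VN(G)$ is operator projective in $A(G)$-{\bf mod}, and let $H$ be an infinite amenable subgroup of $G$. The first observation I would make is the one that makes the whole machinery applicable: because $G$ is discrete, every subgroup of $G$ is open, so in particular $H$ is an \emph{open} subgroup of $G$. This is exactly the hypothesis required by the transference lemma.

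With $H$ an open subgroup in hand, I would apply Lemma \ref{lem-opensubgp}: since $VN(G)$ is operator projective in $A(G)$-{\bf mod}, the lemma transfers this property downward to yield that $VN(H)$ is operator projective in $A(H)$-{\bf mod}. Now $H$ is itself a discrete amenable group, so Lemma \ref{lem-proj} applies to $H$ and produces a bounded projection from $VN(H)$ onto $C^*_r(H)$. Finally, since $H$ is infinite, Theorem \ref{thm-non-complemented} asserts that $C^*_r(H)$ is \emph{not} complemented in $VN(H)$, directly contradicting the projection just constructed. Hence the assumption fails and $VN(G)$ is not operator projective in $A(G)$-{\bf mod}.

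The combination itself is routine; the substance of the argument resides entirely in the three imported results, and in particular in Theorem \ref{thm-non-complemented}, whose proof reduces (via Proposition \ref{P:com-A-VN(A)}) to the failure of $c_0$ to be complemented in $\ell^\infty$, i.e. the Phillips Lemma. Consequently the only step demanding genuine care in the assembly is the passage to the open subgroup $H$: I must verify that discreteness of $G$ is precisely what forces $H$ to be open, so that Lemma \ref{lem-opensubgp} is legitimately invoked, and that $H$ carries over both its amenability and its infiniteness, so that Lemma \ref{lem-proj} and Theorem \ref{thm-non-complemented} apply to $H$ as stated. There is no additional analytic obstacle beyond these bookkeeping checks.
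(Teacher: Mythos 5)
Your argument is correct and is exactly the combination the paper intends: the sentence preceding the theorem states that the result follows by combining Lemma \ref{lem-proj}, Lemma \ref{lem-opensubgp}, and Theorem \ref{thm-non-complemented}, and your assembly (openness of the infinite amenable subgroup $H$ from discreteness of $G$, transference of projectivity to $VN(H)$, the resulting projection onto $C^*_r(H)$, and the contradiction with non-complementation) is precisely that route.
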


Now we consider the case of discrete groups containing $\mathbb{F}_2$, the free group with 2 generators,
equivalently, containing $\mathbb{F}_\infty$, the free group with infinitely many generators.
Actually, we have negative results for those groups, which shows that amenability does matter.
The proof depends on the operator space structure of the span of free generators in $C^*_r(\mathbb{F}_\infty)$ and $A(\mathbb{F}_\infty)$.

Recall that $R_n \cap C_n$ (or $RC^\infty_n$) and $R_n + C_n$ (or $RC^1_n$) are the intersection and the sum of $R_n$ and $C_n$,
$n$-dimensional row and column Hilbert spaces, respectively (\cite{P03}).
Note that operator space structures of $R_n$ and $C_n$ are determined as follows
	$$\norm{\sum^n_{i=1}A_i \otimes e_i}_{M_m(R_n)} = \norm{\sum^n_{i=1}A_iA^*_i}_{M_m}^{\frac{1}{2}}$$
and
	$$\norm{\sum^n_{i=1}A_i \otimes e_i}_{M_m(C_n)} = \norm{\sum^n_{i=1}A^*_iA_i}_{M_m}^{\frac{1}{2}}$$
for any $(A_i)^n_{i=1} \subseteq M_m$, $m\in \n$, where $(e_i)^n_{i=1}$ is the standard basis of $R_n$ and $C_n$.
Moreover, for any $(A_i)^n_{i=1} \subseteq M_m$, $m\in \n$, we have
	$$\norm{\sum^n_{i=1}A_i \otimes e_i}_{M_m(R_n\cap\, C_n)}
	= \max \Bigg\{ \norm{\sum^n_{i=1}A_iA^*_i}_{M_m}^{\frac{1}{2}}, \norm{\sum^n_{i=1}A^*_iA_i}_{M_m}^{\frac{1}{2}} \Bigg\}$$
and 
	$$\norm{\sum^n_{i=1}A_i \otimes e_i}_{M_m(R_n +C_n)}
	= \inf_{A_i = B_i + C_i} \Bigg\{ \norm{\sum^n_{i=1}B_iB^*_i}_{M_m}^{\frac{1}{2}}+ \norm{\sum^n_{i=1}C^*_iC_i}_{M_m}^{\frac{1}{2}} \Bigg\},$$
where the infimum runs over all possible $A_i = B_i + C_i$ and $(e_i)^n_{i=1}$ is the standard basis of $R_n \cap C_n$ and $R_n +C_n$.
Note also that $(R_n\cap C_n)^* = R_n +C_n$.

	\begin{thm}\label{thm-free-Cr(G)}
	Let $G$ be a discrete group containing $\mathbb{F}_2$.
	Then, $C^*_r(G)$ (resp. $UCB(\widehat{G})$, $VN(G)$) is not operator projective in $A(G)$-{\bf mod}.
	\end{thm}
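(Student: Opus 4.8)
The plan is to reduce the whole statement to the single group $\mathbb{F}_\infty$ and there to combine the \emph{rigidity} forced by the pointwise $A(G)$-action with Haagerup's description of the span of the free generators. Since $G$ is discrete and contains $\mathbb{F}_2$, it contains a copy of $\mathbb{F}_\infty$, and every subgroup of a discrete group is open; so by Lemma~\ref{lem-opensubgp} it suffices to show that $C^*_r(\mathbb{F}_\infty)$, $UCB(\widehat{\mathbb{F}_\infty})$ and $VN(\mathbb{F}_\infty)$ are not operator projective in $A(\mathbb{F}_\infty)$-{\bf mod}. I would treat $C^*_r$ in detail. The case of $UCB(\widehat{G})$ is identical, since for discrete $G$ it is essential and satisfies $UCB(\widehat{G})\subseteq C^*_r(G)$ while still containing each $\lambda(g_i)=\delta_{g_i}\cdot\lambda(g_i)$; and $VN(G)$ is handled the same way using instead the non-essential form of Proposition~\ref{prop-woods}, i.e. a splitting $\rho\colon VN\to A_+\widehat{\otimes}VN$ of $\pi_{+}$.

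Write $g_1,g_2,\dots$ for the free generators and $\lambda_i=\lambda(g_i)$. Assuming $C^*_r(G)$ is operator projective, Proposition~\ref{prop-woods} (essential case) gives a completely bounded left $A(G)$-module map $\rho\colon C^*_r(G)\to A(G)\widehat{\otimes}C^*_r(G)$ with $\pi\circ\rho=\mathrm{id}$. The crucial structural step is to pin down $\rho$ on the generators, and here I would use that for discrete $G$ the module action is \emph{pointwise}, $f\cdot\lambda(g)=f(g)\lambda(g)$ (this is the formula $f\cdot L_h=L_{fh}$ of Section~\ref{C*r(G)VN(G)}). For each $h\in G$ the slice $\mathrm{id}_{A}\otimes\phi_h$, where $\phi_h\in(C^*_r(G))^*$ is the coefficient functional $a\mapsto\widehat{a}(h)$, is a completely bounded left $A(G)$-module map into $A(G)$. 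Setting $T=(\mathrm{id}_A\otimes\phi_h)\circ\rho$ and using $T(f\cdot\lambda_i)=f(g_i)T(\lambda_i)$ together with the pointwise action $fu=f(g_i)u$ on $A(G)$, one finds that $T(\lambda_i)$ is supported at $g_i$, hence a scalar multiple of $\delta_{g_i}$. Running this over all $h$ forces $\rho(\lambda_i)=\delta_{g_i}\otimes w_i$ for some $w_i\in C^*_r(G)$, and then $\pi\circ\rho=\mathrm{id}$ together with $\delta_{g_i}\cdot w_i=\widehat{w_i}(g_i)\lambda_i$ gives the normalization $\widehat{w_i}(g_i)=1$.

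The contradiction then follows from a single cleverly chosen functional. The inclusion $j\colon A(G)\hookrightarrow B_r(G)=(C^*_r(G))^*$ (restriction of a normal functional to $C^*_r(G)$) is completely contractive, so it defines, via $(A(G)\widehat{\otimes}C^*_r(G))^*\cong CB(A(G),(C^*_r(G))^*)$, an element $\Omega$ with $\|\Omega\|\le1$ and $\Omega(\delta_{g_i}\otimes w_i)=\langle w_i,\delta_{g_i}\rangle=\widehat{w_i}(g_i)=1$ for every $i$. Thus $\Omega\circ\rho\colon C^*_r(G)\to\mathbb{C}$ is bounded with $\|\Omega\circ\rho\|\le\|\rho\|_{cb}$ and equals $1$ on each $\lambda_i$. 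Restricting to $E_n=\text{span}\{\lambda_i\}_{i=1}^n$, the functional $\Omega\circ\rho$ is $\sum_{i=1}^n e_i^{*}$. By Haagerup's inequality $E_n$ is, uniformly in $n$, completely isomorphic to $R_n\cap C_n$, and since $(R_n\cap C_n)^{*}=R_n+C_n$ the norm of $\sum_{i=1}^n e_i^{*}$ is comparable to $\|(1,\dots,1)\|_{R_n+C_n}=\sqrt n$. Concretely, testing against $\tfrac{1}{\sqrt n}\sum_{i=1}^n\lambda_i$ and using $\|\sum_{i=1}^n\lambda_i\|_{C^*_r}\le 2\sqrt n$ yields $\|(\Omega\circ\rho)|_{E_n}\|\ge\tfrac12\sqrt n$, whence $\tfrac12\sqrt n\le\|\rho\|_{cb}$ for all $n$, which is absurd.

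The main obstacle is the structural step identifying $\rho(\lambda_i)=\delta_{g_i}\otimes w_i$ with $\widehat{w_i}(g_i)=1$; this is exactly where discreteness enters, through the diagonal action. Everything else is either bookkeeping or the quantitative operator-space input (Haagerup's inequality and the duality $(R_n\cap C_n)^{*}=R_n+C_n$). I would also emphasize that the OAP-based route of Proposition~\ref{prop-proj} used in Theorem~\ref{thm-notproj-Cr(G)} is \emph{unavailable} here, since $A(\mathbb{F}_\infty)$ fails the OAP; this is precisely why the explicit computation with $R_n\cap C_n$ and $R_n+C_n$ is needed, and why amenability genuinely matters in these results.
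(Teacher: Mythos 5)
Your reduction to $\mathbb{F}_\infty$, your use of Proposition \ref{prop-woods}, and your structural conclusion $\rho(\lambda(g_i))=\delta_{g_i}\otimes w_i$ with coefficient $1$ at $\lambda(g_i)$ all match the paper. Where you genuinely diverge is the endgame: the paper keeps the full operator-space information, building a map $\Phi\colon R_n\cap C_n\to(R_n+C_n)\widehat{\otimes}(R_n\cap C_n)$ with cb-norm bounded independently of $n$ (via the cb-bounded truncations $P^1_n,P^\infty_n$ and the complete $2$-isomorphisms of $E^1_n$, $E^\infty_n$ with $R_n+C_n$, $R_n\cap C_n$) and then pairing with $\mathrm{id}_{R_n\cap C_n}$ under $(R_n+C_n)\widehat{\otimes}(R_n\cap C_n)\cong CB(R_n\cap C_n)^*$ to get $n\le\|\Phi\|\sqrt n$. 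You instead collapse everything to a single bounded linear functional $\Omega\circ\rho$, with $\Omega$ induced by the complete contraction $A(G)\hookrightarrow C^*_r(G)^*$, which equals $1$ on every $\lambda(g_i)$; combined with $\bigl\|\sum_{i=1}^n\lambda(g_i)\bigr\|\le 2\sqrt n$ this yields the same $n$ versus $\sqrt n$ contradiction. That is a genuine simplification: you need only the scalar Haagerup/Akemann--Ostrand norm estimate and the complete boundedness of $A(G)\to C^*_r(G)^*$, not the matrix-level duality $(R_n\cap C_n)^*=R_n+C_n$. Your treatment of $UCB(\widehat G)$ and of $VN(G)$ (via $A(G)_+$) is consistent with the paper's.

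Two points need repair. First, your derivation of $\rho(\lambda_i)=\delta_{g_i}\otimes w_i$ is gappy as written: knowing that every slice $(\mathrm{id}_{A}\otimes\phi_h)(\rho(\lambda_i))$ is a multiple of $\delta_{g_i}$ does not by itself place $\rho(\lambda_i)$ in $\delta_{g_i}\otimes C^*_r(G)$, because slices need not separate the points of an operator projective tensor product (this is precisely an approximation-property issue). The fix is immediate and is what the paper does: $\rho(\lambda_i)=\delta_{g_i}\cdot\rho(\lambda_i)$, and left multiplication by $\delta_{g_i}$ acts on the first leg as the rank-one map $a\mapsto a(g_i)\delta_{g_i}$, so it carries all of $A(G)\widehat{\otimes}\,C^*_r(G)$ into the closed subspace $\delta_{g_i}\otimes C^*_r(G)$. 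Second, your closing assertion that $A(\mathbb{F}_\infty)$ fails the OAP is false: free groups are weakly amenable, hence have the Haagerup--Kraus approximation property, and therefore $A(\mathbb{F}_\infty)$ does have the OAP. The correct reason the route of Theorem \ref{thm-notproj-Cr(G)} is unavailable here is that Proposition \ref{prop-trivial-mapping-space} concerns non-discrete groups; for discrete $G$ there are plenty of nonzero maps in ${}_{A(G)}CB(C^*_r(G),A(G))$ (for instance $a\mapsto\widehat{a}(s)\,\delta_s$), so Proposition \ref{prop-proj} produces no contradiction and a quantitative argument is forced.
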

\begin{proof}
Note that it is enough to show the theorem for the case $G = \mathbb{F}_\infty$
because of Lemma \ref{lem-opensubgp} and the fact that $\mathbb{F}_2$ contains $\mathbb{F}_\infty$.
We first consider the case $C^*_r(G)$ and suppose that $C^*_r(G)$ is operator projective in $A(G)$-{\bf mod}.
Since $C^*_r(G)$ is essential, we have a completely bounded left $A(G)$-module map
	$$\rho : C^*_r(G) \rightarrow A(G) \prt C^*_r(G),$$
which is a right inverse of the multiplication map $\pi$. Now we claim that $\rho$ is of the form
	\begin{equation}\label{right-inverse}
	\rho(\lambda(s)) = \delta_s \otimes (\lambda(s) + x_s)
	\end{equation}
for any $s\in G$, where $\lambda$ is the left regular representation of $G$ and $x_s \in X$ with $C^*_r(G) = X\oplus \mathbb{C}\lambda(s)$.

Indeed, for any fixed $s\in G$ we have $(x_i) \subseteq A(G)\otimes C^*_r(G)$ (algebraic tensor product) such that
	$$x_i = \sum^{n_i}_{j=1} a_j \otimes b_j\;\, \text{and}\;\, \rho(\lambda(s)) = \lim_i x_i.$$
Then we have $\rho(\lambda(s)) = \rho(\delta_s\cdot \lambda(s)) = \delta_s\cdot \rho(\lambda(s)) = \lim_i \delta_s\cdot x_i$ and
	$$\delta_s \cdot x_i = \sum^{n_i}_{j=1}\delta_s\cdot a_j \otimes b_j = \sum^{n_i}_{j=1}a_j(s)\delta_s \otimes b_j
	= \delta_s\otimes \Big(\sum^{n_i}_{j=1} a_j(s) b_j\Big) \in \delta_s \otimes C^*_r(G).$$
Since $\delta_s \otimes C^*_r(G)$ is a norm closed subspace of $A(G) \prt C^*_r(G)$ we have
	$$\rho(\lambda(s)) \in \delta_s \otimes C^*_r(G),$$
which implies that $\rho(\lambda(s)) = \delta_s \otimes (C \cdot \lambda(s) + x_s)$
for some constant $C$ and $x_s \in X$. Since $\pi \circ \rho (\lambda(s)) = \lambda(s)$ we have $C=1$.

Now we set
	$$E^\infty_n = \text{span}\{\lambda(g_i)\}^n_{i=1} \subseteq C^*_r(G), \; E^1_n = \text{span}\{\delta_{g_i}\}^n_{i=1} \subseteq A(G),$$
	$$P^\infty_n : C^*_r(G) \rightarrow E^\infty_n,\; \lambda(g_i) \mapsto
	\left\{ \begin{array}{ll} \lambda(g_i) & 1\le i\le n\\ 0 & \text{otherwise} \end{array}\right.,$$
and
	\begin{equation}\label{P1}
	P^1_n : A(G) \rightarrow E^1_n,\; \delta_{g_i} \mapsto
	\left\{ \begin{array}{ll} \delta_{g_i} & 1\le i\le n\\ 0 & \text{otherwise} \end{array}\right.,
	\end{equation}
where $(g_i)_{i \ge 1}$ is the set of free generators in $G=\mathbb{F}_\infty$.
It is well known (\cite[section 9.7]{P03}) that
	$$\phi : E^\infty_n \rightarrow R_n \cap C_n,\; \lambda(g_i) \mapsto e_i$$
and
	\begin{equation}\label{E1}
	\psi : E^1_n \rightarrow R_n + C_n,\; \delta_{g_i} \mapsto e_i
	\end{equation}
are complete 2-isomorphisms, and it is also well known (\cite[section 9.7]{P03})
that $P^\infty_n$ and $P^1_n$ are completely bounded with cb-norms $\le 2$.
By composing the above maps with $\rho$ we get the following completely bounded map with cb-norm independent of $n$: 
	$$\Phi : R_n \cap C_n \stackrel{\phi^{-1}}{\longrightarrow} C^*_r(G) \stackrel{\rho}{\longrightarrow} A(G) \prt C^*_r(G) \stackrel{P^1_n\otimes
	P^\infty_n}{\longrightarrow} E^1_n \prt E^\infty_n \stackrel{\psi \otimes \phi}{\longrightarrow} (R_n + C_n) \prt (R_n\cap C_n),$$
where
	$$\Phi(e_i) = e_i \otimes \Big(e_i + \sum_{1\le j\neq i\le n}\alpha_{i,j}e_j \Big)$$ for some constants $(\alpha_{i,j})_{1\le j\neq i\le n}$.
Note that
	$$(R_n + C_n) \prt (R_n\cap C_n) \cong CB(R_n\cap C_n)^*$$
completely isometrically under the duality bracket
	$$\la x\otimes y, T \ra = \la T(y), x \ra$$
for any $x\in R_n+C_n$, $y\in R_n\cap C_n$, and $T \in CB(R_n\cap C_n)$.

Thus we have
	\begin{align*}
	\norm{\Phi \Big(\sum^n_{i=1}e_i\Big)}_{CB(R_n\cap C_n)^*} &
	\ge \frac{\abs{\left\langle \Phi\Big(\sum^n_{i=1}e_i\Big), id_{R_n\cap\, C_n} \right\rangle}}{\norm{id_{R_n\cap\, C_n}}_{cb}}\\
	& = \abs{ \sum^n_{i=1} \la e_i + \sum_{1\le j\neq i\le n}\alpha_{i,j}e_j, e_i \ra }\\
	& = \abs{ \sum^n_{i=1} \la e_i, e_i \ra } = n.
	\end{align*}
On the other hand, since $R_n \cap C_n$ is just $\ell^2_n$ as a Banach space, we have
	$$\norm{\Phi \Big(\sum^n_{i=1}e_i\Big)}_{CB(R_n\cap C_n)^*} \le \norm{\Phi} \norm{\sum^n_{i=1}e_i}_{R_n \cap C_n} = \norm{\Phi}\sqrt{n},$$
which is a contradiction when $n$ is large enough.

The proof for $UCB(\widehat{G})$ and $VN(G)$ are similar, but we need to be careful when showing \eqref{right-inverse} for $VN(G)$
since $VN(G)$ is not essential.
Indeed, for any fixed $s\in G$ we have $\rho(\lambda(s)) = \lim_i x_i$ for some
$x_i = \sum^{n_i}_{j=1}(a_j \oplus c_j) \otimes b_j \in A(G)_+ \otimes VN(G)$, where $a_j \in A(G)$, $b_j\in VN(G)$, and $c_j$'s are scalars.
Then, we have
	\begin{align*}
	\delta_s \cdot x_i & = \sum^{n_i}_{j=1}\delta_s \cdot (a_j \oplus c_j) \otimes b_j = \sum^{n_i}_{j=1}(a_j(s) + c_j) \delta_s \otimes b_j\\
	& = \delta_s \otimes \Big(\sum^{n_i}_{j=1}(a_j(s) + c_j) b_j\Big) \in \delta_s \otimes VN(G).
	\end{align*}
The rest of the proof is the same.
\end{proof}

\section{The modules $L^p(VN(G))$ for $1<p<\infty$}

In this section we will assume that the reader is familiar with standard materials about
complex interpolation of Banach spaces (\cite{BL76}) and operator spaces (\cite{P96}).

\subsection{Noncommutative $L^p$ and complex interpolation}
In this subsection we will briefly describe complex interpolation approach to noncommutative $L^p$ by Izumi (\cite{Iz97}),
which is a generalization of the results by Kosaki (\cite{Ko84a}) and Terp (\cite{Te81}).

Let $\M$ be a von Neumann algebra with a normal semifinite faithful (shortly n.s.f.) weight $\varphi$.
Let $n_\varphi = \{x\in \M : \varphi(x^*x) <\infty\}$ and $\Lambda$ be the canonical embedding of $n_\varphi$ into $\Hi_\varphi$,
where $\Hi_\varphi$ is the Hilbert space obtained by the completion of $n_\varphi$ with the inner product
$\left\langle x, y\right\rangle_\varphi = \varphi(y^* x)$ for any $x, y \in n_\varphi$.
We consider the closed anti-linear map $S$ densely defined on $\Hi_\varphi$ by
	$$S(\Lambda x) = \Lambda (x^*)\; \text{for any}\; x\in n^*_\varphi \cap n_\varphi.$$
Let $J$ and $\Delta$ be the modular conjugation and the modular operator, respectively, obtained by the polar decomposition
	$$S = J\Delta^{\frac{1}{2}}.$$
Now we consider the full Tomita algebra $\mathfrak{a}_0$ defined by
	$$\mathfrak{a}_0 = \Lambda^{-1}(\mathfrak{A}_0),\; \text{where}$$
	$$\mathfrak{A}_0 = \{\xi \in \cap^\infty_{n=-\infty} D(\Delta^n) : \Delta \xi \in \Lambda(n^*_\varphi \cap n_\varphi), n\in \n\}.$$
Note that $\mathfrak{A}_0$ is dense in $\Hi_\varphi$.

The modular operator $\Delta$ give rise to an one-parameter modular automorphism group $\{\sigma^\varphi_t\}_{t\in\mathbb{R}}$ on $\M$ given by
	$$\sigma^\varphi_t(x) = \Delta^{it}x\Delta^{-it}\; \text{for any}\; t\in \mathbb{R}.$$
It is well known (\cite{Ta03}) that 
	\begin{equation}\label{analytic-elts}
	\text{{\it$\{\sigma^\varphi_t\}_{t\in\mathbb{R}}$ extends to a complex one-parameter group
	$\{\sigma^\varphi_\alpha\}_{\alpha \in \mathbb{C}}$ on $\mathfrak{a}_0$.}}
	\end{equation}

Now we introduce an important class in complex interpolation of noncommutative $L^p$ spaces. For any $\alpha \in \mathbb{C}$ we define
	\begin{equation}\label{def-L-alpha}
	L_{(\alpha)} = \left\{ x\in \M:  \begin{array}{ll} \text{there is a unique functional $\varphi^{(\alpha)}_x \in \M_*$ such that}\\
	\text{$\varphi^{(\alpha)}_x(y^* z) = \left\langle xJ\Delta^{\overline{\alpha}} \Lambda (y), J\Delta^{-\alpha} \Lambda(z) \right\rangle$
	for any $y,z\in \mathfrak{a}_0$}  \end{array}\right\}
	\end{equation}
with the norm $\norm{x}_{L_{(\alpha)}} = \max\{ \norm{x}_{\M}, \norm{\varphi^{(\alpha)}_x}_{\M_*} \}$.
Let $\mathfrak{a}^2_0$ be the algebraic linear span of the elements of the form $y^*z$ for $y,z \in \mathfrak{a}_0$.
Then, it is known (\cite{Iz97}) that
	$$\mathfrak{a}^2_0 \subseteq L_{(\alpha)}\; \text{for any}\; \alpha \in \mathbb{C}.$$
Note again that $\mathfrak{a}^2_0$ contains enough elements, so that $\Lambda(\mathfrak{a}^2_0)$ is dense in $\Hi_\varphi$.
For $\alpha \in \mathbb{C}$ we consider
	$$i^\infty_{(\alpha)} : L_{(\alpha)} \rightarrow \M,\; \text{the natural inclusion}$$
and
	$$i^1_{(\alpha)} : L_{(\alpha)} \rightarrow \M_*,\; x\mapsto \varphi^{(\alpha)}_x.$$ 
Then, the following diagram
	\begin{equation}\label{diagram-interpol}
	\xymatrix{&\M \ar[dr]^{(i^1_{(-\alpha)})^*} & \\L_{(\alpha)} \ar[ur]^{i^\infty_{(\alpha)}} \ar[dr]_{i^1_{(\alpha)}} && L^*_{(-\alpha)}\\
	&\M_* \ar[ur]_{(i^\infty_{(-\alpha)})^*}&}
	\end{equation}
is commutative (\cite[Theorem 2.5]{Iz97}), that is, we have
	$$\varphi^{(\alpha)}_x (y) = \varphi^{(-\alpha)}_y(x)\; \text{for any}\; x\in L_{(\alpha)}, y\in L_{(-\alpha)}.$$
Using $(i^1_{(-\alpha)})^*$ and $(i^\infty_{(-\alpha)})^*$ as identification maps we define
	$$L^p_{(\alpha)}(\M, \varphi) = [\M, \M_*]_{\frac{1}{p}}\; \text{for}\; 1<p<\infty.$$
It is proved in \cite{Iz97} that the definition of $L^p_{(\alpha)}(\M, \varphi)$ is independent of the choice of $\alpha$
up to an isometric isomorphism and is isometric to the $L^p$ space in the sense of Haagerup. Thus, we will simply write $L^p(\M)$.

If we interpolate the maps $i^1_{(\alpha)}$ and $i^\infty_{(\alpha)}$ and the maps $(i^1_{(-\alpha)})^*$ and $(i^1_{(-\alpha)})^*$ in \eqref{diagram-interpol}, respectively, then we get contractions $i^p_{(\alpha)}$ and $(i^p_{(-\alpha)})^*$ and we get another commutative diagram
	$$\xymatrix{&\M \ar[drr]^{(i^1_{(-\alpha)})^*} & \\
	L_{(\alpha)} \ar[ur]^{i^\infty_{(\alpha)}} \ar[dr]_{i^1_{(\alpha)}} \ar[r]^{i^p_{(\alpha)}} & L^p(\M) \ar[rr]^{(i^p_{(-\alpha)})^*}& &L^*_{(-\alpha)}\\
	&\M_* \ar[urr]_{(i^\infty_{(-\alpha)})^*}&}$$
which implies the following:
	\begin{align}\label{identify}
	& \text{\it For any $x\in L_{(\alpha)}$ we identify $i^p_{(\alpha)}x \in L^p(\M)$ for $1\le p \le \infty$}\\
	& \text{\it in the sense of interpolation.} \nonumber
	\end{align}
Now we focus a special case $\alpha = -\frac{1}{2}$.
From the definition \eqref{def-L-alpha} and the fact that $J\Delta^{-\frac{1}{2}} = \Delta^{\frac{1}{2}}J = S^*$ it is easy to check that 
for any $x \in \mathfrak{a}^2_0$ we have
	\begin{equation}\label{identify1}
	i^1_{(-\frac{1}{2})}(x)(y) = \varphi^{(-\frac{1}{2})}_x(y) = \varphi(yx)\;\, \text{for any}\;\, y\in \mathfrak{a}^2_0.
	\end{equation}
Moreover, we have an isometry (\cite[Lemma 5.4, Theorem 5.6]{Iz98})
	\begin{equation}\label{identify2}
	L^2(\M) \rightarrow \Hi_\varphi,\; i^2_{(-\frac{1}{2})}(x)\mapsto \Lambda x
	\end{equation}
for $x \in \mathfrak{a}^2_0$.
From now on, we will identify $L^2(\M)$ and $\Hi_\varphi$ using the above isometry. 

We close this subsection by considering an appropriate operator space structure on $L^p(VN(G))$, different from the usual convention.
Using the above interpolation result and Pisier's complex interpolation theory for operator spaces
we can endow an operator space structure on $L^p(\M)$ by
	\begin{equation}\label{OSS-Lp}
	{\mathcal O}L^p(\M) = [\M, \M^{op}_*]_{\frac{1}{p}}\;\, \text{(operator space sense)},
	\end{equation}
where $E^{op}$ implies the {\it opposite} of an operator space $E$ (\cite[section 2.10]{P03}).
Note that the operator space structure on $L^2(\M)$ given by the above is Pisier's operator Hilbert space (shortly $OH$) structure. 
However, the usual operator space structure on $A(G)$ is obtained by considering it as the predual of $VN(G)$.
Because of this disagreement we consider the following operator space structure on $L^p(\M)$.
	$$L^p(\M) = \left\{ \begin{array}{l} \mathcal{O}L^p(\M)^{op} \;\, \text{for}\;\, 1\le p \le 2 \\
	\mathcal{O}L^p(\M) \;\, \text{for}\;\, 2\le p \le \infty. \end{array} \right.$$
If we use reiteration theorem, then we get $\mathcal{O}L^p(\M) = [L^2(\M)_{oh}, \M^{op}_*]_{\frac{2}{p}-1}$ for $1\le p\le 2$,
where $H_{oh}$ is the operator Hilbert space defined on a Hilbert space $H$.
Since the opposite of $OH$ is still $OH$, we get
	\begin{equation}\label{OSS-Lp2}
	L^p(\M) = [L^2(\M)_{oh}, \M_*]_{\frac{2}{p}-1}\; \text{for}\; 1\le p\le 2.
	\end{equation}

\subsection{$A(G)$-module structures on $L^p(VN(G))$}

In order to consider $L^p(VN(G))$ we need a n.s.f. weight on $VN(G)$,
and we will use the Plancherel weight $\varphi$ (\cite[III.3.3]{Bla} or \cite[VII.3]{Ta03}) on $VN(G)$ given by
	$$\varphi(L_f) = f(e)\;\, \text{for any}\;\, f\in C_c(G).$$
We consider the full Tomita algebra $\mathfrak{a}_0$ associated with $\varphi$ as before. 
Recall the following two multiplication maps
	$$\pi_1 : A(G) \prt A(G) \rightarrow A(G),\; f\otimes g \mapsto f g$$
and
	$$\pi_\infty : A(G) \prt VN(G) \rightarrow VN(G),\; f\otimes L_g \mapsto L_{f g}.$$
Now we want to consider an appropriate module structure on $L_p(VN(G))$ using complex interpolation.
The natural candidate would be the interpolation of the above two module actions $\pi_1$ and $\pi_\infty$.
However, they are not compatible in the sense of interpolation.
Indeed, for any $g\in C_c(G)$, it is easy to check that $L_g \in VN(G)$ is identified with
$i^1_{(-\frac{1}{2})}(L_g) = \check{g} \in A(G)$ (recall \eqref{identify1}) in the sense of interpolation, where $\check{g}(x) = g(x^{-1})$.
But $\pi_1(f \otimes \check{g}) = f \check{g} \in A(G)$ and $\pi_\infty (f\otimes L_g) = L_{f g} \in VN(G)$.
Thus, we need to find some other module actions which are compatible in the sense of interpolation.
Actually, we can find appropriate module actions $\pi_2$ and $\pi'_2$ on $L^2(VN(G))$ based on \eqref{identify2},
which are compatible with $\pi_1$ and $\pi_\infty$, respectively, as follows.
	$$\pi_2 : A(G) \prt L^2(VN(G)) \rightarrow L^2(VN(G)),\; f \otimes \Lambda(L_{\check{g}}) \mapsto \Lambda(L_{\check{f} \check{g}})$$
and
	$$\pi'_2 : A(G) \prt L^2(VN(G)) \rightarrow L^2(VN(G)),\; f \otimes \Lambda(L_g) \mapsto \Lambda(L_{f g})$$
for any $f\in A(G)$ and $g\in C_c(G)$.
The difficulty lies in proving $\pi_2$ and $\pi'_2$ are actually complete contractions.

For the detailed discussion it would be better to use Kac algebraic (or locally compact quantum group) notations.
Let $\Gamma$ is the co-multiplication
	$$\Gamma : VN(G) \rightarrow VN(G) \bar{\otimes} VN(G),\; \lambda(s) \mapsto \lambda(s)\otimes \lambda(s).$$
The Plancherel weight $\varphi$ is known to be strongly left and right invariant (\cite{ES92}), which means that
	\begin{equation}\label{left-inv}
	I \otimes \varphi (\Gamma x) = \varphi \otimes I (\Gamma x) = \varphi(x)  1
	\end{equation}
for any $x\in n_\varphi$, where $1$ implies the identity in $VN(G)$.

Now we can read $\pi_2$ as follows.
	\begin{equation}\label{pi2}
	\pi_2 : A(G) \otimes L^2(VN(G)) \rightarrow L^2(VN(G)),\; \om \otimes \Lambda x \mapsto \pi_2(\om \otimes \Lambda x)
	\end{equation}
and for any $y\in n_\varphi$
	$$\left\langle \pi_2(\om \otimes \Lambda x), \overline{\Lambda y} \right\rangle = \om \otimes \varphi \, (\Gamma y^* (1 \otimes x)).$$
Here, we understand $A(G)$ as the collection of bounded normal functionals on $VN(G)$ instead of usual function description,
and $\overline{L^2(VN(G))}$ is the Banach space dual of $L^2(VN(G))$ using the duality
$\left\langle \Lambda x, \overline{\Lambda y} \right\rangle = \varphi(y^* x)$ for any $x,y\in n_\varphi$.
Indeed, for $x = L_{\check{g}}$ and $\om$, the functional defined by $\om(\cdot) = \left\langle f, \cdot\, \right\rangle$, we have
	\begin{align*}
	\om\otimes \varphi \,(\Gamma \lambda(s) (1\otimes x)) & = \om\otimes \varphi \,(\lambda(s) \otimes \lambda(s) L_{\check{g}} )\\
	& = f(s)g(s)\\
	& = \varphi(\lambda(s)L_{\check{f} \check{g}})
	\end{align*}
for any $s\in G$.

In order to show that $\pi_2$ is completely contractive we first need to consider other operator space structures on $L^2(VN(G))$,
namely the row and the column Hilbert space structure. For any Hilbert space $H$ we denote the row and the column Hilbert space on $H$ by
$H_r$ and $H_c$, respectively.

	\begin{prop}\label{prop-sigma2}
	For any locally compact group $G$, $\pi_2$ in \eqref{pi2} extends to the following complete contraction.
		$$\pi_{2,c} : A(G) \prt L^2(VN(G))_c \rightarrow L^2(VN(G))_c.$$
	\end{prop}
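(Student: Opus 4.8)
The plan is to reduce complete contractivity of $\pi_{2,c}$ to that of a single representation of $A(G)$ on $B(L^2(G))$, and then to read off the latter from the unitarity of the Kac--Takesaki operator. Identifying $L^2(VN(G))$ with $\Hi_\varphi$ as in \eqref{identify2}, the map $\pi_{2,c}$ is completely contractive if and only if the associated (a priori only densely defined) map
$$\rho : A(G)\to B(L^2(G)),\qquad \rho(\om)\Lambda x = \pi_2(\om\otimes\Lambda x),$$
extends to a complete contraction; this follows from the operator space adjunction $CB(A(G)\prt L^2(VN(G))_c, L^2(VN(G))_c)\cong CB(A(G), CB(L^2(VN(G))_c, L^2(VN(G))_c))$ combined with the standard complete isometry $CB(H_c,H_c)=B(H)$ for a column Hilbert space. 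So it suffices to produce such an extension with $\norm{\rho}_{cb}\le 1$.

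The key step is to identify $\rho$ with a slice of the fundamental unitary. Let $W\in VN(G)\bar{\otimes} B(L^2(G))$ be the Kac--Takesaki operator of $(VN(G),\Gamma)$ (see \cite{ES92}), determined by $W^*(\Lambda x\otimes\Lambda z)=(\Lambda\otimes\Lambda)(\Gamma(z)(x\otimes 1))$ for $x,z\in n_\varphi$. I claim that $\rho(\om)=(\om\otimes\mathrm{id})(W)$ for every $\om\in A(G)$. To check this I would begin with the defining formula \eqref{pi2},
$$\la \pi_2(\om\otimes\Lambda x),\overline{\Lambda y}\ra=\om\otimes\varphi\,(\Gamma(y^*)(1\otimes x)),$$
rewrite $\Gamma(y^*)=W^*(1\otimes y^*)W$, use the strong left invariance \eqref{left-inv} of the Plancherel weight to collapse the slice in the second leg, and then match the outcome against the defining relation for $W$ on the dense subspace $\Lambda(\mathfrak{a}^2_0)\subseteq\Hi_\varphi$. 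This identification is the main obstacle: the content is entirely in getting the conventions for $W$ versus $W^*$ right and in controlling the modular and domain technicalities attached to $n_\varphi$ and $\mathfrak{a}_0$; once the formula is in hand everything else is formal.

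Finally I would conclude from unitarity. The Effros--Ruan duality $(VN(G)\bar{\otimes} B(L^2(G)))_*=A(G)\prt B(L^2(G))_*$ gives the complete isometry $VN(G)\bar{\otimes} B(L^2(G))=CB(A(G),B(L^2(G)))$, under which an element $u$ corresponds to the slicing map $\om\mapsto(\om\otimes\mathrm{id})(u)$ with cb-norm $\norm{u}$. Since $W$ is a unitary, $\norm{W}=1$, so $\om\mapsto(\om\otimes\mathrm{id})(W)=\rho(\om)$ is a complete contraction $A(G)\to B(L^2(G))$. In particular $\rho$ extends as required with $\norm{\rho}_{cb}\le 1$, and by the reduction of the first paragraph $\pi_{2,c}$ is a complete contraction.
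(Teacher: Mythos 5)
Your argument is correct, but it reaches the conclusion by a genuinely different route from the paper. The paper works directly with the adjoint $\pi_{2,c}^*:\overline{L^2(VN(G))}_r\to CB(L^2(VN(G))_c,VN(G))$ and proves complete contractivity by hand: a Cauchy--Schwarz inequality for Hilbert $C^*$-modules splits the matrix norm into two factors, strong left invariance \eqref{left-inv} collapses each factor to a Gram matrix $[\varphi(y_i^*y_j)]$, resp.\ $[\varphi(x_k^*x_l)]$, and letting the vectors approach an orthonormal basis gives the bound. You instead use the adjunction $CB(A(G)\prt H_c,H_c)\cong CB(A(G),CB(H_c,H_c))$ together with $CB(H_c,H_c)=B(H)$ to reduce everything to the complete contractivity of a single map $A(G)\to B(\Hi_\varphi)$, identify that map with the slice $\om\mapsto(\om\otimes\mathrm{id})(W)$ of the Kac--Takesaki operator, and conclude from the Effros--Ruan duality $VN(G)\,\vnt\,B(\Hi_\varphi)\cong CB(A(G),B(\Hi_\varphi))$ and $\norm{W}=1$. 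All of these ingredients are sound, and the identification you single out as ``the main obstacle'' is in fact shorter than your sketch suggests: for $\om=\la\,\cdot\,\Lambda a,\Lambda c\ra$ the defining relation of $W$ gives directly
\[
\bigl\la W(\Lambda a\otimes\Lambda x),\Lambda c\otimes\Lambda y\bigr\ra
=(\varphi\otimes\varphi)\bigl((c^*\otimes1)\,\Gamma(y^*)(a\otimes x)\bigr)
=(\om\otimes\varphi)\bigl(\Gamma(y^*)(1\otimes x)\bigr),
\]
which is exactly \eqref{pi2}; no detour through $\Gamma(y^*)=W^*(1\otimes y^*)W$ or the strong form of left invariance is needed. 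What your route buys is conceptual economy: the analytic content (left invariance of the Plancherel weight) is packaged once and for all in the unitarity of $W$. What the paper's route buys is self-containedness and reusability: the identical Cauchy--Schwarz computation, run on $\widetilde{\pi}_2$ after the modular twist, also yields the row-space statement $\pi_{2,r}$, whereas your reduction exploits $CB(H_c,H_c)=B(H)$ and hence is tailored to the column structure; the row case would require slicing in the other leg and working with conjugate Hilbert spaces.
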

\begin{proof}
We focus on the adjoint of $\pi_{2,c}$, which is better for concrete calculations. We have
	$$\pi^*_{2,c} : \overline{L^2(VN(G))}_r \rightarrow CB(L^2(VN(G))_c, VN(G))$$
and for any $x, y\in n_\varphi$
	\begin{equation}\label{sigma2}
	\pi^*_{2,c}(\overline{\Lambda y}) (\Lambda x) = I\otimes \varphi \,(\Gamma y^* (1\otimes x)).
	\end{equation}
Consider any $n,m \in \n$ and any finite collections $(A_i) \subseteq M_n$ and $(B_k) \subseteq M_m$.
For any fixed $(x_k), (y_i) \subseteq n_\varphi$ we set $\Phi = I_n \otimes \pi^*_{2,c} (\sum_i A_i \otimes \overline{\Lambda y}_i)$.
Then by the Cauchy-Schwarz inequality for Hilbert $C^*$-modules we have
	\begin{align*}
	\lefteqn{\norm{I_m \otimes \Phi \Big(\sum_k B_k \otimes \Lambda x_k \Big)}_{M_m(M_n(VN(G)))}}\\
	& = \norm{I_m \otimes I_n \otimes I \otimes \varphi \Big( \sum_{i,k} (B_k \otimes A_i \otimes \Gamma y^*_i (1\otimes x_k)) \Big)}\\
	& = \norm{I_m \otimes I_n \otimes I \otimes \varphi
	\Big( \Big[\sum_i 1\otimes A_i \otimes \Gamma y^*_i \Big] \Big[\sum_k B_k \otimes 1 \otimes 1\otimes x_k \Big] \Big)}\\
	& \le \norm{I_m \otimes I_n \otimes I \otimes \varphi
	\Big( \Big[\sum_{i,j} 1\otimes A_i A^*_j \otimes \Gamma y^*_i y_j \Big] \Big)}^{\frac{1}{2}}\\
	& \;\;\;\; \times \norm{I_m \otimes I_n \otimes I \otimes \varphi
	\Big( \Big[\sum_{k,l} B^*_k B_l \otimes 1 \otimes 1\otimes x^*_k x_l \Big] \Big)}^{\frac{1}{2}}\\
	& = \norm{\sum_{i,j} \varphi(y^*_i y_j) A_i A^*_j}^{\frac{1}{2}}_{M_n}
	\times \;\;\norm{\sum_{k,l} \varphi(x^*_k x_l) B^*_k B_l}^{\frac{1}{2}}_{M_m}
	\end{align*}
The last equality comes from \eqref{left-inv}, the left invariance of $\varphi$.
If we choose $(x_k), (y_i)$ so that $(\Lambda x_k), (\Lambda y_i)$ are arbitrarily close to an orthonormal basis on $L^2(VN(G))$,
then we can conclude that $\pi^*_2$ is completely contractive from the above estimation.

\end{proof}

We need one more ingredient, namely the case of $L^2(VN(G))$ with the row Hilbert space structure.
Let $\widetilde{L^2}(VN(G))$ be the completion of $n^*_\varphi$ with the inner product
	$$\left\langle \widetilde{\Lambda} x, \widetilde{\Lambda} y \right\rangle_{\widetilde{}} := \varphi(xy^*),$$
where $\widetilde{\Lambda} : n^*_\varphi \hookrightarrow \widetilde{L^2}(VN(G))$ is the canonical injection.
Then the following map extends to an isometric isomorphism between $L^2(VN(G))$ and $\widetilde{L^2}(VN(G))$.
	$$j : L^2(VN(G)) \rightarrow \widetilde{L^2}(VN(G)), \; \Lambda x \mapsto \widetilde{\Lambda} \sigma^\varphi_{\frac{i}{2}}(x)$$
for any $x\in \mathfrak{a}_0$ (recall \eqref{analytic-elts}).
Indeed, we have for any $x, y\in \mathfrak{a}_0$
	$$\left\langle \widetilde{\Lambda} \sigma^\varphi_{\frac{i}{2}}(x), \widetilde{\Lambda} \sigma^\varphi_{\frac{i}{2}}(y) \right\rangle_{\widetilde{}}
	= \varphi(\sigma^\varphi_{\frac{i}{2}}(x) \sigma^\varphi_{\frac{i}{2}}(y)^*) = \varphi(y^* x) = \left\langle \Lambda x, \Lambda y \right\rangle$$
by \cite[Lem 3.18]{Ta03}.

Now we consider $\widetilde{\pi}_2$, the translation of $\pi_2$ to $\widetilde{L^2}(VN(G))$. Indeed,
	$$\widetilde{\pi}_2 : A(G) \otimes \widetilde{L^2}(VN(G)) \rightarrow \widetilde{L^2}(VN(G))$$
is given by
	\begin{align*}
	\left\langle \widetilde{\pi}_2 (\om \otimes \widetilde{\Lambda} x), \overline{\widetilde{\Lambda} y} \right\rangle
	& = \left\langle \pi_2 (\om \otimes \Lambda \sigma^\varphi_{-\frac{i}{2}}(x)), \overline{\Lambda \sigma^\varphi_{-\frac{i}{2}}(y)} \right\rangle\\
	& = \om \otimes \varphi\, (\Gamma \sigma^\varphi_{-\frac{i}{2}}(y)^*[1 \otimes \sigma^\varphi_{-\frac{i}{2}}(x)])\\
	& = \om \otimes \varphi\, ([(I\otimes \sigma^\varphi_{-\frac{i}{2}})\Gamma y]^*[1\otimes \sigma^\varphi_{-\frac{i}{2}}(x)])\\
	& = \om \otimes \varphi\, ((1\otimes x)\Gamma y^*)
	\end{align*}
for any $x, y\in \mathfrak{a}_0$. We have used a property of Kac algebras (\cite[Corollary 2.5.7]{ES92}) and \cite[Lem 3.18]{Ta03}
in the third and the last equalities, respectively.
Thus, we get
	$$\widetilde{\pi}^*_2(\overline{\widetilde{\Lambda} y})(\widetilde{\Lambda} x) = I\otimes \varphi\, ((1\otimes x)\Gamma y^*)$$
for any $x, y\in \mathfrak{a}_0$, in which the positions of $1\otimes x$ and $\Gamma y^*$ are reversed compared to \eqref{sigma2}.
By the same calculations as in Proposition \ref{prop-sigma2} we get that
	$$\widetilde{\pi}^*_2 : \overline{\widetilde{L^2}(VN(G))}_c \rightarrow CB(\widetilde{L^2}(VN(G))_r, VN(G))$$
is completely contractive.
If we remind that $L^2(VN(G))$ and $\widetilde{L^2}(VN(G))$ are isometric, then we get the following.
	\begin{prop}
	For any locally compact group $G$, $\pi_2$ in \eqref{pi2} extends to the following complete contraction.
		$$\pi_{2,r} : A(G) \prt L^2(VN(G))_r \rightarrow L^2(VN(G))_r.$$
	\end{prop}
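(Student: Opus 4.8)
The plan is to deduce this from the row-space computation that has just been assembled on $\widetilde{L^2}(VN(G))$, by transporting everything along the isometry $j$. First I would pass to adjoints: by operator space duality for the projective tensor product, $\pi_{2,r}$ is completely contractive if and only if
$$\pi_{2,r}^{*} : \overline{L^2(VN(G))}_c \longrightarrow CB(L^2(VN(G))_r, VN(G))$$
is completely contractive, since $A(G)^* = VN(G)$ and $(L^2(VN(G))_r)^{*} = \overline{L^2(VN(G))}_c$. This is the exact row-space mirror of the column statement established in the proof of Proposition~\ref{prop-sigma2}, with the roles of row and column, and of $x$ and $y$, interchanged.

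Next I would use that $j : L^2(VN(G)) \to \widetilde{L^2}(VN(G))$ is a Hilbert space isometry, hence a complete isometry simultaneously for the row, the column, and the associated conjugate structures. Because $\widetilde{\pi}_2$ is by construction the conjugate of $\pi_2$ by $j$, the map $\pi_{2,r}^{*}$ is identified, through these complete isometries, with
$$\widetilde{\pi}_2^{*} : \overline{\widetilde{L^2}(VN(G))}_c \longrightarrow CB(\widetilde{L^2}(VN(G))_r, VN(G)).$$
The complete contractivity of the latter has already been obtained above by running the Cauchy--Schwarz estimate for Hilbert $C^*$-modules from Proposition~\ref{prop-sigma2}, now applied to the reversed product $(1\otimes x)\Gamma y^{*}$ in the formula $\widetilde{\pi}_2^{*}(\overline{\widetilde{\Lambda} y})(\widetilde{\Lambda} x) = I\otimes \varphi\,((1\otimes x)\Gamma y^{*})$ and using the right-invariance half of \eqref{left-inv}. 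Transporting this bound back through $j$ then yields that $\pi_{2,r}^{*}$, and therefore $\pi_{2,r}$, is completely contractive.

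I expect the one point that genuinely needs care to be the bookkeeping in the previous step rather than any new analysis: one must verify that conjugating by $j$ really does exchange the column linearization of $\pi_2$ that produced $\pi_{2,c}$ for the row linearization, i.e. that the passage from $\Lambda$ on $n_\varphi$ to $\widetilde{\Lambda}$ on $n_\varphi^{*}$ via $\sigma^\varphi_{\frac{i}{2}}$ matches the row--column duality at the level of operator space structure. Once this identification is in place, no fresh estimate is required, and the proposition follows directly from Proposition~\ref{prop-sigma2} together with the reversed-order computation of $\widetilde{\pi}_2^{*}$ recorded above.
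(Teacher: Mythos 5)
Your proposal is correct and follows essentially the same route as the paper: transport $\pi_2$ along the isometry $j$ to $\widetilde{L^2}(VN(G))$, compute the adjoint there (which reverses the order to $(1\otimes x)\Gamma y^*$), rerun the Cauchy--Schwarz estimate of Proposition \ref{prop-sigma2} to get complete contractivity of $\widetilde{\pi}_2^*$ into $CB(\widetilde{L^2}(VN(G))_r, VN(G))$, and pull back through $j$, which is a complete isometry for both row and column structures. The only cosmetic slip is attributing the final step to the ``right-invariance half'' of \eqref{left-inv}; since the slice is still $I\otimes\varphi$ applied to $\Gamma(y_iy_j^*)$, it is again the left-invariance identity that is used, but both halves hold for the Plancherel weight so nothing is affected.
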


If we consider the adjoint maps again, then we have two complete contractions
	$$\pi^*_{2,c} : \overline{L^2(VN(G))}_r \rightarrow CB(L^2(VN(G))_c, VN(G))$$
and
	$$\pi^*_{2,r} : \overline{L^2(VN(G))}_c \rightarrow CB(L^2(VN(G))_r, VN(G)).$$
Since, for any Hilbert space $H$, $[H_r, H_c]_\frac{1}{2} = H_{oh}$,
where $[\,\cdot\, , \, \cdot\,]$ is the complex interpolation in the category of operator spaces and 
	$$[CB(E_0, F), CB(E_1, F)]_\frac{1}{2} \hookrightarrow CB([E_0, E_1]_\frac{1}{2}, F)$$
completely contractively for any operator spaces $E_0$, $E_1$, and $F$, we get a complete contraction
	$$\pi^*_2 : \overline{L^2(VN(G))}_{oh} \rightarrow CB(L^2(VN(G))_{oh}, VN(G))$$
by complex interpolation.
Furthermore, if we interpolate $\pi^*_1$ and $\pi^*_2$ we get the natural $A(G)$-module actions on $L^p(VN(G))$ ($1\le p\le 2$).

	\begin{thm}\label{thm-Lp-mod1}
	For any locally compact group $G$ and $1\le p\le 2$
 the following map is a complete contraction, which gives a left operator $A(G)$-module structure on $L^p(VN(G))$.
		$$\pi_p : A(G) \prt L^p(VN(G)) \rightarrow L^p(VN(G))$$
	and $\pi_p(f \otimes i^p(L_{\check{g}})) = i^p(L_{\check{f}\check{g}})$ for any $f\in A(G)$ and $g\in C_c(G)$,
	where $i^p = i^p_{(-\frac{1}{2})}$ in \eqref{identify}.	
	\end{thm}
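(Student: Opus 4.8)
The plan is to produce $\pi_p$ by complex interpolation of the two boundary actions already constructed, carried out at the level of adjoints exactly as signalled before the statement. At $p=2$ we have, from Proposition \ref{prop-sigma2} and its row analogue, the complete contraction $\pi_2^* : \overline{L^2(VN(G))}_{oh} \to CB(L^2(VN(G))_{oh}, VN(G))$. At $p=1$ we have the multiplication $\pi_1 : A(G) \prt A(G) \to A(G)$, which is completely contractive because $A(G)$ is a completely contractive Banach algebra; dualizing via $(A(G)\prt A(G))^* = CB(A(G),A(G)^*)$ gives the complete contraction $\pi_1^* : VN(G) \to CB(A(G), VN(G))$, namely the dual action of $A(G)$ on $VN(G)$. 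Since $\pi_1^*$ and $\pi_2^*$ are the adjoints of $\pi_1$ and $\pi_2$, which by construction of $\pi_2$ (recall \eqref{identify2}) agree on the weak$^*$-dense subspace $\Lambda(\mathfrak{a}^2_0)$ lying inside both $A(G)$ and $L^2(VN(G))$, the pair $(\pi_2^*, \pi_1^*)$ is a compatible couple of complete contractions, and I would interpolate it with parameter $\eta = \frac{2}{p}-1$.

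For the domain, I would use the operator space structure \eqref{OSS-Lp2} and the fact that operator space duality commutes with complex interpolation to get
\[
[\,\overline{L^2(VN(G))}_{oh},\, VN(G)\,]_{\eta} \;=\; [\,L^2(VN(G))_{oh},\, A(G)\,]^*_{\eta} \;=\; L^p(VN(G))^*,
\]
using $(L^2(VN(G))_{oh})^* = \overline{L^2(VN(G))}_{oh}$ and $A(G)^* = VN(G)$. For the range I would invoke the completely contractive inclusion $[CB(E_0,F),CB(E_1,F)]_\eta \hookrightarrow CB([E_0,E_1]_\eta,F)$ recorded just above, with $E_0 = L^2(VN(G))_{oh}$, $E_1 = A(G)$, $F = VN(G)$, to obtain
\[
[\,CB(L^2(VN(G))_{oh}, VN(G)),\, CB(A(G), VN(G))\,]_\eta \hookrightarrow CB(L^p(VN(G)), VN(G)).
\]
Interpolation then yields a complete contraction $\pi_p^* : L^p(VN(G))^* \to CB(L^p(VN(G)), VN(G)) = (A(G)\prt L^p(VN(G)))^*$, the final identification being $(A(G)\prt L^p(VN(G)))^* = CB(L^p(VN(G)), A(G)^*)$.

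Finally, $\pi_p^*$ is the adjoint of a map $\pi_p : A(G)\prt L^p(VN(G)) \to L^p(VN(G))$, so $\|\pi_p\|_{cb} = \|\pi_p^*\|_{cb} \le 1$ and $\pi_p$ is a complete contraction. The formula $\pi_p(f\otimes i^p(L_{\check g})) = i^p(L_{\check f\check g})$ holds at the endpoints $\eta=0,1$ and propagates along the interpolation scale by continuity; the same density argument gives $\pi_p(f_1\otimes \pi_p(f_2\otimes \xi)) = \pi_p(f_1 f_2\otimes\xi)$, since $\check{f_1}(x)\,\check{f_2}(x) = (f_1 f_2)(x^{-1})$, so that $\pi_p$ is genuinely a left $A(G)$-module action. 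I expect the remaining difficulty to be bookkeeping rather than analysis: one must check that the couple $(L^2(VN(G))_{oh}, A(G))$ is regular (its intersection dense in each endpoint, which follows from density of $\Lambda(\mathfrak{a}^2_0)$) so that the interpolation--duality theorem applies, and that $\pi_1^*$ and $\pi_2^*$ really do restrict to the same map on this dense subspace. The genuinely hard analytic content---complete contractivity of $\pi_2$ on both the row and column structures---is already in place, so the theorem reduces to this formal interpolation.
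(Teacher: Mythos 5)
Your proposal is correct and follows essentially the same route as the paper: the paper obtains $\pi_2^*:\overline{L^2(VN(G))}_{oh}\to CB(L^2(VN(G))_{oh},VN(G))$ by interpolating the row and column versions, and then simply states that interpolating $\pi_1^*$ and $\pi_2^*$ (against the scale $L^p(\M)=[L^2(\M)_{oh},\M_*]_{\frac{2}{p}-1}$) yields the module action for $1\le p\le 2$. Your write-up just fills in the duality and compatibility bookkeeping that the paper leaves implicit.
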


Now we move to the case $p \ge 2$ and focus on $\pi'_2$, which is compatible with $\pi_\infty$.
Using the above Kac algebraic notation we can read $\pi'_2$ as follows.
	\begin{equation}\label{pi2'}
	\pi'_2 : A(G) \otimes L^2(VN(G)) \rightarrow L^2(VN(G)),\; \om \otimes \Lambda x \mapsto \Lambda \otimes \om\, (\Gamma x)
	\end{equation}
for any $\om \in A(G)$ and $x \in n_\varphi$.
If we consider the adjoint map again, then we have
	$$\pi'^*_2 : \overline{L^2(VN(G))}_r \rightarrow CB(L^2(VN(G))_c, VN(G)).$$
Thus, for any $x,y \in n_\varphi$ we have
	$$\pi'^*_2(\overline{\Lambda y})(\Lambda x) = \varphi \otimes I ((y^*\otimes 1)\Gamma x).$$
If we repeat all the above calculations using \eqref{left-inv}, the right invariance of $\varphi$, then we get the following similar results.
	\begin{thm}\label{thm-Lp-mod2}	
	For any locally compact group $G$, $\pi'_2$ in \eqref{pi2'} extend to the following complete contractions.
		$$\pi'_{2,c} : A(G) \prt L^2(VN(G))_c \rightarrow L^2(VN(G))_c$$
	and
		$$\pi'_{2,r} : A(G) \prt L^2(VN(G))_r \rightarrow L^2(VN(G))_r.$$
	Moreover, for $2\le p < \infty$, we get the following complete contraction, which gives a left completely bounded $A(G)$-module structure on $L^p(VN(G))$.
		$$\pi'_p : A(G) \prt L^p(VN(G)) \rightarrow L^p(VN(G)),$$
	where $\pi'_p(f \otimes i^p(L_g)) = i^p(L_{f g})$ for any $f\in A(G)$ and $g\in C_c(G)$.
	\end{thm}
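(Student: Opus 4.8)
The plan is to mirror, step by step, the three-stage argument that produced $\pi_2$ and the module actions $\pi_p$ for $1\le p\le 2$ (Proposition \ref{prop-sigma2} through Theorem \ref{thm-Lp-mod1}), replacing the left invariance of the Plancherel weight by its right invariance throughout, and interpolating against $\pi_\infty$ rather than $\pi_1$ at the remaining endpoint.

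First I would treat the column case. Passing to the adjoint $\pi'^*_2 : \overline{L^2(VN(G))}_r \rightarrow CB(L^2(VN(G))_c, VN(G))$, given by $\pi'^*_2(\overline{\Lambda y})(\Lambda x) = \varphi\otimes I\,((y^*\otimes 1)\Gamma x)$, I would run the Cauchy--Schwarz estimate for Hilbert $C^*$-modules exactly as in the proof of Proposition \ref{prop-sigma2}, but now factoring the matrix $\varphi\otimes I\,((y_i^*\otimes 1)\Gamma x_k)$ so that the \emph{right} invariance (the second identity in \eqref{left-inv}) collapses the two resulting square-root factors into $\norm{\sum_{i,j}\varphi(y_i^*y_j)A_iA_j^*}^{\frac12}_{M_n}$ and $\norm{\sum_{k,l}\varphi(x_k^*x_l)B_k^*B_l}^{\frac12}_{M_m}$. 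Choosing $(\Lambda x_k)$ and $(\Lambda y_i)$ arbitrarily close to orthonormal bases then shows that $\pi'_{2,c}$ is a complete contraction.

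For the row case I would reuse the detour through $\widetilde{L^2}(VN(G))$: translating $\pi'_2$ by the isometry $j$ to $\widetilde{\pi}'_2$ and applying the modular property of $\sigma^\varphi_{\frac{i}{2}}$ together with \eqref{left-inv} produces the formula for $\widetilde{\pi}'^*_2$ with the positions of $1\otimes x$ and $\Gamma y^*$ reversed, so the identical Cauchy--Schwarz computation gives that $\widetilde{\pi}'^*_2 : \overline{\widetilde{L^2}(VN(G))}_c \to CB(\widetilde{L^2}(VN(G))_r, VN(G))$ is completely contractive; transporting back through $j$ yields $\pi'_{2,r}$. Interpolating the two adjoints at $\theta=\frac12$, using $[H_r,H_c]_{\frac12}=H_{oh}$ and the completely contractive inclusion $[CB(E_0,F),CB(E_1,F)]_{\frac12}\hookrightarrow CB([E_0,E_1]_{\frac12},F)$, then produces the complete contraction $\pi'_2$ on $L^2(VN(G))$ with its operator Hilbert space (hence standard $A(G)$-predual) structure.

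Finally I would interpolate $\pi'_2$ against $\pi_\infty$. The decisive point, and the reason this is the correct endpoint in the range $2\le p<\infty$, is compatibility: under the identification $i^2(L_g)\leftrightarrow \Lambda(L_g)$ of \eqref{identify2}, both $\pi'_2(f\otimes\Lambda(L_g))=\Lambda(L_{fg})$ and $\pi_\infty(f\otimes L_g)=L_{fg}$ restrict to $f\otimes L_g\mapsto L_{fg}$ on the common dense domain generated by $g\in C_c(G)$, whereas $\pi_1$ and $\pi_\infty$ were \emph{not} compatible. Complex interpolation of $A(G)\prt VN(G)$ and $A(G)\prt L^2(VN(G))$ in the second variable, at the parameter giving $L^p(VN(G))$ for $2\le p<\infty$, then yields the complete contraction $\pi'_p$ with the stated formula $\pi'_p(f\otimes i^p(L_g))=i^p(L_{fg})$, and the module axioms pass to all of $L^p(VN(G))$ by density since they already hold on $\{L_g : g\in C_c(G)\}$. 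I expect the only genuine obstacle to be bookkeeping: checking that exchanging left invariance for right invariance really does interchange the row and column roles so that the factored Cauchy--Schwarz estimate still closes, and confirming the endpoint compatibility precisely; everything else is a faithful transcription of the $\pi_2$ argument.
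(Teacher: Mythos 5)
Your proposal is correct and follows essentially the same route as the paper: the paper's own proof consists precisely of repeating the Cauchy--Schwarz/Hilbert $C^*$-module estimate of Proposition \ref{prop-sigma2} and the $\widetilde{L^2}(VN(G))$ detour with the right invariance of $\varphi$ in place of the left, interpolating at $\theta=\tfrac12$ via $[H_r,H_c]_{\frac12}=H_{oh}$, and then interpolating $\pi'_2$ against $\pi_\infty$, with which it is compatible on the dense set $\{L_g : g\in C_c(G)\}$. Your identification of the two square-root factors and of the endpoint compatibility is exactly the bookkeeping the paper leaves implicit.
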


\subsection{Operator Projectivity of $L^p(VN(G))$ for $1<p<\infty$}\label{noncomLp}

	\begin{lem}\label{lem-essential}
	Let $G$ be a discrete group. Then $L^p(VN(G))$ ($1 < p <\infty$) is an essential $A(G)$-module.
	\end{lem}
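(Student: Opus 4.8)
The plan is to prove essentiality by exhibiting a dense set of elements that already lie in $A(G)\cdot L^p(VN(G))$. Since $G$ is discrete, the Plancherel weight $\varphi$ is a finite normal faithful trace (indeed $\varphi(1)=1$), so $L^p(VN(G))$ is the usual tracial noncommutative $L^p$-space of $VN(G)$, and the linear span of $\{i^p(\lambda(s)) : s\in G\}$ is dense in $L^p(VN(G))$ for every $1\le p<\infty$. Thus it suffices to show that each generator $i^p(\lambda(s))$ belongs to $A(G)\cdot L^p(VN(G))$.

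First I would record that for discrete $G$ the indicator function $1_{\{s\}}$ is a coefficient function of the left regular representation, namely $1_{\{s\}}(x)=\la \lambda(x)\delta_e,\delta_s\ra$, so that $\delta_s:=1_{\{s\}}\in A(G)$ with $\|\delta_s\|_{A(G)}=1$ and $L_{\delta_s}=\lambda(s)$. Pointwise one then has $\delta_s\delta_s=\delta_s$ and $\check{\delta}_s=\delta_{s^{-1}}$, which are the only algebraic identities needed below.

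Next I would feed these into the explicit module actions. For $2\le p<\infty$, Theorem \ref{thm-Lp-mod2} gives $\pi'_p(f\otimes i^p(L_g))=i^p(L_{fg})$; taking $f=g=\delta_s$ yields $fg=\delta_s$ and hence $\pi'_p(\delta_s\otimes i^p(\lambda(s)))=i^p(\lambda(s))$. For $1<p\le 2$, Theorem \ref{thm-Lp-mod1} gives $\pi_p(f\otimes i^p(L_{\check g}))=i^p(L_{\check f\check g})$; choosing $g=\delta_{s^{-1}}$ so that $\check g=\delta_s$ and $L_{\check g}=\lambda(s)$, and $f=\delta_{s^{-1}}$ so that $\check f=\delta_s$ and $\check f\check g=\delta_s$, yields $\pi_p(\delta_{s^{-1}}\otimes i^p(\lambda(s)))=i^p(\lambda(s))$. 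In either range each $i^p(\lambda(s))$ lies in $A(G)\cdot L^p(VN(G))$, and since these span a dense subspace we conclude that $A(G)\cdot L^p(VN(G))$ is dense, i.e. $L^p(VN(G))$ is essential.

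The matching of the action formulas is essentially bookkeeping with the maps $g\mapsto\check g$ and $s\mapsto s^{-1}$, so the only point needing genuine care is the density of $\text{span}\{i^p(\lambda(s))\}$ in $L^p(VN(G))$. I would obtain this from the weak$^*$-density of the group algebra $\mathbb{C}[G]=\text{span}\{\lambda(s)\}$ in $VN(G)$: by Kaplansky density a $\|\cdot\|_\infty$-bounded net in $\mathbb{C}[G]$ converges strongly to any prescribed element of $VN(G)$, which gives $L^2$-convergence through the trace, and a standard interpolation/H\"older argument upgrades this to $L^p$-convergence of the bounded net; as $VN(G)$ is itself dense in $L^p(VN(G))$ for $1\le p<\infty$, density of the generators follows. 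This last density statement is the main, albeit routine, obstacle.
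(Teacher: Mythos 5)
Your proof is correct, and its overall shape matches the paper's: exhibit a dense subspace of $L^p(VN(G))$ whose elements visibly lie in $A(G)\cdot L^p(VN(G))$ via the explicit formulas $\pi_p(f\otimes i^p(L_{\check g}))=i^p(L_{\check f\check g})$ and $\pi'_p(f\otimes i^p(L_g))=i^p(L_{fg})$; your bookkeeping with $\delta_s$, $\check{\delta}_{s^{-1}}=\delta_s$ and $\delta_s\delta_s=\delta_s$ is exactly the right way to use those formulas. Where you diverge is in how the density is obtained. The paper gets it in one line by citing the Junge--Ruan identification $[A(G),C^*_r(G)]_{1/p}=L^p(VN(G))$ together with the general fact that $A_0\cap A_1$ is dense in $[A_0,A_1]_\theta$ (Bergh--L\"ofstr\"om, Theorem 4.2.2), so the dense set is all of $C^*_r(G)=A(G)\cap C^*_r(G)$. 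You instead exploit that for discrete $G$ the Plancherel weight is a finite faithful normal trace, identify $L^p(VN(G))$ with the tracial noncommutative $L^p$-space, and push $\operatorname{span}\{\lambda(s)\}$ into $L^p$ by Kaplansky density plus the bounded-net estimate $\|y\|_p^p\le\|y\|_\infty^{p-2}\|y\|_2^2$ for $p>2$ (and $\|y\|_p\le\|y\|_2$ for $p\le 2$). Both are sound; the paper's route is shorter and stays entirely inside the interpolation framework it has already set up (and is the same mechanism it reuses elsewhere, e.g.\ in Theorem \ref{thm-free-Lp}), while yours is more self-contained and elementary but implicitly relies on the standard identification of the Haagerup/Izumi $L^p$-space with the tracial one in the finite case --- a point worth flagging explicitly since the paper's $i^p=i^p_{(-1/2)}$ is defined through the interpolation machinery. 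No gap either way.
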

\begin{proof}
Since we have $[A(G), C^*_r(G)]_{\frac{1}{p}} = L^p(VN(G))$ (\cite[Proposition 3.1]{JR})
we know $C^*_r(G) = A(G) \cap C^*_r(G)$ is dense in $L^p(VN(G))$ (\cite[Theorem 4.2.2]{BL76}).
Thus, $L^p(VN(G))$ is essential for $1 < p < \infty$ since $\pi_p(f \otimes i^p(L_{\check{g}})) = i^p(L_{\check{f} \check{g}})$ ($1< p \le 2$)
and $\pi'_p(f \otimes i^p(L_g)) = i^p(L_{f g})$ ($2\le p <\infty$) for any $f\in A(G)$ and $g\in C_c(G)$.
\end{proof}

Since it is well known that $A(G)$ is operator biprojective when $G$ is discrete (\cite{W02}) we get the following by Theorem \ref{thm-essential}.

	\begin{thm}\label{thm-positive-Lp}
	If $G$ is a discrete and amenable group, then $L^p(VN(G))$ ($1<p<\infty$) is operator projective in $A(G)$-{\bf mod}.
	\end{thm}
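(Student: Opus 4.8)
The plan is to verify the three hypotheses of Theorem \ref{thm-essential} and then invoke it directly; essentially all the work has already been assembled in the preceding results, so the proof is a matter of collecting them. Recall that Theorem \ref{thm-essential} asserts that an essential completely bounded left module over an operator biprojective completely contractive Banach algebra with a b.a.i.\ is automatically operator projective.

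First I would record that $A(G)$ is an operator biprojective completely contractive Banach algebra. That $A(G)$ is completely contractive is noted in the Preliminaries (following \cite{Eymard}), and operator biprojectivity for discrete $G$ is precisely the result of Wood \cite{W02} cited immediately before the statement. Second, since $G$ is amenable, Leptin's theorem \cite{Lep} supplies a bounded approximate identity for $A(G)$, so the b.a.i.\ hypothesis of Theorem \ref{thm-essential} is met. (Note that both the discreteness and the amenability of $G$ are used here, for distinct reasons: discreteness yields biprojectivity, amenability yields the b.a.i.)

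Third I would confirm that $L^p(VN(G))$ is an essential completely bounded left $A(G)$-module for $1<p<\infty$. The completely bounded module action is furnished by $\pi_p$ of Theorem \ref{thm-Lp-mod1} for $1\le p\le 2$ and by $\pi'_p$ of Theorem \ref{thm-Lp-mod2} for $2\le p<\infty$; the two descriptions are consistent at $p=2$ because both reduce to the Plancherel identification on $C_c(G)$. Essentiality for discrete $G$ is exactly Lemma \ref{lem-essential}, which in turn rests on the interpolation identity $[A(G),C^*_r(G)]_{1/p}=L^p(VN(G))$ from \cite{JR} together with the density of $C_c(G)$.

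With all three hypotheses in hand, Theorem \ref{thm-essential} yields that $L^p(VN(G))$ is operator projective in $A(G)$-{\bf mod}, completing the argument. I do not expect a genuine obstacle in this theorem: the real content is distributed across the module-structure results of the previous subsection (the delicate complete contractivity estimates in Propositions \ref{prop-sigma2} and its row-space analogue) and the biprojectivity input of \cite{W02}. The only points requiring care are to confirm that the ranges of $p$ in Theorems \ref{thm-Lp-mod1} and \ref{thm-Lp-mod2} together cover all of $(1,\infty)$ and that $L^p(VN(G))$ is genuinely essential rather than merely nondegenerate --- both of which are settled by Lemma \ref{lem-essential}.
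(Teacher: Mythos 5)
Your proposal is correct and follows exactly the paper's route: the paper also deduces the theorem from Theorem \ref{thm-essential}, using Wood's operator biprojectivity of $A(G)$ for discrete $G$, Leptin's theorem for the b.a.i., and Lemma \ref{lem-essential} for essentiality. Your write-up merely makes explicit the verification of hypotheses that the paper leaves implicit.
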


For $2\le p\le \infty$ we have a better description of $\pi'_p$. Let
	$$\lambda : L^1(G) \rightarrow VN(G) \subseteq B(L^2(G)),\, f\mapsto L_f$$
be the left regular representation of $L^1(G)$.
Then we can consider the unitary map $\lambda_2 : L^2(G) \rightarrow L^2(VN(G))$, which is defined by
	$$\lambda_2(f) = \Lambda (L_f)$$
for any $f\in L^1(G) \cap L^2(G)$.
Since $\lambda$ and $\lambda_2$ are compatible in the sense of interpolation we get complete contractions
	$$\lambda_p : L^p(G) \rightarrow L^{p'}(VN(G))$$
for $1\le p\le 2$ by complex interpolation,
where $\frac{1}{p} + \frac{1}{p'} = 1$ and $\lambda_p(f) = i^{p'}(L_f)$ for any $f \in L^1(G) \cap L^2(G)$.
Now we can describe $\pi'_p$ for $2\le p \le \infty$ as follows.
	$$\pi'_p(f \otimes \lambda_{p'}(g)) = \lambda_{p'}(f g)$$ for any $f\in A(G)$ and $g\in L^{p'}(G)$, where $\frac{1}{p} + \frac{1}{p'} = 1$.

We also need the following transference result.
	\begin{lem}\label{lem-opensubgp-Lp}
	Let $H$ be an open subgroup of a locally compact group $G$ and $1<p<\infty$.
	If $L^p(VN(G))$ is operator projective in $A(G)$-{\bf mod}, then $L^p(VN(H))$ is operator projective in $A(H)$-{\bf mod}.
	\end{lem}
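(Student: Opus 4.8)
The plan is to adapt the proof of Lemma \ref{lem-opensubgp}, replacing $C^*_r(\cdot)$ by $L^p(VN(\cdot))$ and producing the required transference maps by complex interpolation. Since $H$ is open in $G$, \cite{ForWood} supplies the completely isometric extension-by-zero map $j_1 : A(H) \hookrightarrow A(G)$ and the completely contractive restriction $R_1 : A(G) \to A(H)$, with $R_1 \circ j_1 = \mathrm{id}_{A(H)}$. Their adjoints are the maps $R_\infty = j_1^* : VN(G) \to VN(H)$ and $j_\infty = R_1^* : VN(H) \to VN(G)$ of Lemma \ref{lem-opensubgp}, acting on convolution operators by $L_g \mapsto L_{g|_H}$ and $L_f \mapsto L_{\tilde f}$, and satisfying $R_\infty \circ j_\infty = \mathrm{id}_{VN(H)}$. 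Using the identification $i^1(L_g) = \check g$ of \eqref{identify1}, one checks that on the dense set of convolution operators each of the pairs $(j_1^*, R_1)$ and $(R_1^*, j_1)$ acts compatibly (both send $L_g \leftrightarrow \check g$ to $L_{g|_H} \leftrightarrow \check g|_H$, resp.\ $L_f \leftrightarrow \check f$ to $L_{\tilde f} \leftrightarrow \widetilde{\check f}$), so each is a morphism of compatible interpolation couples.

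By Pisier's complex interpolation for operator spaces I then obtain complete contractions $R_p : L^p(VN(G)) \to L^p(VN(H))$ with $R_p(i^p(L_g)) = i^p(L_{g|_H})$ and $j_p : L^p(VN(H)) \to L^p(VN(G))$ with $j_p(i^p(L_f)) = i^p(L_{\tilde f})$, and $R_p \circ j_p = \mathrm{id}_{L^p(VN(H))}$ by interpolating the two endpoint identities. Here one must respect the two regimes: the action $\pi_p$ for $1<p\le 2$ is interpolated from the endpoints $p=1,2$ while $\pi'_p$ for $2\le p<\infty$ uses the endpoints $p=2,\infty$ (Theorems \ref{thm-Lp-mod1} and \ref{thm-Lp-mod2}), so I carry out the interpolation of the transference maps on the matching endpoint pair in each range, the two constructions agreeing at $p=2$, while tracking the opposite operator space structures of \eqref{OSS-Lp}.

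These maps intertwine the module actions at each defining endpoint: at the $VN$ endpoint $R_\infty(f\cdot L_g) = L_{(fg)|_H} = R_1(f)\cdot R_\infty(L_g)$ and at the $A(G)$ endpoint $R_1(fu) = R_1(f)R_1(u)$, with the symmetric statements for $j_p$ and the $A(H)$-action defined through $j_1$; since $R_1$ is an algebra homomorphism these squares commute, and interpolating them makes $R_p$ a completely bounded $A(G)$-module map and $j_p$ a completely bounded $A(H)$-module map. As $L^p(VN(G))$ need not be essential for a general locally compact group, I use the unitized form of Proposition \ref{prop-woods}: operator projectivity of $L^p(VN(G))$ furnishes a completely bounded $A(G)$-module map $\rho_G : L^p(VN(G)) \to A(G)_+ \prt L^p(VN(G))$ right-inverting $(\pi_G)_+$. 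Exactly as in diagram \eqref{diagram1}, I set
$$\rho_H := (R_{1,+} \otimes R_p)\circ \rho_G \circ j_p : L^p(VN(H)) \to A(H)_+ \prt L^p(VN(H)),$$
which is a completely bounded $A(H)$-module map, and the commuting square $(\pi_H)_+\circ (R_{1,+}\otimes R_p) = R_p \circ (\pi_G)_+$ gives
$$(\pi_H)_+\circ \rho_H = R_p\circ (\pi_G)_+\circ \rho_G \circ j_p = R_p\circ j_p = \mathrm{id}_{L^p(VN(H))},$$
so $L^p(VN(H))$ is operator projective in $A(H)$-\m\ by Proposition \ref{prop-woods}.

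I expect the main obstacle to be precisely the construction and verification of the transference maps $R_p$ and $j_p$. Unlike the $C^*_r$/$VN$ setting of Lemma \ref{lem-opensubgp}, where they are concrete restrictions and extensions of operators, here they exist only through interpolation, so one must confirm that the endpoint maps form a morphism of compatible couples relative to the identification \eqref{identify1}, match this to the two interpolation regimes meeting at $p=2$, and ensure that the module-intertwining squares commute at the endpoints so that the module-map property persists after interpolation, all while keeping track of the opposite operator space structures in \eqref{OSS-Lp}. Once these maps are in hand, the concluding diagram chase is formally identical to that of Lemma \ref{lem-opensubgp}.
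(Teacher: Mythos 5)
Your proposal is correct and follows essentially the same route as the paper: interpolate the restriction map $R_1:A(G)\to A(H)$ and its $VN$-level counterpart $R_\infty:VN(G)\to VN(H)$ (compatible because $R_1(\check g)=\check g|_H$ corresponds to $L_g\mapsto L_{g|_H}$ under \eqref{identify1}) to obtain $R_p$ and $j_p$ with $R_p\circ j_p=\mathrm{id}$, and then run the diagram chase of \eqref{diagram1}. The paper's proof is just a terser version of this; your additional care about the two interpolation regimes meeting at $p=2$, the verification that the intertwining squares commute at the endpoints, and the use of the unitized form of Proposition \ref{prop-woods} are all consistent elaborations rather than a different argument.
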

\begin{proof}
We interpolate the following two completely contractive projections from Lemma \ref{lem-opensubgp}
	$$R_1 : A(G) \rightarrow A(H),\, g \mapsto g|_H,$$
and
	$$R_\infty : VN(G) \rightarrow VN(H),\, L_f \mapsto L_{f|_H}.$$
Note that $R_1(\check{g}) = \check{g}|_H$, so that $R_1$ and $R_\infty$ are compatible in the sense of interpolation.
Then by complex interpolation we get a completely contractive projection
	$$R_p : L^p(VN(G)) \rightarrow L^p(VN(H)),\, i^p(L_f) \mapsto i^p(L_{f|_H})$$
for $1<p<\infty$.
Similarly we get a completely contractive inclusion $$j_p : L^p(VN(H)) \rightarrow L^p(VN(G)), i^p(L_f) \mapsto i^p(L_{\widetilde{f}}),$$
where $\widetilde{f}$ is the extension of $f$ to $G$ by assigning 0 outside of $H$. Indeed, $j_p$ is a complete isometry since $R_p\circ j_p = I$.

Thus, by following a diagram similar to \eqref{diagram1} we get the conclusion.

\end{proof}	

	\begin{prop}\label{prop-trivial-mapping-space-Lp}
	Let $G$ be a non-discrete group. Then
		$${}_{A(G)}B(L^p(VN(G)), A(G)) = 0$$
	for $2\le p< \infty$.
	\end{prop}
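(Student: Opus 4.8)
The plan is to follow the proof of Proposition \ref{prop-trivial-mapping-space} almost verbatim, replacing the left regular representation $\lambda : L^1(G) \to C^*_r(G)$ by the complete contraction $\lambda_{p'} : L^{p'}(G) \to L^p(VN(G))$ (where $\frac{1}{p}+\frac{1}{p'}=1$, so $1<p'\le 2$), and the $C^*_r(G)$-action by the interpolated action $\pi'_p$, which for $2\le p<\infty$ satisfies $\pi'_p(f\otimes \lambda_{p'}(h)) = \lambda_{p'}(fh)$ for $f\in A(G)$ and $h\in L^{p'}(G)$ by Theorem \ref{thm-Lp-mod2}. Fix $T\in {}_{A(G)}B(L^p(VN(G)),A(G))$ and an open set $K$ with compact closure and positive Haar measure. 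Since $K$ has finite measure, $1_K\in L^{p'}(G)$, so $g:=T(\lambda_{p'}(1_K))\in A(G)$ is well defined, and because $T$ is a left module map the elementary identity
\[ T(\lambda_{p'}(f1_K)) = T(f\cdot \lambda_{p'}(1_K)) = fg \qquad (f\in A(G)) \]
holds, where $fg$ is the pointwise product in $A(G)$.

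First I would extend this identity to all $f\in L^{p'}(G)$. As $A(G)\cap C_c(G)$ is dense in $L^{p'}(G)$, pick $f_i\in A(G)$ with $f_i\to f$ in $L^{p'}(G)$ and, after passing to a subsequence, $f_i\to f$ almost everywhere. Since $\lambda_{p'}$ is contractive, $\lambda_{p'}(f_i1_K)\to \lambda_{p'}(f1_K)$ in $L^p(VN(G))$, whence $f_ig = T(\lambda_{p'}(f_i1_K))\to T(\lambda_{p'}(f1_K))$ in $A(G)$ and therefore uniformly; on the other hand $f_ig\to fg$ almost everywhere. Comparing limits gives
\[ T(\lambda_{p'}(f1_K)) = fg \quad \text{a.e.} \qquad (f\in L^{p'}(G)). \]

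From here two conclusions follow. Taking $f=1_K$ and using $1_K1_K=1_K$ yields $g=1_Kg$ a.e., so $g=0$ almost everywhere off $K$. To handle $K$ itself, consider $\Phi := j\circ T\circ \lambda_{p'} : L^{p'}(G)\to A(G)\hookrightarrow L^\infty(G)$, where $j$ is the canonical inclusion; then $\Phi$ is bounded and $\Phi(f)=fg$ a.e. for every $f$ supported in $K$. Here non-discreteness of $G$ is essential: if $g(x)\neq 0$ for some $x\in K$, choose neighbourhoods $U_\eps\subseteq K$ of $x$ with Haar measure $\eps\to 0$ and set $f_\eps = \eps^{-1/p'}1_{U_\eps}$, so that $\norm{f_\eps}_{p'}=1$ while, by continuity of $g\in A(G)\subseteq C_0(G)$, one has $\norm{\Phi(f_\eps)}_\infty = \norm{f_\eps g}_\infty \ge \frac{1}{2}\abs{g(x)}\,\eps^{-1/p'}\to\infty$, contradicting $\norm{\Phi}<\infty$. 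Hence $g=0$ on $K$, and together with the previous step $g\equiv 0$.

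Finally, for an arbitrary nonzero $f_0\in C_c(G)$ put $K=\{x : f_0(x)\neq 0\}$; as $G$ is non-discrete this $K$ is open, relatively compact and of positive measure, so the associated $g$ vanishes identically and the extended identity gives $T(\lambda_{p'}(f_0)) = T(\lambda_{p'}(f_01_K)) = f_0g = 0$. Since $\{\lambda_{p'}(f) : f\in C_c(G)\}$ is dense in $L^p(VN(G))$, continuity of $T$ forces $T=0$. I expect the only genuinely delicate point to be confirming that the interpolated action $\pi'_p$ is intertwined with $\lambda_{p'}$ in the stated form, which is exactly where the hypothesis $2\le p<\infty$ enters: for $1<p<2$ the action $\pi_p$ is twisted by $g\mapsto \check{g}$, and the argument would have to be recast accordingly. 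The remaining steps — the $L^{p'}$-density extension of the module identity and the measure estimate $\norm{f_\eps}_{p'}=1$ with the sup blowing up — are routine once non-discreteness is invoked.
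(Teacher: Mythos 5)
Your proposal is correct and follows essentially the same route as the paper's own proof, which explicitly reduces to the argument of Proposition \ref{prop-trivial-mapping-space} with $\lambda$ replaced by $\lambda_{p'}$ and the action $\pi'_p$ in place of $\pi_\infty$; you have simply filled in the details (the $L^{p'}$-density extension, the blow-up estimate on $\Phi$, and the final density step) that the paper leaves as "similarly." Your closing remark correctly identifies why the hypothesis $2\le p<\infty$ is used, namely that $\pi'_p$ intertwines with $\lambda_{p'}$ as pointwise multiplication, which is exactly the compatibility the paper relies on.
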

	\begin{proof}
The proof is basically the same as in Proposition \ref{prop-trivial-mapping-space}.
We pick any $T \in {}_{A(G)}B(L^p(VN(G)), A(G))$ and any non-zero $h \in C_c(G)$. 
Then since $G$ is not discrete, $K = \{h\neq 0\}$ is an open set with compact closure and positive measure.
Then, with $g = T(\lambda_{p'}(1_K))$ we can similarly show that
	$$T(\lambda_{p'}(f 1_K)) = f T(\lambda_{p'}(1_K)) = f g\;\text{a.e.}$$
for any $f\in L^{p'}(G)$.

Now we consider the following composition of bounded maps.
	$$\Phi : L^{p'}(G) \stackrel{\lambda_p}{\longrightarrow} L_p(VN(G)) \stackrel{T}{\longrightarrow} A(G) \stackrel{j}{\hookrightarrow} L^{\infty}(G),$$
where $j$ is canonical embedding. Then, we have
	$$\Phi(f 1_K) = f g$$
for any $f\in C_c(G)$ with $\text{supp}(f) \subseteq K$. As in the proof of \ref{prop-trivial-mapping-space} we can show that $g=0$.
Thus, $T(\lambda_{p'}(h)) = T(\lambda_{p'}(h 1_K)) = hg = 0$, and by a standard density argument we have $T = 0$, so that
	$${}_{A(G)}B(L^p(VN(G)), A(G)) = 0.$$
\end{proof}

By a similar argument as in Theorem \ref{thm-notproj-Cr(G)} we get the following.
	\begin{thm}
	Let $G$ be a non-discrete locally compact group.
	Then $L^p(VN(G))$ $(2\le p <\infty)$ is not operator projective in $A(G)$-{\bf mod}.
	\end{thm}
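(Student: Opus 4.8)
The plan is to transcribe the proof of Theorem~\ref{thm-notproj-Cr(G)}, feeding in the $L^p$-versions of the three ingredients already in place: the transference Lemma~\ref{lem-opensubgp-Lp}, the OAP-based mapping criterion of Proposition~\ref{prop-proj}, and the vanishing Proposition~\ref{prop-trivial-mapping-space-Lp}. First I would reduce to a well-behaved open subgroup. By the structure theory \cite[Proposition 12.2.2]{Pa01} the group $G$ has an almost connected open subgroup $H$, and since $G$ is non-discrete so is $H$. The reason for insisting on \emph{almost connected} is that then $VN(H)$ is injective \cite{Pat}, so that $A(H)$ enjoys the OAP, which is exactly the hypothesis required to run Proposition~\ref{prop-proj}. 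Arguing by contradiction, assume $L^p(VN(G))$ is operator projective in $A(G)$-{\bf mod}; Lemma~\ref{lem-opensubgp-Lp} then pushes projectivity down to the open subgroup, yielding that $L^p(VN(H))$ is operator projective in $A(H)$-{\bf mod}.

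Next I would produce a non-trivial module map into $A(H)$ and collide it with Proposition~\ref{prop-trivial-mapping-space-Lp}. Fix any non-zero $x = i^p(L_g)$ with $0\neq g\in C_c(H)$. Since $A(H)$ has the OAP and $L^p(VN(H))$ is operator projective, Proposition~\ref{prop-proj} furnishes $T\in {}_{A(H)}CB(L^p(VN(H)), A(H)_+)$ with $T(x)\neq 0$. To replace the target $A(H)_+$ by $A(H)$ I would pre-compose with a multiplier: because $A(H)$ is commutative, for any $f\in A(H)$ the map $T':y\mapsto f\cdot T(y)$ is again a completely bounded $A(H)$-module map, and it lands in $A(H)$ rather than $A(H)_+$ since $f\cdot e = f\in A(H)$, where $e$ denotes the adjoined identity. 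Writing $T(x) = a + c\,e$ with $a\in A(H)$ and $c\in\mathbb{C}$, one has $f\cdot T(x) = fa + cf$, which as a function is $f\cdot(a+c\mathbf{1})$; so choosing $f$ non-zero at a point where $a+c\mathbf{1}$ does not vanish gives $T'(x)\neq 0$. This contradicts Proposition~\ref{prop-trivial-mapping-space-Lp} applied to the non-discrete group $H$, which forces ${}_{A(H)}B(L^p(VN(H)), A(H)) = 0$, and the theorem follows.

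The step requiring genuine care is the passage from $A(H)_+$ to $A(H)$, and this is where the compactness of $H$ intervenes. When $H$ is non-compact the argument above is automatic, because $A(H)\subseteq C_0(H)$ contains no non-zero constant function: if $a+c\mathbf{1}\equiv 0$ then $a=-c\mathbf{1}$ is a constant in $A(H)$, forcing $a=0$ and $c=0$, contrary to $T(x)\neq 0$. The delicate case is compact $H$, where the constant function lies in $A(H)$ and the multiplier trick can annihilate $T(x)$. There, however, $A(H)$ is unital and acts identically on $L^p(VN(H))$ (indeed $\pi'_p(\mathbf{1}\otimes i^p(L_g)) = i^p(L_g)$), so $L^p(VN(H))$ is an essential, in fact unital, module; the essential form of Proposition~\ref{prop-woods} then supplies a right inverse $\rho:L^p(VN(H))\to A(H)\prt L^p(VN(H))$, and running the OAP argument of Proposition~\ref{prop-proj} with $A(H)$ in place of $A(H)_+$ delivers the desired non-zero map directly into $A(H)$, yielding the same contradiction. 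All remaining analytic content, namely the compatibility of the interpolated actions and the complete contractivity of $\pi'_p$, is already provided by Theorem~\ref{thm-Lp-mod2}, so no new estimates are needed beyond this bookkeeping.
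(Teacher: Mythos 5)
Your proposal is correct and follows exactly the route the paper intends: the paper's proof is literally ``by a similar argument as in Theorem \ref{thm-notproj-Cr(G)}'', i.e.\ reduce to an almost connected open subgroup $H$ via Lemma \ref{lem-opensubgp-Lp}, use injectivity of $VN(H)$ to get OAP for $A(H)$, apply Proposition \ref{prop-proj}, and contradict Proposition \ref{prop-trivial-mapping-space-Lp}. Your careful treatment of the passage from $A(H)_+$ to $A(H)$ — noting that the multiplier trick can genuinely fail when $H$ is compact (where $\mathbf{1}\in A(H)$) and repairing it via the essential form of Proposition \ref{prop-woods} — actually fills in a step the paper leaves as a one-line gloss.
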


Again, for the case of discrete groups containing $\mathbb{F}_2$, we have negative results.
Recall that $RC^p_n$ is the $n$-dimensional Hilbertian operator space (\cite{P03}) with standard basis $(e_i)^n_{i=1}$ defined by
	$$RC^p_n = [R_n\cap C_n, R_n+C_n]_{\frac{1}{p}}\; \text{for}\; 1\le p \le \infty.$$
Note that $(RC^p_n)^* = RC^{p'}_n$ for $\frac{1}{p} + \frac{1}{p'} = 1$.

	\begin{thm}\label{thm-free-Lp}
	Let $G$ be a discrete group containing $\mathbb{F}_2$.
	Then, $L^p(VN(G))$ $(1<p<\infty)$ is not operator projective in $A(G)$-{\bf mod}.
	\end{thm}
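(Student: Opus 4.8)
The plan is to follow the proof of Theorem~\ref{thm-free-Cr(G)} almost verbatim, replacing the space $R_n\cap C_n$ by an interpolated space and carrying one extra norm factor. By Lemma~\ref{lem-opensubgp-Lp} and the fact that $\mathbb{F}_2$ contains $\mathbb{F}_\infty$, it is enough to treat $G=\mathbb{F}_\infty$. Since $L^p(VN(G))$ is essential (Lemma~\ref{lem-essential}), if it were operator projective then Proposition~\ref{prop-woods} would give a completely bounded left $A(G)$-module map $\rho:L^p(VN(G))\to A(G)\prt L^p(VN(G))$ splitting the multiplication map. Writing $\lambda(g_i)$ for the generator $i^p(L_{\delta_{g_i}})$, the computation leading to \eqref{right-inverse} carries over and shows
\[
\rho(\lambda(g_i))=\delta\otimes\Big(\lambda(g_i)+x_i\Big),\qquad x_i\in X,\ \ L^p(VN(G))=X\oplus\mathbb{C}\lambda(g_i),
\]
where $\delta=\delta_{g_i}$ for $2\le p<\infty$ (using $\pi'_p$), and $\delta=\delta_{g_i^{-1}}$ for $1<p\le 2$ (using $\pi_p$, whose action involves the map $g\mapsto\check g$). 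Since $\{g_i^{-1}\}$ is again a free generating set, this difference is immaterial below.

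Next I would pin down the finite-dimensional operator spaces. Using $L^p(VN(G))=[A(G),C^*_r(G)]_{\frac1p}$ (proof of Lemma~\ref{lem-essential}) together with the uniformly completely bounded, interpolation-compatible projections $P^1_n$ of \eqref{P1} and $P^\infty_n$, complex interpolation produces a projection $P^p_n$ onto $E^p_n:=\text{span}\{i^p(L_{\delta_{g_i}})\}_{i=1}^n$ with $\norm{P^p_n}_{cb}$ bounded independently of $n$; and the complete $2$-isomorphisms $E^1_n\cong R_n+C_n=RC^1_n$ from \eqref{E1} and $E^\infty_n\cong R_n\cap C_n=RC^\infty_n$ interpolate to a complete isomorphism, with constant independent of $n$,
\[
E^p_n\ \cong\ [\,R_n+C_n,\,R_n\cap C_n\,]_{\frac1p}\ =\ RC^{p'}_n,\qquad \tfrac1p+\tfrac1{p'}=1.
\]
Composing $\rho$ with $P^1_n\otimes P^p_n$ and these identifications (identifying the $A(G)$-leg with $RC^1_n$ via \eqref{E1}) yields a completely bounded map $\Phi$ with $\norm{\Phi}$ bounded independently of $n$,
\[
\Phi:RC^{p'}_n\to RC^1_n\prt RC^{p'}_n,\qquad
\Phi(e_i)=e_i\otimes\Big(e_i+\sum_{1\le j\neq i\le n}\alpha_{i,j}e_j\Big).
\]

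The contradiction comes from one duality estimate. Since $(RC^1_n\prt RC^{p'}_n)^*=CB(RC^1_n,(RC^{p'}_n)^*)=CB(RC^1_n,RC^p_n)$, I would pair $\Phi(\sum_i e_i)$ with the formal identity $\iota_n\in CB(RC^1_n,RC^p_n)$, $e_i\mapsto e_i$. Because the duality between $RC^p_n$ and $RC^{p'}_n$ is diagonal on the standard basis, the off-diagonal $\alpha_{i,j}$ cancel and the pairing equals $n$, whence $\norm{\Phi(\sum_i e_i)}\ge n/\norm{\iota_n}_{cb}$. The key point is the norm estimate: from $\norm{\iota:R_n+C_n\to R_n\cap C_n}_{cb}=\sqrt n$ and $RC^p_n=[R_n\cap C_n,R_n+C_n]_{\frac1p}$, interpolation gives
\[
\norm{\iota_n}_{cb}=\norm{\iota:R_n+C_n\to RC^p_n}_{cb}\ \le\ (\sqrt n)^{\,1-\frac1p}=n^{\frac1{2p'}}.
\]
On the other hand $RC^{p'}_n$ is Hilbertian, so $\norm{\sum_i e_i}_{RC^{p'}_n}=\sqrt n$ and $\norm{\Phi(\sum_i e_i)}\le\norm{\Phi}\sqrt n$. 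Combining the two bounds,
\[
n\ \le\ \norm{\Phi}\,\sqrt n\cdot n^{\frac1{2p'}},\qquad\text{that is}\qquad n^{\frac1{2p}}\le\norm{\Phi},
\]
which is impossible for large $n$ since $\norm{\Phi}$ does not depend on $n$. The same scheme runs for every $p\in(1,\infty)$, the only $p$-dependence being the choice of module action in the first paragraph.

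I expect the main obstacle to be the identification $E^p_n\cong RC^{p'}_n$ with constants independent of $n$ --- that is, verifying that complex interpolation genuinely commutes with restriction to the free-generator subspace, which is exactly what the uniformly bounded compatible projections $P^1_n,P^\infty_n$ secure --- together with the estimate $\norm{\iota:R_n+C_n\to RC^p_n}_{cb}\le n^{1/(2p')}$. It is the resulting positive exponent $\tfrac1{2p}$ that forces the contradiction across the whole range $1<p<\infty$.
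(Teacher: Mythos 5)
Your argument is the paper's proof of Theorem \ref{thm-free-Lp} in all essentials: reduce to $\mathbb{F}_\infty$ via Lemma \ref{lem-opensubgp-Lp}, put $\rho$ in diagonal form on the generators, compress by $P^1_n\otimes P^p_n$ to the span of $n$ free generators, and play the duality pairing with the formal identity (whose cb-norm is controlled by interpolation) against the Hilbertian upper bound $\norm{\Phi}\sqrt{n}$. The one discrepancy is cosmetic: with the paper's convention $RC^p_n=[R_n\cap C_n,R_n+C_n]_{1/p}$ the generator span in $L^p(VN(\mathbb{F}_\infty))$ is $RC^p_n$ rather than your $RC^{p'}_n$ (your swap comes from taking the paper's misordered couple $[A(G),C^*_r(G)]_{1/p}$ literally), but this merely exchanges the exponents $\tfrac{1}{2p}$ and $\tfrac{1}{2p'}$, both positive, so the contradiction goes through either way.
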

\begin{proof}
The proof is essentially the same as Theorem \ref{thm-free-Cr(G)}.
Again, it is enough to show the theorem for the case $G = \mathbb{F}_\infty$
because of Lemma \ref{lem-opensubgp-Lp} and the fact that $\mathbb{F}_2$ contains $\mathbb{F}_\infty$.
Note that since $VN(G)$ is a finite von Neumann algebra we have $VN(G) \hookrightarrow L^p(VN(G))$ for any $1\le p<\infty$,
and since $C^*_r(G)$ is dense in $L^p(VN(G))$ (see the proof of Lemma \ref{lem-essential}),
we can say that $\lambda(s)$, $s\in G$ is a typical element in $L^p(VN(G))$.

Suppose that $L^p(VN(G))$ is operator projective in $A(G)$-{\bf mod}. Then we have a completely bounded left $A(G)$-module map
	$\rho : L^p(VN(G)) \rightarrow A(G) \prt L^p(VN(G)),$
which is a right inverse of the multiplication map $\pi$. We can similarly show that $\rho$ is of form
$\rho(\lambda(s)) = \delta_s \otimes (\lambda(s) + x_s)$
for any $s\in G$ and for some $x_s \in X$ with $L^p(VN(G)) = X \oplus \mathbb{C}\lambda(s)$.

Now we set
	$$E^p_n = \text{span}\{\lambda(g_i)\}^n_{i=1} \subseteq L^p(VN(G))$$
and
	$$P^p_n : L^p(VN(G)) \rightarrow E^p_n,\;\lambda(g_i) \mapsto
	\left\{ \begin{array}{ll} \lambda(g_i) & 1\le i\le n\\ 0 & \text{otherwise} \end{array}\right.,$$
Note that we have complete 2-isomorphisms
	$$\phi_p : E^p_n \rightarrow RC^p_n,\; \lambda(g_i) \mapsto e_i,$$
and $P^p_n$ is completely bounded with cb-norm $\le 2$ (\cite[section 9.7]{P03}).
Recall $\psi$ and $P^1_n$ as in \eqref{E1} and \eqref{P1}, respectively.
By composing the above maps with $\rho$ we get the following completely bounded map with cb-norm independent of $n$: 
	$$\Phi : RC^p_n \stackrel{\phi^{-1}_p}{\longrightarrow} L^p(VN(G)) \stackrel{\rho}{\longrightarrow} A(G) \prt L^p(VN(G))
	\stackrel{(\psi \otimes \phi)\circ (P^1_n\otimes P^p_n)}{\longrightarrow} (RC^1_n) \prt (RC^p_n),$$
where $\Phi(e_i) = e_i \otimes (e_i + \sum_{1\le j\neq i\le n}\alpha_{i,j}e_j)$ for some constants $(\alpha_{i,j})_{1\le j\neq i\le n}$.
Note that
	$$RC^1_n \prt RC^p_n \cong CB(RC^p_n, RC^\infty_n)^*$$
completely isometrically, so that we have
	$$\norm{\Phi \Big(\sum^n_{i=1}e_i\Big)}_{CB(RC^p_n, RC^\infty_n)^*}
	\ge \frac{\abs{\left\langle \Phi\Big(\sum^n_{i=1}e_i\Big), I_n \right\rangle}}{\norm{I_n : RC^p_n \rightarrow RC^\infty_n}_{cb}}
	\le n^{1-\frac{1}{2p}},$$
where $I_n : RC^p_n \rightarrow RC^\infty_n$ is the formal identity.
Note that
	$$\norm{I_n : RC^p_n \rightarrow RC^\infty_n}_{cb} \le n^{\frac{1}{2p}}$$
can be obtained by complex interpolation and the facts that
	$$RC^p_n = [RC^\infty_n, RC^2_n]_{\frac{2}{p}}$$
and (\cite[section 10]{P03})
	$$\norm{I : RC^2_n \rightarrow RC^\infty_n}_{cb} \le n^{\frac{1}{4}}.$$
On the other hand we have
	$$\norm{\Phi \Big(\sum^n_{i=1}e_i\Big)}_{CB(RC^p_n, RC^\infty_n)^*} \le \norm{\Phi} \norm{\sum^n_{i=1}e_i}_{RC^p_n} = \norm{\Phi}\sqrt{n},$$
which is a contradiction when $n$ is large enough.

\end{proof}

We close this section with the case $L^2(VN(G))_c$.
Note that $\pi_{2,c}$ in Proposition \ref{prop-sigma2} gives us a completely contractive left $A(G)$-module structure on $L^2(VN(G))_c$.
However, operator projectivity of $L^2(VN(G))_c$ has nothing to do with amenability
in contrast to the case of $L^2(VN(G))$ (with operator Hilbert space structure).

	\begin{thm}\label{thm-L2column}
	Let $G$ be a locally compact group. Then $L^2(VN(G))_c$ is operator projective in $A(G)$-{\bf mod}
	if and only if $G$ is discrete.
	\end{thm}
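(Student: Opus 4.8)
The plan is to prove the two implications separately; the implication $G$ discrete $\Rightarrow$ operator projective is the one that genuinely exploits the column structure, so I treat it first. Suppose $G$ is discrete. Then $L^2(VN(G))_c$ is essential by Lemma \ref{lem-essential}, so by the essential case of Proposition \ref{prop-woods} it is enough to exhibit a completely bounded left $A(G)$-module map $\rho : L^2(VN(G))_c \rightarrow A(G) \prt L^2(VN(G))_c$ that is a right inverse of $\pi_{2,c}$. Here $\{\Lambda(\lambda(s))\}_{s\in G}$ is an orthonormal basis of $L^2(VN(G))$, and by the formula in Theorem \ref{thm-Lp-mod1} the module action is diagonal, namely $f\cdot \Lambda(\lambda(s)) = f(s^{-1})\,\Lambda(\lambda(s))$. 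Since $G$ is discrete, $\delta_{s^{-1}}\in A(G)$ with $\|\delta_{s^{-1}}\|_{A(G)} = 1$, so I would define $\rho$ on the basis by $\rho(\Lambda(\lambda(s))) = \delta_{s^{-1}} \otimes \Lambda(\lambda(s))$. A one-line check gives $\pi_{2,c}\circ\rho = \mathrm{id}$ and shows that $\rho$ is an $A(G)$-module map, since both sides of the module identity collapse to $f(s^{-1})\,\delta_{s^{-1}} \otimes \Lambda(\lambda(s))$.

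The real content is that this $\rho$ is completely bounded, and I expect this to be the main obstacle. Here I would invoke the defining feature of a column Hilbert space: the complete isometry $CB(L^2(VN(G))_c, Y)\cong C(Y)$ (the column of $Y$) valid for every operator space $Y$, applied to $Y = A(G)\prt L^2(VN(G))_c$. Under it $\rho$ corresponds to the column $(\delta_{s^{-1}}\otimes \Lambda(\lambda(s)))_s$, and I would estimate its norm by the same Cauchy--Schwarz argument for Hilbert $C^*$-modules used in Proposition \ref{prop-sigma2}, the orthonormality of $\{\Lambda(\lambda(s))\}$ together with the uniform bound $\|\delta_{s^{-1}}\|_{A(G)} = 1$ making the cross terms vanish; equivalently, one can check directly that $\rho^*$ is a completely bounded left inverse of $\pi^*_{2,c}$ using the explicit description of $\pi^*_{2,c}$ recorded in Proposition \ref{prop-sigma2}. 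It is precisely at this point that the column structure is essential: the argument requires no bounded approximate identity for $A(G)$, so in contrast to Theorem \ref{thm-positive-Lp} (where Theorem \ref{thm-essential} forces amenability through the b.a.i.\ hypothesis) it goes through for \emph{every} discrete $G$, which is exactly the asserted independence from amenability.

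For the converse, suppose $G$ is not discrete; here I would mirror Theorem \ref{thm-notproj-Cr(G)}. First I would use the structure theory (\cite[Proposition 12.2.2]{Pa01}) to pick an almost connected open subgroup $H$, which is again non-discrete; then $VN(H)$ is injective (\cite{Pat}), so $A(H)$ has the OAP. A transference lemma identical to the $p=2$ case of Lemma \ref{lem-opensubgp-Lp} (the restriction and extension maps are completely contractive for the column structure as well, so one may run the diagram there) shows that if $L^2(VN(G))_c$ were operator projective in $A(G)$-\textbf{mod}, then $L^2(VN(H))_c$ would be operator projective in $A(H)$-\textbf{mod}. Applying Proposition \ref{prop-proj}, for any nonzero $x$ one would obtain $T\in {}_{A(H)}CB(L^2(VN(H))_c, A(H)_+)$ with $T(x)\neq 0$, and multiplying by a suitable element of $A(H)$ yields a nonzero $T'\in {}_{A(H)}CB(L^2(VN(H))_c, A(H))$. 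But the $p=2$ case of Proposition \ref{prop-trivial-mapping-space-Lp} gives ${}_{A(H)}B(L^2(VN(H))_c, A(H)) = 0$; note that proposition's proof uses only the underlying Banach-module action together with a density argument, and so is unaffected by replacing the operator Hilbert space structure by the column structure (the two underlying modules agree up to the harmless involution $g\mapsto \check g$). This contradiction shows $L^2(VN(G))_c$ is not operator projective, which completes the proof.
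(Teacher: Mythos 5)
Your treatment of the non-discrete direction is sound and follows the paper's own route: pass to an almost connected open subgroup $H$ (so that $A(H)$ has the OAP), transfer projectivity via the restriction/extension maps, apply Proposition \ref{prop-proj}, and contradict the vanishing of ${}_{A(H)}B(L^2(VN(H))_c, A(H))$, which, as you correctly observe, depends only on the underlying Banach-module action and not on the operator space structure. Your candidate right inverse in the discrete case, $\rho(\Lambda(\lambda(s))) = \delta_{s^{-1}}\otimes \Lambda(\lambda(s))$, is also the correct map --- it is exactly the map the paper constructs --- and your verification that it is an $A(G)$-module map and a right inverse of $\pi_{2,c}$ is fine.

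The gap is precisely where you anticipated it: the complete boundedness of $\rho$, which is the whole content of the forward implication, is not actually proved. The ``defining feature'' you invoke, a complete isometry $CB(L^2(VN(G))_c, Y)\cong C(Y)$ for \emph{every} operator space $Y$, is not a correct statement (already for $Y=\mathbb{C}$ one has $CB(H_c,\mathbb{C})=\overline{H}_r$, a row space; the correct general identification involves $\overline{H}_r\otimes_{\min}Y$, and estimating that norm when $Y=A(G)\prt L^2(VN(G))_c$ is exactly the hard point). Likewise, the Hilbert $C^*$-module Cauchy--Schwarz argument of Proposition \ref{prop-sigma2} is used there for maps with values in the von Neumann algebra $VN(G)$; it does not transfer to the target $A(G)\prt L^2(VN(G))_c$, which carries no $C^*$-module structure in which the vectors $\delta_{s^{-1}}\otimes e_s$ could be declared orthogonal, and the reformulation via $\rho^*$ does not help since $\norm{\rho^*}_{cb}=\norm{\rho}_{cb}$ and the explicit form of $\pi_{2,c}^*$ says nothing about $\rho^*$. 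What the paper actually does is factor $\rho=(\phi\otimes I)\circ\Gamma_2$, where $\Gamma_2:\Lambda x\mapsto \Lambda\otimes\Lambda(\Gamma x)$ is an isometry of $L^2(VN(G))$ into $L^2(VN(G))\otimes_2 L^2(VN(G))$ (by left invariance of the Plancherel weight and discreteness), hence a complete isometry for the column structures because $H_c\prt K_c=(H\otimes_2 K)_c$, and where $\phi=\Phi(\,\cdot\,,\Lambda 1):L^2(VN(G))_c\to L^1(VN(G))=A(G)$ is a complete contraction coming from the completely contractive multiplication $L^2(VN(G))_c\prt L^2(VN(G))_r\to L^1(VN(G))$. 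These two ingredients are what make the matrix-norm estimate work, and neither appears in your sketch; without them, or a genuine substitute, the implication ``$G$ discrete $\Rightarrow$ operator projective'' is not established.
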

\begin{proof}
When $G$ is non-discrete we can use the same argument as in Theorem \ref{thm-notproj-Cr(G)} to show that $L^2(VN(G))_c$ is not operator projective.
Conversely, suppose that $G$ is discrete. Let
	$$\Gamma_2 : L^2(VN(G)) \rightarrow L^2(VN(G)) \otimes_2 L^2(VN(G)),\; \Lambda x \mapsto \Lambda \otimes \Lambda (\Gamma x)$$
for any $x \in n_\varphi.$ Then, from the left invariance of $\varphi$ and the fact that $G$ is discrete,
we can conclude that $\Gamma_2$ is an isometry, so that
	$$\Gamma_2 : L^2(VN(G))_c \rightarrow L^2(VN(G))_c \prt L^2(VN(G))_c$$ is a complete isometry.
Note that the multiplication map
	$$\Phi : L^2(VN(G))_c \prt L^2(VN(G))_r \rightarrow L^1(VN(G)) = L^\infty(\Gb)_*, \; a\otimes b \mapsto ab$$
is a complete contraction. Then, we get a complete contraction
	$$\phi = \Phi(\,\cdot\, , \Lambda 1_{VN(G)}) : L^2(VN(G))_c \rightarrow L^1(VN(G)), \; \Lambda x \mapsto \varphi^{(-\frac{1}{2})}_x.$$
Then, the composition $\rho = (\phi \otimes I)\circ \Gamma_2 : L^2(VN(G))_c \rightarrow L^1(VN(G)) \prt L^2(VN(G))_c$
is completely contractive and it is a $A(G)$-module map. Moreover it is straightforward to check that $\rho$ is a right inverse of $\pi_{2,c}$.
\end{proof}

\section{The modules $L^p(G)$ $(1\le p \le \infty)$, $C^*(G)$, and $C^*_{\delta}(G)$}

\subsection{Operator Projectivity of $L^p(G)$ $(1\le p \le \infty)$}

In this subsection we will again assume that the reader is familiar with standard materials about
complex interpolation of Banach spaces (\cite{BL76}) and operator spaces (\cite{P96}).
For $L^p(G)$ we consider the natural operator space structure given by complex interpolation
and the fact $L^p(G) = [L^\infty(G), L^1(G)]_{\frac{1}{p}}$.
The $A(G)$-module structure on $L^p(G)$ $(1\le p \le \infty)$ can be obtained by the following complete contraction.
	\begin{equation}\label{module-Lp}
	\pi : A(G) \prt L^p(G) \rightarrow L^p(G),\; f\otimes g \mapsto f g.
	\end{equation}
Indeed, since $L^\infty(G)$ is a completely contractive Banach algebra under the pointwise multiplication, we have a complete contraction
	$$\pi_\infty : L^\infty(G) \prt L^\infty(G) \rightarrow L^\infty(G),\; f\otimes g \mapsto f g.$$
On the other hand $L^1(G)$ is the predual of $L^\infty(G)$,
so that it is a $L^\infty(G)$-module with the following completely contractive multiplication map
	$$\pi_1 : L^\infty(G) \prt L^1(G) \rightarrow L^1(G),\; f\otimes g \mapsto f g.$$
Then, we get a complete contraction
	$$\pi_p : L^\infty(G) \prt L^p(G) \rightarrow L^p(G),\; f\otimes g \mapsto f g$$
by bilinear complex interpolation (\cite[section 2.7]{P03}).
Since the formal identity $i : A(G) \rightarrow L^\infty(G)$ is a complete contraction,
we get \eqref{module-Lp} by the composition $\pi_p \circ (i \otimes id_{L^p(G)})$.
Clearly, $L^p(G)$ is essential for $1\le p<\infty$.
Then, by similar arguments as in the proof of Theorem \ref{thm-notproj-Cr(G)}, we get the following result.
Note that the transference argument can be obtained by a similar diagram to \eqref{diagram1}.

	\begin{thm}\label{thm-comLp}
	Let $G$ be a locally compact group. If $G$ is discrete and amenable,
	then $L^p(G)$ $(1\le p<\infty)$ is operator projective in $A(G)$-{\bf mod}.
	If $G$ is non-discrete, then $L^p(G)$ $(1\le p < \infty)$ is not operator projective in $A(G)$-{\bf mod}.
	\end{thm}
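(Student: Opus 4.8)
The plan is to treat the two implications separately, reusing the machinery already assembled for $C^*_r(G)$. For the discrete amenable case I would simply verify the hypotheses of Theorem \ref{thm-essential}: $A(G)$ is an operator biprojective completely contractive Banach algebra when $G$ is discrete \cite{W02}, and it has a bounded approximate identity when $G$ is amenable \cite{Lep}, while $L^p(G)$ $(1\le p<\infty)$ is essential. Essentiality is immediate for discrete $G$, since the point masses $\delta_x$ lie in $A(G)$, so $A(G)\cdot \ell^p(G)$ contains all finitely supported functions, which are dense in $\ell^p(G)$. Theorem \ref{thm-essential} then yields operator projectivity of $L^p(G)$ in $A(G)$-\m\ directly.

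For the non-discrete case the crux is the commutative analogue of Proposition \ref{prop-trivial-mapping-space}, namely that ${}_{A(G)}B(L^p(G),A(G))=0$ for $1\le p<\infty$. To prove this I would fix such a $T$ and an open set $K$ with compact closure and positive measure, and set $g=T(1_K)\in A(G)$. For $f\in A(G)$ with $\text{supp}(f)\subseteq K$ one has $f\cdot 1_K = f$ in $L^p(G)$, so the module property gives $T(f)=f\cdot T(1_K)=fg$. Post-composing $T$ with the contraction $A(G)\hookrightarrow L^\infty(G)$ then shows $\|fg\|_\infty\le \|T\|\,\|f\|_p$ for all such $f$. If $g$ were nonzero at some $x_0\in K$, continuity would give a neighbourhood on which $|g|\ge c>0$, and since $G$ is non-discrete I can place bump functions $f_n\in A(G)\cap C_c(G)$ there with $\|f_n\|_p=1$ and $\|f_n\|_\infty\to\infty$, forcing $\|f_n g\|_\infty\ge c\|f_n\|_\infty\to\infty$, a contradiction. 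Hence $g=0$ on $K$, so $T(f)=fg=0$ for every $f\in A(G)\cap C_c(G)$ (taking $K=\{f\neq 0\}$); since $A(G)\cap C_c(G)$ is $\|\cdot\|_p$-dense in $L^p(G)$, we conclude $T=0$.

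The remaining steps follow the template of Theorem \ref{thm-notproj-Cr(G)}. Using the structure theory \cite[Proposition 12.2.2]{Pa01} I would pass to an almost connected open subgroup $H$, which is still non-discrete; as $VN(H)$ is injective \cite{Pat}, $A(H)$ has OAP. A transference lemma for $L^p$—proved exactly as Lemma \ref{lem-opensubgp} and its interpolated version Lemma \ref{lem-opensubgp-Lp}, by interpolating the completely contractive restrictions $A(G)\to A(H)$ and $L^\infty(G)\to L^\infty(H)$ and their extensions by zero and chasing a diagram like \eqref{diagram1}—shows that operator projectivity of $L^p(G)$ in $A(G)$-\m\ would force operator projectivity of $L^p(H)$ in $A(H)$-\m, producing a right inverse $\rho_H:L^p(H)\to A(H)\prt L^p(H)$ of $\pi_H$. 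Proposition \ref{prop-proj}, applied with the OAP of $A(H)$, then yields for any nonzero $x\in L^p(H)$ a map in ${}_{A(H)}CB(L^p(H),A(H)_+)$ not annihilating $x$; multiplying by a suitable $u\in A(H)$ lands it in ${}_{A(H)}CB(L^p(H),A(H))$ while still not annihilating $x$, contradicting the vanishing mapping space above.

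I expect the main obstacle to be the vanishing mapping space, and within it the two analytic inputs: the construction of $A(G)\cap C_c(G)$ bumps realizing $\|f_n\|_\infty/\|f_n\|_p\to\infty$, which is precisely where non-discreteness enters and where the restriction $p<\infty$ is used, and the $L^p$-density of $A(G)\cap C_c(G)$, which I would obtain by cutting off sup-norm approximants from $A(G)$ by compactly supported members of the algebra. The operator-space bookkeeping in the transference step should be routine once the $L^p$ restriction and extension are recognized as interpolants of complete contractions at the $L^\infty$ and $L^1$ endpoints.
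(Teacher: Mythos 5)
Your proposal is correct and follows essentially the same route as the paper: the positive direction is exactly an application of Theorem \ref{thm-essential} (operator biprojectivity of $A(G)$ for discrete $G$, b.a.i.\ for amenable $G$, essentiality of $L^p(G)$), and the negative direction runs the template of Theorem \ref{thm-notproj-Cr(G)} — a vanishing mapping-space result ${}_{A(G)}B(L^p(G),A(G))=0$ proved with $g=T(1_K)$ and shrinking bumps, transference to an almost connected open subgroup via a diagram like \eqref{diagram1} with interpolated restriction/extension maps, and Proposition \ref{prop-proj} using the OAP of $A(H)$. Your write-up of the bump-function step (working on a neighbourhood where $|g|\ge c$) is if anything a cleaner version of the paper's own argument in Proposition \ref{prop-trivial-mapping-space}.
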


When $p=1$ we have more positive results.
	\begin{thm}\label{thm-comL1}
	Let $G$ be a locally compact group. If $G$ is discrete,
	then $\ell^1(G)$ is operator projective in $A(G)$-{\bf mod}.
	\end{thm}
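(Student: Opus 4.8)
The plan is to invoke the essential case of Proposition \ref{prop-woods}. Since $G$ is discrete, $\ell^1(G)$ is an essential $A(G)$-module, so it suffices to produce a completely bounded left $A(G)$-module map $\rho : \ell^1(G) \to A(G)\prt \ell^1(G)$ that is a right inverse of the module action $\pi$ of \eqref{module-Lp}. Essentiality is immediate: for discrete $G$ each point mass $\delta_s$ lies in $A(G)$ (it is the coefficient function $\la \lambda(\cdot)\delta_e,\delta_s\ra$, of norm one), and since the action is pointwise multiplication we have $\delta_s\cdot\delta_s=\delta_s$, so $A(G)\cdot\ell^1(G)$ contains every $\delta_s$ and is dense.

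First I would define $\rho$ on the dense span of the point masses by $\rho(\delta_s)=\delta_s\otimes\delta_s$. On this span $\rho$ is a module map, since for $t\in G$ one has $\delta_t\cdot\rho(\delta_s) = (\delta_t\cdot\delta_s)\otimes\delta_s = \rho(\delta_t\cdot\delta_s)$ (both sides vanishing unless $t=s$), and it is a right inverse of $\pi$ because $\pi(\rho(\delta_s)) = \delta_s\cdot\delta_s = \delta_s$. Using $\|\delta_s\|_{A(G)}=\|\delta_s\|_{\ell^1(G)}=1$ together with the fact that an elementary tensor has $\prt$-norm equal to the product of the norms of its factors, the triangle inequality yields $\|\rho(g)\|_{A(G)\prt\ell^1(G)}\le\sum_{s}|g(s)|=\|g\|_1$ for $g=\sum_s g(s)\delta_s$. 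Hence $\rho$ is a contraction on the dense span; being bounded and agreeing with a module map there, its continuous extension is a contractive left $A(G)$-module map still satisfying $\pi\circ\rho=\mathrm{id}$.

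The key step, and the place where both discreteness and the exponent $p=1$ are essential, is to upgrade boundedness of $\rho$ to complete boundedness. For discrete $G$ the von Neumann algebra $\ell^\infty(G)$ carries the minimal operator space structure, so its predual $\ell^1(G)$ --- which is precisely the operator space assigned to $L^1(G)$ in the discussion preceding \eqref{module-Lp} --- carries the maximal operator space structure $\max(\ell^1(G))$. By the universal property of the maximal structure, $CB(\max(\ell^1(G)),F)=B(\ell^1(G),F)$ isometrically for every operator space $F$; in particular the contraction $\rho$ is automatically completely contractive. Proposition \ref{prop-woods} then shows that $\ell^1(G)$ is operator projective in $A(G)$-\m.

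I expect the only real obstacle to be the identification of the operator space structure on $\ell^1(G)$ with $\max(\ell^1(G))$ and the clean invocation of $CB(\max(E),F)=B(E,F)$; everything else is a routine density-and-triangle-inequality argument. This also explains why no amenability hypothesis is needed here, in contrast to Theorem \ref{thm-comLp}: we bypass Theorem \ref{thm-essential} (which requires a bounded approximate identity) by exhibiting $\rho$ explicitly, and the max-structure shortcut to complete boundedness is available only when $p=1$.
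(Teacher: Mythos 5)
Your proposal is correct and follows essentially the same route as the paper: the same map $\rho(\delta_s)=\delta_s\otimes\delta_s$, the same triangle-inequality estimate using $\|\delta_s\|_{A(G)}=1$ to get contractivity on the span of point masses, the same upgrade to complete contractivity via the maximal operator space structure on $\ell^1(G)$, and the same conclusion through Proposition \ref{prop-woods}. The only difference is that you spell out the module-map and right-inverse verifications and the $\min/\max$ duality that the paper leaves implicit.
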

\begin{proof}
Let
	$$\rho : \ell^1(G) \rightarrow A(G) \prt \ell^1(G),\; \delta_s \mapsto \delta_s \otimes \delta_s.$$
It is straightforward to check that $\rho$ is a left $A(G)$-module map, which is a right inverse of the multiplication map, provided that it is completely bounded.
Indeed, for any finite subset $\{g_1, \cdots, g_n\} \subseteq G$ we have
	\begin{align*}
	\norm{\rho \Big(\sum_i \alpha_i \delta_{g_i}\Big)}_{A(G) \prt \,\ell^1(G)}
	& = \norm{\sum_i \alpha_i \delta_{g_i}\otimes \delta_{g_i}}_{A(G) \prt \, \ell^1(G)}\\
	& = \sum_i \abs{\alpha_i}\cdot \norm{\delta_{g_i}}_{A(G)}\\
	& \le \sum_i \abs{\alpha_i},
	\end{align*}
which implies that $\rho$ is contractive.
Since $\ell^1(G)$ is equipped with the maximal operator space structure, $\rho$ is actually a complete contraction. 
Consequently, $\ell^1(G)$ is operator projective by Proposition \ref{prop-woods}.

\end{proof}

When $p=\infty$ we have negative results for discrete groups.
	\begin{lem}\label{lem-proj-com}
	Let $G$ be a discrete group. Suppose that $\ell^\infty(G)$ is operator projective in $A(G)$-{\bf mod}.
	Then there is a bounded projection from $\ell^\infty(G)$ onto $c_0(G)$.
	\end{lem}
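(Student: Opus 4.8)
The plan is to mimic the structure of Lemma~\ref{lem-proj} (and \cite[Lemma 3.2]{DP04}), exploiting Woods' criterion together with the fact that, for discrete $G$, the point masses $\delta_s$ are idempotents lying in $A(G)$. The first observation is that $\ell^\infty(G)$ is \emph{not} essential: since $A(G)\subseteq c_0(G)$ when $G$ is discrete and $c_0(G)$ is an ideal of $\ell^\infty(G)$ under pointwise multiplication, we have $A(G)\cdot\ell^\infty(G)\subseteq c_0(G)$, whose closure is all of $c_0(G)$ (as $\delta_s=\delta_s\cdot\delta_s\in A(G)\cdot\ell^\infty(G)$ and finitely supported functions are dense). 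Hence I would invoke the non-essential half of Proposition~\ref{prop-woods} to obtain a completely bounded left $A(G)$-module map $\rho:\ell^\infty(G)\to A(G)_+\prt\ell^\infty(G)$ with $\pi_+\circ\rho=\mathrm{id}$.

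Next I would split off the adjoined unit. Writing $A(G)_+\prt\ell^\infty(G)=\big(A(G)\prt\ell^\infty(G)\big)\oplus\ell^\infty(G)$ and $\pi_+=\pi\oplus\mathrm{id}$, decompose $\rho=\rho_0+1\otimes\theta$, where $1$ is the adjoined unit of $A(G)_+$ and $\rho_0:\ell^\infty(G)\to A(G)\prt\ell^\infty(G)$, $\theta:\ell^\infty(G)\to\ell^\infty(G)$ are the two completely bounded components. Define the candidate projection $P:=\pi\circ\rho_0:\ell^\infty(G)\to\ell^\infty(G)$, which is bounded. Since $\pi$ maps into $\overline{A(G)\cdot\ell^\infty(G)}=c_0(G)$, the range of $P$ lies in $c_0(G)$.

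It remains to show $P$ fixes $c_0(G)$, and here is where discreteness enters. For each $s\in G$ the point mass $\delta_s$ belongs to $A(G)$ and is idempotent, so $\delta_s=\delta_s\cdot\delta_s$ and hence $\rho(\delta_s)=\rho(\delta_s\cdot\delta_s)=\delta_s\cdot\rho(\delta_s)$. The key bookkeeping point is that the action of $\delta_s\in A(G)$ sends the unit component $1\otimes\theta(\delta_s)$ to $\delta_s\otimes\theta(\delta_s)\in A(G)\prt\ell^\infty(G)$, so $\delta_s\cdot\rho(\delta_s)$ has zero unit component; comparing with $\rho(\delta_s)=\rho_0(\delta_s)+1\otimes\theta(\delta_s)$ forces $\theta(\delta_s)=0$. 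Consequently $P(\delta_s)=\pi(\rho_0(\delta_s))=\pi_+(\rho(\delta_s))=\delta_s$. By linearity and continuity $P$ restricts to the identity on the dense span of $\{\delta_s\}$, hence on all of $c_0(G)$, so $P$ is a bounded projection of $\ell^\infty(G)$ onto $c_0(G)$.

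The argument is essentially bookkeeping once Woods' criterion is in place; the only genuinely delicate steps are recognizing that non-essentiality of $\ell^\infty(G)$ forces the \emph{unitized} form of the lifting (so that the scalar component $\theta$ must be tracked explicitly) and using the discrete-group idempotents $\delta_s$ to annihilate $\theta$ on $c_0(G)$. Notably no amenability hypothesis is needed, in contrast to Lemma~\ref{lem-proj}, precisely because the $\delta_s$ already exhaust a dense subset of the essential part $c_0(G)$ without recourse to an approximate identity.
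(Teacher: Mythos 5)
Your proof is correct and takes essentially the same route as the paper, which after noting that $A(G)\cdot\ell^\infty(G)\subseteq c_0(G)$ and that $A(G)$ contains a bounded approximate identity for $(c_0(G),\|\cdot\|_\infty)$ simply defers to the argument of \cite[Lemma 3.2]{DP04} --- i.e.\ precisely the Woods right-inverse and unit-component splitting you carry out. Your only deviation is cosmetic: you annihilate the scalar component $\theta$ on the dense span of the idempotents $\delta_s$ rather than via the b.a.i.\ of indicators of finite sets, which is the same mechanism.
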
  
\begin{proof}
Note that $A(G) \cdot \ell^\infty(G) \subseteq c_0(G)$ when $G$ is discrete.
Moreover, we have a b.a.i. $(e_i)_i$ in $(c_0(G), \norm{\cdot}_\infty)$ satisfying $(e_i)_i \subseteq A(G)$.
Then, the rest of the proof is the same as that of \cite[Lemma 3.2]{DP04}.
\end{proof}

By combining the above lemma and \cite[Corollary 3]{LL90} we have the following.
	\begin{thm}\label{thm-ell-infty}
	Let $G$ be a discrete group. Then $\ell^\infty(G)$ is operator projective in $A(G)$-{\bf mod} if and only if $G$ is finite.
	\end{thm}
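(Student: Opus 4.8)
The plan is to prove the two implications separately, the point being that the substantive work has already been done in Lemma~\ref{lem-proj-com} together with the cited complementation theorem, so that the theorem itself is an assembly of these ingredients.

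For the direction ``$G$ finite $\Rightarrow$ projective'' I would argue directly. When $G$ is finite, every function on $G$ is a coefficient function of $\lambda$, so $A(G)$ and $\ell^\infty(G)$ coincide with the algebra of all functions on $G$ under pointwise multiplication; in particular $\ell^\infty(G)$ is nothing but $A(G)$ regarded as a module over itself, and $A(G)$ is unital with unit $1_G$. The module is then essential (indeed $1_G\cdot x = x$), so by the essential version of Proposition~\ref{prop-woods} it suffices to exhibit a completely bounded left $A(G)$-module map $\rho : \ell^\infty(G) \to A(G)\prt \ell^\infty(G)$ right-inverting the multiplication $\pi$. I would simply set $\rho(x) = x\otimes 1_G$, where the argument $x$ is read in the $A(G)$-slot via the identification $\ell^\infty(G)=A(G)$. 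A one-line check gives that $\rho$ is a module map, since $f\cdot\rho(x) = (fx)\otimes 1_G = \rho(f\cdot x)$, and that $\pi\circ\rho = \mathrm{id}$, since $\pi(x\otimes 1_G) = x\,1_G = x$; complete boundedness is immediate as $\rho = \mathrm{id}_{A(G)}\otimes(1\mapsto 1_G)$. (Equivalently, one may observe that a finite group is discrete, hence an $[IN]$-group, so that $A(G)=\ell^\infty(G)$ is operator projective by the $[IN]$-group result of Section~4.1.)

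For the converse, ``projective $\Rightarrow$ $G$ finite,'' suppose $\ell^\infty(G)$ is operator projective in $A(G)$-\m. Since $G$ is discrete, Lemma~\ref{lem-proj-com} applies verbatim and yields a bounded projection from $\ell^\infty(G)$ onto $c_0(G)$. I would then invoke \cite[Corollary 3]{LL90}, which records that such a bounded projection exists precisely when $G$ is finite; hence $G$ is finite, completing the proof.

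The only genuine obstacle lies in the converse, and it has effectively been discharged in advance. The module-theoretic step (Lemma~\ref{lem-proj-com}) rests on the same averaging argument with a bounded approximate identity in $(c_0(G),\norm{\cdot}_\infty)\cap A(G)$ as \cite[Lemma 3.2]{DP04}, while the final step depends on the Banach-space fact that $c_0(G)$ is uncomplemented in $\ell^\infty(G)$ for infinite $G$ --- the Phillips-type phenomenon encapsulated by \cite[Corollary 3]{LL90}. Consequently no new estimate is required here, and the proof reduces to combining these two inputs with the elementary unital splitting above.
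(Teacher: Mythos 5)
Your proposal is correct and follows essentially the same route as the paper: the converse direction is exactly the paper's argument (Lemma~\ref{lem-proj-com} to get a bounded projection $\ell^\infty(G)\to c_0(G)$, then \cite[Corollary 3]{LL90} to force $G$ finite), while the forward direction, which the paper leaves implicit, is correctly discharged by your explicit splitting $\rho(x)=x\otimes 1_G$ (note your choice of slot is the right one --- $1_G\otimes x$ would not be a left module map). No gaps.
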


Again, for the case of discrete groups containing $\mathbb{F}_2$, we have negative results.
	\begin{thm}\label{thm-free-comLp}
	Let $G$ be a discrete group containing $\mathbb{F}_2$.
	Then $\ell^p(G)$ $(1<p < \infty)$ is not operator projective in $A(G)$-{\bf mod}.
	\end{thm}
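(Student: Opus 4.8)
The plan is to follow the template of Theorem~\ref{thm-free-Lp} almost verbatim, the one genuinely new ingredient being a norm estimate for a formal identity out of a \emph{commutative} $\ell^p_n$. As in that proof, the transference argument of Theorem~\ref{thm-comLp} (a diagram analogous to \eqref{diagram1}, using that for discrete groups extension by zero $\ell^p(H)\hookrightarrow \ell^p(G)$ is a complete isometry with completely contractive left inverse given by restriction), together with the fact that $\mathbb{F}_2$ contains a copy of $\mathbb{F}_\infty$, reduces the problem to $G=\mathbb{F}_\infty$. First I would record that $\ell^p(G)$ is essential (the point masses span a dense subspace and $\delta_s=\delta_s\cdot\delta_s\in A(G)\cdot\ell^p(G)$), so by Proposition~\ref{prop-woods} operator projectivity yields a completely bounded left $A(G)$-module map $\rho:\ell^p(G)\to A(G)\prt \ell^p(G)$ splitting the multiplication $\pi$. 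Exactly as in \eqref{right-inverse}, writing $\rho(\delta_s)=\rho(\delta_s\cdot\delta_s)=\delta_s\cdot\rho(\delta_s)$ and using that $\delta_s\otimes \ell^p(G)$ is norm closed, one gets $\rho(\delta_s)=\delta_s\otimes(\delta_s+x_s)$ with $x_s(s)=0$.

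The decisive structural observation is that $E^p_n=\text{span}\{\delta_{g_i}\}_{i=1}^n\subseteq \ell^p(G)$ is, as an operator space, completely isometric to the \emph{commutative} $\ell^p_n=[\ell^\infty_n,\ell^1_n]_{\frac1p}$ (disjoint atoms are completely $1$-complemented at both endpoints via a conditional expectation, so the claim follows by interpolation); freeness of the generators plays no role here. Freeness enters only through the $A(G)$-factor, where as in Theorem~\ref{thm-free-Cr(G)} the map $\psi:E^1_n=\text{span}\{\delta_{g_i}\}_{i=1}^n\subseteq A(G)\to R_n+C_n$ is a complete $2$-isomorphism and $P^1_n$ is completely bounded with $\norm{P^1_n}_{cb}\le 2$, while the coordinate projection $P^p_n:\ell^p(G)\to E^p_n$ is completely contractive. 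Composing $\phi_p^{-1}$, $\rho$, $P^1_n\otimes P^p_n$ and $\psi\otimes\phi_p$ produces a completely bounded map
$$\Phi:\ell^p_n\longrightarrow (R_n+C_n)\prt \ell^p_n,\qquad \Phi(e_i)=e_i\otimes\Big(e_i+\sum_{1\le j\neq i\le n}\alpha_{i,j}e_j\Big),$$
with $\norm{\Phi}_{cb}$ bounded independently of $n$. Using the complete isometry $(R_n+C_n)\prt \ell^p_n\cong CB(\ell^p_n,R_n\cap C_n)^*$ and pairing $\Phi(\sum_i e_i)$ against the formal identity $I_n:\ell^p_n\to R_n\cap C_n$, biorthogonality of the standard bases gives $\la \Phi(\sum_i e_i),I_n\ra=n$, whence
$$\Big\|\Phi\Big(\sum_{i=1}^n e_i\Big)\Big\|_{CB(\ell^p_n,R_n\cap C_n)^*}\ge \frac{n}{\norm{I_n:\ell^p_n\to R_n\cap C_n}_{cb}}.$$

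The hard part will be the estimate $\norm{I_n:\ell^p_n\to R_n\cap C_n}_{cb}\le n^{\frac12-\frac{1}{2p}}$, which replaces $\norm{I_n:RC^p_n\to RC^\infty_n}_{cb}\le n^{\frac{1}{2p}}$ from Theorem~\ref{thm-free-Lp} and must be proved anew because the source is now commutative. I would obtain it by interpolating the two endpoints: at $p=\infty$, a direct computation using $\norm{\sum_i A_i\otimes e_i}_{M_m(\min\ell^\infty_n)}=\max_i\norm{A_i}$ together with unitary $A_i$ gives $\norm{I_n:\ell^\infty_n\to R_n\cap C_n}_{cb}=\sqrt n$, whereas at $p=1$ duality with $\norm{I_n:R_n+C_n\to \ell^\infty_n}_{cb}=\norm{I_n:\ell^2_n\to\ell^\infty_n}=1$ (the target being $\min$) gives $\norm{I_n:\ell^1_n\to R_n\cap C_n}_{cb}=1$; complex interpolation of the formal identity then yields the claimed $n^{\frac12(1-\frac1p)}$. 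Feeding this into the display above gives $\norm{\Phi(\sum_i e_i)}\ge n^{\frac12+\frac{1}{2p}}$, while the commutative norm $\norm{\sum_i e_i}_{\ell^p_n}=n^{\frac1p}$ forces $\norm{\Phi(\sum_i e_i)}\le \norm{\Phi}_{cb}\,n^{\frac1p}$. Combining, $\norm{\Phi}_{cb}\ge n^{\frac12-\frac{1}{2p}}\to\infty$ for every $p>1$, contradicting the $n$-independent bound on $\norm{\Phi}_{cb}$ and completing the proof.
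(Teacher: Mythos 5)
Your proposal is correct and follows essentially the same route as the paper's own proof: reduction to $G=\mathbb{F}_\infty$, the diagonal form $\rho(\delta_s)=\delta_s\otimes(\delta_s+x_s)$, the composite map $\Phi:\ell^p_n\to (R_n+C_n)\prt\ell^p_n\cong CB(\ell^p_n,R_n\cap C_n)^*$ paired against the formal identity, and the crucial estimate $\norm{I_n:\ell^p_n\to R_n\cap C_n}_{cb}\le n^{\frac12-\frac{1}{2p}}$ obtained by interpolating the endpoints $p=1$ and $p=\infty$, which is exactly the paper's Lemma~\ref{lem-cb-norm-estimate}. Your only departures are cosmetic (proving the $p=1$ endpoint by duality rather than by the maximality of $\ell^1_n$, and upgrading $\phi_p$ to a complete isometry, which if anything makes the needed complete boundedness of $\phi_p^{-1}$ more explicit than in the paper).
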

\begin{proof}
The proof is essentially the same as Theorem \ref{thm-free-Cr(G)}.
Again, it is enough to show the theorem for the case $G = \mathbb{F}_\infty$
because of Lemma \ref{lem-opensubgp-Lp} and the fact that $\mathbb{F}_2$ contains $\mathbb{F}_\infty$.

Suppose that $\ell^p(G)$ is operator projective. Then we have a completely bounded left $A(G)$-module map
	$\rho : \ell^p(G) \rightarrow A(G) \prt \ell^p(G),$
which is a right inverse of the multiplication map $\pi$. We can similarly show that $\rho$ is of form
$\rho(\delta_s) = \delta_s \otimes (\delta_s + x_s)$
for any $s\in G$ and for some $x_s \in X$ with $\ell^p(G)= X \oplus \mathbb{C}\delta_s$.

Now we set
	$$F^p_n = \text{span}\{\delta_{g_i}\}^n_{i=1} \subseteq \ell^p(G)$$
and
	$$Q^p_n : \ell^p(G) \rightarrow F^p_n,\; \delta_{g_i}\mapsto
	\left\{ \begin{array}{ll} \delta_{g_i} & 1\le i\le n\\ 0 & \text{otherwise} \end{array}\right..$$
Clearly $\phi_p : F^p_n \rightarrow \ell^p_n,\; \delta_{g_i} \mapsto e_i$ is a complete contraction,
and $Q^p_n$ is also a complete contraction. Recall the maps $\psi$ and $P^1_n$ in \eqref{E1} and \eqref{P1}, respectively.
By composing the above maps with $\rho$ we get the following completely bounded map with cb-norm independent of $n$: 
	$$\Phi : \ell^p_n \stackrel{\phi^{-1}}{\longrightarrow} \ell^p(G) \stackrel{\rho}{\longrightarrow} A(G) \prt \ell^p(G)
	\stackrel{(\psi \otimes \phi)\circ (P^1_n\otimes Q^p_n)}{\longrightarrow} (RC^1_n) \prt (\ell^p_n),$$
where $\Phi(e_i) = e_i \otimes (e_i + \sum_{1\le j\neq i\le n}\alpha_{i,j}e_j)$ for some constants $(\alpha_{i,j})_{1\le j\neq i\le n}$.
Note that
	$$RC^1_n \prt \ell^p_n \cong CB(\ell^p_n, RC^\infty_n)^*$$
completely isometrically, so that we have
	$$\norm{\Phi \Big(\sum^n_{i=1}e_i\Big)}_{CB(\ell^p_n, RC^\infty_n)^*}
	\ge \frac{\abs{\left\langle \Phi\Big(\sum^n_{i=1}e_i\Big), I_n \right\rangle}}{\norm{I_n : \ell^p_n \rightarrow RC^\infty_n}_{cb}}
	= n^{\frac{1}{2} + \frac{1}{2p}}$$
by Lemma \ref{lem-cb-norm-estimate} below.

On the other hand we have
	$$\norm{\Phi \Big(\sum^n_{i=1}e_i\Big)}_{CB(\ell^p_n, RC^\infty_n)^*}
	\le \norm{\Phi} \norm{\sum^n_{i=1}e_i}_{\ell^p_n} = \norm{\Phi}n^{\frac{1}{p}},$$
which is a contradiction when $n$ is large enough.

\end{proof}

	\begin{lem}\label{lem-cb-norm-estimate}
	Let $I_n :\ell^p_n \rightarrow RC^\infty_n$ be the formal identity for $1\le p \le \infty$. Then
		$$\norm{I_n}_{cb} \le n^{\frac{1}{2}-\frac{1}{2p}}.$$
	\end{lem}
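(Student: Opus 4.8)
The plan is to obtain the bound by complex interpolation between the endpoints $p=1$ and $p=\infty$, exploiting that the target $RC^\infty_n = R_n \cap C_n$ is the \emph{same} operator space at both ends. Recall that the operator space structure on $\ell^p_n$ is the interpolated one, $\ell^p_n = [\ell^\infty_n, \ell^1_n]_{\frac1p}$, where $\ell^\infty_n$ carries its (minimal) commutative $C^*$-structure and $\ell^1_n$ is its predual. Since the formal identity $I_n$ is the identity on the underlying space $\mathbb{C}^n$, it is a consistent map on the couple $(\ell^\infty_n, \ell^1_n)$, and the interpolation inequality for a single map into a fixed operator space gives
\[
\norm{I_n : \ell^p_n \to RC^\infty_n}_{cb} \le M_0^{1-\frac1p}\, M_1^{\frac1p},
\]
where $M_0 = \norm{I_n : \ell^\infty_n \to RC^\infty_n}_{cb}$ and $M_1 = \norm{I_n : \ell^1_n \to RC^\infty_n}_{cb}$.

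For the endpoint $p=\infty$ I would argue directly from the explicit norm formula for $R_n \cap C_n$. For $(A_i)_{i=1}^n \subseteq M_m$ one has $\norm{\sum_i A_i A^*_i} \le \sum_i \norm{A_i}^2 \le n \max_i \norm{A_i}^2$, and likewise for $\sum_i A^*_i A_i$; since $\max_i \norm{A_i}$ is exactly $\norm{\sum_i A_i \otimes e_i}_{M_m(\ell^\infty_n)}$, this yields $M_0 \le n^{\frac12}$.

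For the endpoint $p=1$ I would use that $\ell^1_n$, as the predual of the commutative algebra $\ell^\infty_n$, carries the maximal operator space structure. Hence any bounded map out of $\ell^1_n$ is automatically completely bounded with $\norm{\cdot}_{cb}$ equal to its ordinary norm, so that $M_1$ is just $\norm{I_n : \ell^1_n \to R_n \cap C_n}$ computed at the Banach space level. As $R_n \cap C_n$ is isometrically $\ell^2_n$, this equals $\norm{I_n : \ell^1_n \to \ell^2_n} = 1$. (Alternatively, by duality $M_1 = \norm{I_n : R_n + C_n \to \ell^\infty_n}_{cb}$, which is at most $1$ by the triangle inequality applied to the defining infimum formula for $R_n + C_n$ together with $\norm{B_i}\le\norm{\sum_j B_jB^*_j}^{\frac12}$ and $\norm{C_i}\le\norm{\sum_j C^*_jC_j}^{\frac12}$.)

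Combining the two endpoints, $M_0^{1-\frac1p} M_1^{\frac1p} = n^{\frac12\left(1 - \frac1p\right)} = n^{\frac12 - \frac1{2p}}$, as required. The estimates at each endpoint are elementary; the only point demanding care is the bookkeeping of the operator space structures — confirming that $\ell^p_n$ really is the operator interpolation space $[\ell^\infty_n, \ell^1_n]_{\frac1p}$ and that the single-map interpolation estimate applies with the fixed codomain $RC^\infty_n = [RC^\infty_n, RC^\infty_n]_{\frac1p}$ — so that the endpoint cb-norms may legitimately be interpolated.
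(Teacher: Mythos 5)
Your proof is correct and follows essentially the same route as the paper: the authors also compute the endpoint bounds $\norm{I_n:\ell^\infty_n\to RC^\infty_n}_{cb}\le n^{1/2}$ (via the explicit $\max$ formula for the $M_m(R_n\cap C_n)$ norm) and $\norm{I_n:\ell^1_n\to RC^\infty_n}_{cb}\le 1$ (via the maximal operator space structure on $\ell^1_n$), and then conclude by complex interpolation. Your additional remarks on the compatibility of the interpolation couples are a harmless elaboration of the same argument.
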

\begin{proof}
We will first check the extremal cases $p=1$ and $p=\infty$.
When $p=1$ we have
	$$\norm{I_n :\ell^1_n \rightarrow RC^\infty_n}_{cb} = \norm{I_n :\ell^1_n \rightarrow \ell^2_n} \le 1.$$
When $p=\infty$ we have for any $(A_i)^n_{i=1} \subseteq M_m$, $m\in \n$ that
	\begin{align*}
	\norm{\sum^n_{i=1} A_i \otimes e_i}_{M_m(RC^\infty_n)}
	& = \max \Bigg\{ \norm{\sum^n_{i=1} A^*_i A_i}^{\frac{1}{2}}_{M_m}, \norm{\sum^n_{i=1} A_i A^*_i}^{\frac{1}{2}}_{M_m}\Bigg\}\\
	& \le n^{\frac{1}{2}}\max_{1\le i\le n} \norm{A_i}_{M_m} = n^{\frac{1}{2}}\norm{\sum^n_{i=1} A_i \otimes e_i}_{M_m(\ell^\infty_n)},
	\end{align*}
which implies
	$$\norm{I_n :\ell^\infty_n \rightarrow RC^\infty_n}_{cb} \le n^{\frac{1}{2}}.$$
The final result follows by applying complex interpolation.
\end{proof}

\subsection{Operator Projectivity of $C^*(G)$}

Let $i : L^1(G) \hookrightarrow C^*(G)$ is the canonical embedding. Then $\{i(f) : f\in L^1(G)\}$ is dense in $C^*(G)$.
Now we can consider a natural $A(G)$-module structure on $C^*(G)$ obtained by the following complete contraction.
	$$\pi : A(G) \prt C^*(G) \rightarrow C^*(G),\; f \otimes i(g) \mapsto i(f g).$$
Indeed, $(C^*(G))^* = B(G)$ and $B(G)$ is a completely contractive $A(G)$-module under the pointwise multiplication,
then we can easily check that $\pi$ is the dual module structure.
It is clear that $C^*(G)$ is essential.
Then, by Theorem \ref{thm-essential} and a similar argument as in the proof of Theorem \ref{thm-notproj-Cr(G)}, we get the following result.

	\begin{thm}\label{thm-fullC*}
	Let $G$ be a locally compact group. If $G$ is discrete and amenable,
	then $C^*(G)$ is operator projective in $A(G)$-{\bf mod}.
	If $G$ is non-discrete, then $C^*(G)$ is not operator projective in $A(G)$-{\bf mod}.
	\end{thm}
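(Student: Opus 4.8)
The plan is to treat the two implications separately: the positive direction follows by a direct appeal to Theorem \ref{thm-essential}, while the negative direction is obtained by transplanting the proof of Theorem \ref{thm-notproj-Cr(G)} from $C^*_r(G)$ to $C^*(G)$.

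For the positive direction, suppose $G$ is discrete and amenable. Then $A(G)$ is operator biprojective (\cite{W02}) and, by Leptin's theorem (\cite{Lep}), has a bounded approximate identity. Since $C^*(G)$ is an essential $A(G)$-module (as noted just above the statement), Theorem \ref{thm-essential} applies verbatim and shows that $C^*(G)$ is operator projective in $A(G)$-\m. One may also note that for discrete amenable $G$ one has $C^*(G)=C^*_r(G)$, so this is a special case of Theorem \ref{thm-pos-Cr(G)}.

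For the negative direction, assume $G$ is non-discrete. Following Theorem \ref{thm-notproj-Cr(G)}, I would first use the structure theory (\cite[Proposition 12.2.2]{Pa01}) to choose an almost connected open subgroup $H\le G$; since $G$ is non-discrete so is $H$, and since $VN(H)$ is injective (\cite{Pat}) the algebra $A(H)$ has OAP. The argument then rests on two ingredients parallel to the $C^*_r$ case. The first is a transference lemma in the spirit of Lemma \ref{lem-opensubgp}: if $C^*(G)$ is operator projective in $A(G)$-\m, then $C^*(H)$ is operator projective in $A(H)$-\m. Here I would take the completely isometric injection $i\colon C^*(H)\to C^*(G)$ from \cite[Section 5]{BKLS} as the upward map, construct a completely contractive $A$-module projection $R_\infty\colon C^*(G)\to C^*(H)$ as its left inverse, and then run the diagram \eqref{diagram1} with $\rho_H=(R_1\otimes R_\infty)\circ\rho_G\circ i$. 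The second ingredient is the vanishing ${}_{A(H)}B(C^*(H),A(H))=0$, the exact analog of Proposition \ref{prop-trivial-mapping-space}: its proof should carry over word for word with $i(f)\in C^*(H)$ in place of $L_f$, since $\|i(f)\|\le\|f\|_1$ makes the $L^1$-density argument go through and the module identity reads $T(i(f1_K))=fg$; factoring through $L^{\infty}(H)$ forces $g\equiv 0$, and density of $i(C_c(H))$ then gives $T=0$. Combining the two: were $C^*(G)$ operator projective, transference would make $C^*(H)$ operator projective, whence Proposition \ref{prop-proj} (applicable because $A(H)$ has OAP) would produce, for a fixed nonzero $x$, a map $T\in{}_{A(H)}CB(C^*(H),A(H)_+)$ with $T(x)\neq 0$, and after multiplying by a suitable element of $A(H)$ a nonzero $T'\in{}_{A(H)}CB(C^*(H),A(H))$, contradicting the vanishing above.

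The hard part will be the transference step. In the $C^*_r$ case the maps $R_\infty$ and $j_\infty$ of Lemma \ref{lem-opensubgp} arise simply as the (pre)adjoints of $R_1$ and $j_1$, because $C^*_r(G)\subseteq VN(G)=A(G)^*$; this shortcut is unavailable here, since $C^*(G)^*=B(G)\neq A(G)^*$. I expect instead to build the projection $R_\infty\colon C^*(G)\to C^*(H)$ from the open-subgroup structure, for example from the idempotent $1_H$, which is a norm-one positive-definite function in $B(G)$ when $H$ is open, and the delicate point will be checking that the resulting map is weak$^*$-continuous, completely contractive, an $A(G)$-module map over the restriction $R_1$, and a left inverse of $i$, so that the diagram genuinely commutes and $\rho_H$ is an honest $A(H)$-module right inverse of $\pi_H$. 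Once $R_\infty$ is in hand, the remaining steps are routine adaptations of the $C^*_r$ arguments.
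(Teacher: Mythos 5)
Your proposal is correct and follows essentially the same route as the paper, which settles the positive direction by Theorem \ref{thm-essential} (essentiality of $C^*(G)$, operator biprojectivity of $A(G)$ for discrete $G$, and Leptin's theorem) and the negative direction by ``a similar argument as in the proof of Theorem \ref{thm-notproj-Cr(G)}''. In fact you supply more detail than the paper does on the one step that is not verbatim transferable --- the open-subgroup transference for the full $C^*$-algebra, where $R_\infty$ cannot be obtained by restricting $j_1^*$ since $C^*(G)^* = B(G)$ --- and your proposed construction of the projection $C^*(G)\to C^*(H)$ from the positive-definite idempotent $1_H$ together with the completely isometric embedding of \cite[Section 5]{BKLS} is the correct way to close that gap.
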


	\begin{rem}{\rm 
	Unfortunately, the argument for $\mathbb{F}_\infty$ break down in this case since the span of free generators is completely isomorphic to $\ell^1_n$
	and has complementation constant order of $\sqrt{n}$.
	}
	\end{rem}

\subsection{Operator Projectivity of $C^*_\delta(G)$}
We close this section with the case of $C^*_\delta(G)$, the $C^*$-algebra generated by $\{\lambda(s) : s\in G\} \subseteq B(L^2(G))$.
If $G$ is discrete, then $C^*_\delta(G) = C^*_r(G)$, so that the only relevant case is when $G$ is non-discrete. 

	\begin{prop}\label{prop-trivial-mapping-space-C*delta}
	Let $G$ be a non-discrete group. Then
		$${}_{A(G)}B(C^*_\delta(G), A(G)) = 0.$$
	\end{prop}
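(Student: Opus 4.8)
The plan is to follow the strategy of Proposition~\ref{prop-trivial-mapping-space}, but the argument for $C^*_\delta(G)$ turns out to be considerably more direct, since the natural generators $\lambda(s)$, $s\in G$, of $C^*_\delta(G)$ are simultaneous eigenvectors for the $A(G)$-module action. First I would record this action explicitly. Since $VN(G)=A(G)^*$ and $\la \lambda(s),g\ra = g(s)$ for $g\in A(G)$, the dual module structure gives, for any $f\in A(G)$,
\[
\la f\cdot \lambda(s),g\ra = \la \lambda(s), gf\ra = g(s)f(s) = f(s)\la \lambda(s),g\ra,
\]
so that $f\cdot \lambda(s)=f(s)\lambda(s)$. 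In particular each $\lambda(s)$ lies in $C^*_\delta(G)$ together with all of its $A(G)$-translates, which is what makes $C^*_\delta(G)$ an $A(G)$-submodule of $VN(G)$.

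Now fix $T\in {}_{A(G)}B(C^*_\delta(G),A(G))$ and set $u_s:=T(\lambda(s))\in A(G)$. Using that $T$ is a left module map and that the $A(G)$-action on $A(G)$ is just pointwise multiplication, the identity above yields
\[
f(s)\,u_s = f(s)\,T(\lambda(s)) = T\big(f\cdot\lambda(s)\big) = f\cdot T(\lambda(s)) = f\,u_s
\]
for every $f\in A(G)$. Evaluating at a point $x\in G$ gives $\big(f(x)-f(s)\big)u_s(x)=0$. Since $A(G)$ separates the points of $G$ (\cite{Eymard}), for each $x\neq s$ I can choose $f$ with $f(x)\neq f(s)$, forcing $u_s(x)=0$; hence $u_s$ vanishes off $\{s\}$. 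Because $G$ is non-discrete, $s$ is not isolated, so by continuity of $u_s\in A(G)\subseteq C_0(G)$ we also get $u_s(s)=0$. Therefore $u_s=0$, i.e. $T(\lambda(s))=0$ for every $s\in G$.

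Finally, since $\mathrm{span}\{\lambda(s):s\in G\}$ is dense in $C^*_\delta(G)$ and $T$ is bounded, it follows that $T=0$, which proves ${}_{A(G)}B(C^*_\delta(G),A(G))=0$. The only real content is the eigenvector identity $f\cdot\lambda(s)=f(s)\lambda(s)$ together with the two standard facts that $A(G)$ separates points and that points fail to be isolated in a non-discrete group; unlike the $C^*_r(G)$ case, no approximation of $L^1$-functions nor composition with $L^\infty(G)$ is needed, so I do not anticipate a serious obstacle.
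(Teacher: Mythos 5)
Your proof is correct and follows essentially the same route as the paper's: both rest on the eigenvector identity $f\cdot\lambda(s)=f(s)\lambda(s)$, conclude that $T(\lambda(s))$ vanishes off $\{s\}$ (you via point separation, the paper via functions in $A(G)$ supported near $s$), invoke non-isolation of points in a non-discrete group, and finish by density of $\mathrm{span}\{\lambda(s):s\in G\}$ in $C^*_\delta(G)$. The extra explicitness about $u_s(s)=0$ is a welcome clarification but not a different argument.
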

	\begin{proof}
We pick any $T \in {}_{A(G)}B(C^*_\delta(G), A(G))$ and $s\in G$. Then
	$$T(\lambda(s)) = T(f\cdot \lambda(s)) = f T(\lambda(s))$$
for any $f\in A(G)$ with $f(s) = 1$.
If we pick $f \in A(G)$ such that $f(s) = 1$ and $f$ is supported in an arbitrarily small neighborhood of $s$,
then we can observe that $T(\lambda(s))$ is zero in $G \backslash \{s\}$, and consequently $T(\lambda(s)) = 0$.
Then by density we get $T = 0$. 
\end{proof}

Then, by similar arguments as in the proof of Theorem \ref{thm-notproj-Cr(G)}, we get the following result.
Note that the transference argument can be obtained by a similar diagram to \eqref{diagram1}.

	\begin{thm}\label{thm-C*delta}
	Let $G$ be a locally compact group. If $G$ is non-discrete, then $C^*_\delta(G)$ is not operator projective in $A(G)$-{\bf mod}.
	\end{thm}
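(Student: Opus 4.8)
The plan is to follow verbatim the template of the proof of Theorem \ref{thm-notproj-Cr(G)}, replacing the reduced $C^*$-algebra by $C^*_\delta(G)$ and invoking Proposition \ref{prop-trivial-mapping-space-C*delta} in place of Proposition \ref{prop-trivial-mapping-space}. The only genuinely new work is to secure the two supporting ingredients of that template: a reduction to an open subgroup whose Fourier algebra has the OAP, and the transference of operator projectivity down to that subgroup.

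First I would invoke the structure theory (\cite[Proposition 12.2.2]{Pa01}) to choose an almost connected open subgroup $H$ of $G$; since $G$ is non-discrete and $H$ is open, $H$ is non-discrete as well. Because $H$ is almost connected, $VN(H)$ is injective (\cite{Pat}), and hence $A(H)$ has the OAP. This is precisely the setting in which Propositions \ref{prop-proj} and \ref{prop-trivial-mapping-space-C*delta} can be combined.

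The hard part will be the transference analogue of Lemma \ref{lem-opensubgp} for $C^*_\delta$, since the maps involved are not multiplicative. Let $j_1 : A(H) \hookrightarrow A(G)$ be the canonical complete isometry and $R_1 : A(G) \rightarrow A(H)$ the restriction, and set $R_\infty = j_1^*|_{C^*_\delta(G)}$ and $j_\infty = R_1^*|_{C^*_\delta(H)}$. The key observation that makes this work is that the family $\{\lambda(s)\}$ is already closed under products and adjoints, so $C^*_\delta(G) = \overline{\operatorname{span}}\{\lambda(s) : s\in G\}$; a direct computation then gives $j_1^*(\lambda_G(s)) = \lambda_H(s)$ for $s\in H$ and $0$ otherwise, while $R_1^*(\lambda_H(s)) = \lambda_G(s)$. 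Thus both maps carry generators to generators (or to $0$) and, being continuous and linear, respect the closed linear spans, so $R_\infty$ maps $C^*_\delta(G)$ into $C^*_\delta(H)$ and $j_\infty$ maps $C^*_\delta(H)$ into $C^*_\delta(G)$; moreover $R_\infty\circ j_\infty = \operatorname{id}_{C^*_\delta(H)}$, and since $f\cdot \lambda(s) = f(s)\lambda(s)$ the module actions intertwine the restrictions, so the analogue of the commuting diagram \eqref{diagram1} holds. Setting $\rho_H = (R_1\otimes R_\infty)\circ \rho_G\circ j_\infty$ then shows that operator projectivity of $C^*_\delta(G)$ in $A(G)$-\m\ descends to operator projectivity of $C^*_\delta(H)$ in $A(H)$-\m.

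Finally, assuming for contradiction that $C^*_\delta(G)$ is operator projective, the above yields that $C^*_\delta(H)$ is operator projective in $A(H)$-\m. Fix any nonzero $x\in C^*_\delta(H)$, say $x = \lambda_H(s)$. Since $A(H)$ has the OAP, Proposition \ref{prop-proj} produces a map $T\in {}_{A(H)}CB(C^*_\delta(H), A(H)_+)$ with $T(x)\neq 0$; multiplying by a function $u\in A(H)$ that is nonzero at a point where $T(x)\in A(H)_+\subseteq C_b(H)$ does not vanish gives $T' = u\cdot T\in {}_{A(H)}CB(C^*_\delta(H), A(H))$ with $T'(x)\neq 0$, using that $u\cdot A(H)_+\subseteq A(H)$ and that multiplication by $u$ commutes with the module action. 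This contradicts Proposition \ref{prop-trivial-mapping-space-C*delta} applied to the non-discrete group $H$, completing the proof.
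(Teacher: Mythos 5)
Your proposal is correct and follows exactly the route the paper intends: the paper gives no separate proof of this theorem, merely noting that it follows ``by similar arguments as in the proof of Theorem \ref{thm-notproj-Cr(G)}'' with the transference obtained from a diagram like \eqref{diagram1}, and your write-up supplies precisely those details (the open almost connected subgroup $H$ with $A(H)$ having OAP, the generator-to-generator computation showing $j_1^*$ and $R_1^*$ restrict correctly to $C^*_\delta$, and the combination of Propositions \ref{prop-proj} and \ref{prop-trivial-mapping-space-C*delta}). No gaps; this is the paper's argument made explicit.
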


\section{Open problems and Summary of the results}
We collect some open problems which we could not address:
\vspace{0.5cm}

Q1. Is there a locally compact group $G$ such that $\dim(B(G)/A(G))=1?$
\vspace{0.3cm}

Note that if the answer is yes, then it follows from Lemma \ref{L:co-dim-sigma-compact} that 
$G_e$ is compact. Thus $H=G / G_e$ have the same property. That is, by Theorem \ref{T:finite-co-dim},
there exists a non-compact, non-discrete, totally disconnected group $H$ such that 
$\dim(B(H)/A(H))=1$. Moreover, $H$ is minimally almost periodic.
\vspace{0.5cm}

Q2. Is $C^*(\mathbb{F}_2)$ operator projective in $A(\mathbb{F}_2)$-\m?
\vspace{0.5cm}

Q3. Is $L^\infty(G)$ operator projective in $A(G)$-{\bf mod} for an infinite non-discrete group $G$?
\vspace{0.5cm}

Q4. Is $L^p(VN(G))$ operator projective in $A(G)$-{\bf mod} for a non-discrete group $G$ and $1<p<2$?
\vspace{0.5cm}

We will end the paper with table that contains a summary of our results. The first column in the table denotes the module 
under consideration. The second column identifies those classes of groups for which we know 
definitively that the module under consideration is operator projective as a left $A(G)$-module.
The third column identifies those classes of groups for which we know 
definitively that the module under consideration is not operator projective as a left $A(G)$-module.
The fourth column summaries what we know in the specific case 
that $G$ is a discrete group containing $\mathbb{F}_2$.
\vspace{0.5cm}
  
\begin{table}
\caption{Summary of the results}
\begin{tabular}{|p{2cm}|p{2cm}|p{3cm}||p{3cm}|}

\hline $A(G)$-{\bf mod} & op. proj. & not op. proj. & $G$ is a discrete group containing $\mathbb{F}_2$ \\

\hline $A(G)$ & [IN] & $G = SL(3,\mathbb{R})$ & [DS] $\subseteq$ [IN]\\

\hline $C^*_r(G)$, $UCB(\widehat{G})$ & (1) & (2) & not op. proj. \\

\hline $L^p(VN(G))$ $(1<p<\infty)$ & (1) & (2) \;\;\; when \;\;\;\;$2\le p <\infty$ & not op. proj. \\

\hline $L^2(VN(G))_c$ & $G$ is discrete & (2) & op. proj. \\

\hline $VN(G)$ & $G$ is finite & (2) or(4) & not op. proj. \\

\hline $C^*(G)$ & (1) & (2) & ? \\

\hline $L^1(G)$ & $G$ is discrete & (2) & op. proj. \\

\hline $L^p(G)$ $\;\;\;(1<p<\infty)$ & (1) & (2) & not op. proj. \\

\hline $L^\infty(G)$ & $G$ is finite & $G$ is infinite and discrete & not op. proj. \\

\hline $C^*_\delta (G)$ & (1) & (2) & $C^*_\delta(G) = C^*_r(G)$ \\

\hline $B(G)$ & (3) & not (3) & not op. proj.\\

\hline $A(G)^{**}$, $UCB(\widehat{G})^*$ & $G$ is finite & (2) or (4) & not op. proj.\\ 

\hline \multicolumn{4}{l}{(1) : $G$ is discrete and amenable.}\\

\hline \multicolumn{4}{l}{(2) : $G$ is non-discrete.}\\

\hline \multicolumn{4}{l}{(3) : $G$ is compact or $\text{dim}(B(G)/A(G)) = 1$.}\\

\hline \multicolumn{4}{l}{(4) : $G$ contains an infinite amenable subgroup.}\\

\hline
\end{tabular}

\end{table}

\bibliographystyle{amsplain}

\begin{thebibliography}{1}

\bibitem{ABU} C. A. Akemann, B. Tanbay, and A. $\ddot {U}$lger, A note on the Kadison-Singer Problem, J. Operator Theory, (to appear).  

\bibitem{Arsac}
G. Arsac, Sur l'espace de Banach engendr$\acute{\text{e}}$ par les coefficients d'une repr$\acute{\text{e}}$sentation unitaire,
Publ. D$\acute{\text{e}}$p. Math (Lyon) \textbf{13} (1976), 1-101.

\bibitem{Ar02}
O. Y. Aristov, Biprojective algebras and operator spaces. Functional analysis, 8.  J. Math. Sci. (New York) \textbf{111} (2002),  no. 2, 3339-3386.

\bibitem{Ar05}
O. Y. Aristov, Fourier algebras of some connected groups are not projective. (Russian)  Uspekhi Mat. Nauk  \textbf{60} (2005),  no. 1(361), 159-160;  translation in Russian Math. Surveys \textbf{60} (2005), no. 1, 154-156.

\bibitem{BKLS} M. E. Bekka, E. Kaniuth, A. T. Lau, and G. Schlichting, Weak$^*$-closedness of subspaces
of Fourier-Stieltjes algebras and weak$^*$-continuity of the restriction map. Trans. Amer. Math. Soc.,
\textbf{350} (1998), no. 6, 2277-2296.

\bibitem{BelFor}
A. B$\acute{\text{e}}$langer and B. Forrest,
Goemetric properties of coefficient function spaces determined by unitary representations of a locally compact group.
J. Math. Anal. Appl., \textbf{193} (1995), 390-405.

\bibitem{Bla}
B. Blackadar, Operator algebras. Theory of $C\sp *$-algebras and von Neumann algebras. Encyclopaedia of Mathematical Sciences, 122.
Operator Algebras and Non-commutative Geometry, III. Springer-Verlag, Berlin, 2006. xx+517 pp.

\bibitem{BL76}
J. Bergh and J. L\"{o}fstr\"{o}m, Interpolation spaces. Springer-Verlag, Berlin, 1976.

\bibitem{D}
H. G. Dales, Banach algebras and automatic continuity, New York, Oxford University Press, 2000.

\bibitem{DD}
J. Delaporte and A. Derighetti, Invariant projections and convolution operators, Proc. Amer. Math. Soc. \textbf{129} (2001), no. 5, 1427-1435

\bibitem{Der}
A. Derighetti, Conditional expectations on $CV\sb p(G)$. Applications, J. Funct. Anal. \textbf{247} (2007), no. 1, 231-251.

\bibitem{DP04}
H. G. Dales and M. E. Polyakov, Homological properties of modules over group algebras.  Proc. London Math. Soc. (3) \textbf{89} (2004), no. 2, 390-426.

\bibitem{ER00}
E. G. Effros and Z. J. Ruan, Operator spaces. London Mathematical Society Monographs. New Series, 23.
The Clarendon Press, Oxford University Press, New York, 2000.

\bibitem{ES92}
M. Enock, J.-M. Schwartz, Kac algebras and duality of locally compact groups. Springer-Verlag, Berlin, 1992.

\bibitem{Eymard}
P. Eymard, L'alg\`{e}bre de Fourier d'un groupe localement
compact., Bull. Soc. Math. France \textbf{92 }(1964), 181-236.

\bibitem{FK86}
T. Fack and H. Kosaki, Generalized $s$-numbers of $\tau$-measurable operators. Pacific J. Math. \textbf{123} (1986), no. 2, 269-300.

\bibitem{ForRun}  B. Forrest and V. Runde, Amenability and weak amenability of the Fourier algebra., 
Math. Z., \textbf{250} (2005), no. 4, pp. 731-744.

\bibitem{ForWood}
B. Forrest and P. Wood, Cohomology and the operator space structure of the 
Fourier algebra and its second dual., Indiana Univ. Math. J. \textbf{50} (2001), no. 3, 1217-1240. 
 
\bibitem{Hel}
A. Ya. Helemskii, The homology of Banach and topological algebras. Translated from the Russian by Alan West.
Mathematics and its Applications (Soviet Series), 41. Kluwer Academic Publishers Group, Dordrecht, 1989. xx+334 pp.

\bibitem{Iz97}
H. Izumi, Constructions of non-commutative $L\sp p$-spaces with a complex parameter arising from modular actions. 
Internat. J. Math. \textbf{8} (1997), no. 8, 1029-1066.

\bibitem{Iz98}
H. Izumi, Natural bilinear forms, natural sesquilinear forms and the associated duality on non-commutative $L\sp p$-spaces.
Internat. J. Math. \textbf{9} (1998), no. 8, 975-1039.

\bibitem{John}  B. Johnson, Cohomology in Banach algebras, Mem. Amer. Math.
Soc., \textbf{127} (1972).

\bibitem{JR}
M. Junge and Z. J. Ruan, Approximation properties for noncommutative $L\sb p$-spaces associated with discrete groups.
Duke Math. J. \textbf{117} (2003),  no. 2, 313-341. 

\bibitem{JRX05}
M. Junge, Z. J. Ruan and Q. Xu, Rigid $\mathcal{OL}\sb p$ structures of non-commutative $L\sb p$-spaces associated with
hyperfinite von Neumann algebras. Math. Scand. \textbf{96} (2005), no. 1, 63-95. 

\bibitem{JX03}
M. Junge and Q. Xu, Noncommutative Burkholder/Rosenthal inequalities. Ann. Probab. \textbf{31} (2003), no. 2, 948-995.

\bibitem{KL} E. Kaniuth and  A. T. Lau,
A Separation Property of Positive Definite Functions on Locally Compact Groups and Applications to Fourier Algebras,
J. Funct. Anal., \textbf{175}, n.1 (2000), 89-110.

\bibitem{KR83}
R. V. Kadison and J. R. Ringrose, Fundamentals of the theory of operator algebras. Vol. I. Elementary theory. Pure and Applied Mathematics, 100. 
Academic Press, Inc. [Harcourt Brace Jovanovich, Publishers], New York, 1983.

\bibitem{KR86}
R. V. Kadison and J. R. Ringrose, Fundamentals of the theory of operator algebras. Vol. II. Advanced theory. Pure and Applied Mathematics, 100.
Academic Press, Inc., Orlando, FL, 1986.

\bibitem{Ko84a}
H. Kosaki, Applications of the complex interpolation method to a von Neumann algebra: noncommutative $L\sp{p}$-spaces.
J. Funct. Anal. \textbf{56} (1984), no. 1, 29-78.

\bibitem{Li}
B.-R. Li, Introduction to Operator Algebras, World Scientific, 1992.

\bibitem{LL}
A. T. Lau and V. Losert,
Weak*-closed complemented invariant subspaces of $L^\infty(G)$ and amenable locally compact groups, Pacific J. Math. \textbf{123} (1986), 149-159.

\bibitem{LL90}
A. T. Lau and V. Losert,
Complementation of certain subspaces of $L\sb \infty(G)$ of a locally compact group.  Pacific J. Math. \textbf{141} (1990),  no. 2, 295-310.

\bibitem{Lep}  H. Leptin, Sur l'alg\`{e}bre de Fourier d'un groupe
localement compact., C.R. Acad. Sci. Paris S\'{e}r A \textbf{266 }(1968),%
1180-1182.

\bibitem{Pat}
A. L. T. Paterson, The class of locally compact groups $G$ for which $C\sp *(G)$ is amenable.
Harmonic analysis (Luxembourg, 1987),  226-237, Lecture Notes in Math., 1359, Springer, Berlin, 1988.

\bibitem{Pa01}
T. W. Palmer, Banach algebras and the general theory of $*$-algebras. Vol. 2. $*$-algebras.
Encyclopedia of Mathematics and its Applications, 79. Cambridge University Press, Cambridge, 2001.

\bibitem{P96}
G. Pisier, The operator Hilbert space ${\rm OH}$, complex interpolation and tensor norms. Mem. Amer. Math. Soc. \textbf{122} (1996), no. 585.

\bibitem{P98}
G. Pisier, Non-commutative vector valued $L_p$-spaces and completely $p$-summing maps. Ast\'{e}risque(Soc. Math. France)
\textbf{247} (1998), 1-111.

\bibitem{P03}
G. Pisier, Introduction to operator space theory. London Mathematical Society Lecture Note Series, 294. Cambridge
University Press, Cambridge, 2003.

\bibitem{Ru}  Z.-J. Ruan, The operator amenability of A(G), Amer. J.
Math., \textbf{117} (1995), 1449-1476.

\bibitem{RX97}
Z.-J. Ruan and G. Xu, Splitting properties of operator bimodules and operator amenability of Kac algebras. 
Operator theory, operator algebras and related topics (Timi\c soara, 1996), 193-216, Theta Found., Bucharest, 1997.

\bibitem{So62}
A. Sobczyk, Extension properties of Banach spaces. Bull. Amer. Math. Soc. \textbf{68} (1962) 217-224.

\bibitem{Stei}
H. Steiniger, Finite dimensional-extensions of Fourier algebras, preprint (1997).

\bibitem{Ta79}
M. Takesaki, Theory of operator algebras. I. Springer-Verlag, New York-Heidelberg, 1979.

\bibitem{Ta03}
M. Takesaki, Theory of operator algebras. II. Encyclopaedia of Mathematical Sciences, 125.
Operator Algebras and Non-commutative Geometry, 6. Springer-Verlag, Berlin, 2003.

\bibitem{Taylor} K. Taylor, Geometry of the Fourier algebras and locally compact groups with atomic unitary representations.
Math. Ann. \textbf{262} (1983), 183-190.

\bibitem{Te81}
M. Terp, $L^p$ spaces associated with von Neumann algebras, Notes, Math. Institute, Copenhagen Univ. 1981.

\bibitem{W02}
P. J. Wood, The operator biprojectivity of the Fourier algebra. Canad. J. Math. \textbf{54} (2002), no. 5, 1100-1120.

\bibitem{Z}
E. I. Zelmanov, On periodic compact groups, Israel J. Math. \textbf{77} (1992) 83-95.


\end{thebibliography}
\providecommand{\bysame}{\leavevmode\hbox to3em{\hrulefill}\thinspace}

\end{document}